\newcommand{\numpartsgtrn}{p_{\geq n}}
\newcommand{\parts}{\mathcal{P}}
\newcommand{\asscovercoroot}{\alpha_{vw}^\vee}
\newcommand{\coroot}[1]{\alpha_{#1}^\vee}
\newcommand{\Supp}{\mathrm{Supp}}
\newcommand{\supp}{\mathrm{supp}}
\newcommand{\Z}{\mathcal{Z}}
\newcommand{\Iaf}{I_{\mathrm{af}}}
\newcommand{\Waf}{W_\textrm{af}}
\newcommand{\ASSFB}[1]{\tilde{F}^{B_n}_{#1}}
\newcommand{\ASSFD}[1]{\tilde{F}^{D_n}_{#1}}
\newcommand{\specialgen}[1]{\mathbb{P}_{#1}}
\newcommand{\stat}{cc}
\newcommand{\genericstat}{\textrm{stat}}
\newcommand{\ASSF}[1]{\tilde{F}_{#1}}
\newcommand{\specialgenB}[1]{\mathbb{P}_{#1}^B}
\newcommand{\specialgenD}[1]{\mathbb{P}_{#1}^D}
\newcommand{\repkerB}{\Omega_{-1}^{B_n}}
\newcommand{\repkerBalg}{\Omega_{-1}^B}
\newcommand{\repkerD}{\Omega_{-1}^{D_n}}
\newcommand{\repkerDalg}{\Omega_{-1}^D}
\newcommand{\Gr}{\textrm{Gr}_G}
\newcommand{\GrB}{\textrm{Gr}_B}
\newcommand{\GrD}{\textrm{Gr}_D}
\newcommand{\coweight}[1]{\Lambda^\vee_{#1}}
\newcommand{\fincoweight}[1]{\omega^\vee_{#1}}
\newcommand{\funalc}{A_0}
\newcommand{\nilCox}{\mathbb{A}_0}
\newcommand{\ZD}{\mathcal{Z}^D}
\newcommand{\ZB}{\mathcal{Z}^B}
\newcommand{\WD}{\widetilde{D}_n}
\newcommand{\WB}{\widetilde{B}_n}
\newcommand{\Dstat}[1]{cc(#1)}
\newcommand{\Bgrass}{\widetilde{B}_n^0}
\newcommand{\Dgrass}{\widetilde{D}_n^0}
\newcommand{\Wgrass}{\widetilde{W}^0}
\newcommand{\B}{\mathbb{B}}
\newcommand{\PhiD}{\mathbf{\Phi}_D}
\newcommand{\PhiB}{\mathbf{\Phi}_B}
\newcommand{\PsiB}{\mathbf{\Psi}_B}
\newcommand{\Dpar}{\mathcal{P}_D^n}
\newcommand{\Dparonecolor}{(\mathcal{P}_D^{n})^{(1)}}
\newcommand{\Bpar}{\mathcal{P}_B^n}
\newcommand{\Bodd}[1]{\mathcal{P}_{odd}^{#1}}
\newcommand{\Aaf}{A_{\textrm{af}}}
\newcommand{\Piaf}{\Pi_{\textrm{af}}}
\newcommand{\Paf}{\Phat}
\newcommand{\Qaf}{\Qhat}
\newcommand{\g}{\mathfrak{g}}
\newcommand{\h}{\mathfrak{h}}
\newcommand{\cohomB}{\Gamma^{(n)}_B}
\newcommand{\homB}{\Gamma_{(n)}^B}
\newcommand{\cohomD}{\Gamma^{(n)}_D}
\newcommand{\homD}{\Gamma_{(n)}^D}
\newcommand{\Spin}{\textrm{Spin}}
\newcommand{\kSB}[1]{\tilde{G}^{B_n}_{#1}}
\renewcommand{\g}{\mathfrak{g}}
\newcommand{\gaf}{\mathfrak{g}_{\textrm{af}}}
\renewcommand{\h}{\mathfrak{h}}
\newcommand{\haf}{\mathfrak{h}_{\textrm{af}}}
\newcommand{\Qhat}{\hat{Q}}
\newcommand{\Phat}{\hat{P}}
\newcommand{\Wafext}{W_{\textrm{ext}}}
\newcommand{\barthree}{\bar{3}}
\newcommand{\bartwo}{\bar{2}}
\newtheorem{theorem}{Theorem}
\newtheorem{proposition}[theorem]{Proposition}
\newtheorem{lemma}[theorem]{Lemma}
\newtheorem{corollary}[theorem]{Corollary}
\newtheorem{conj}[theorem]{Conjecture}
\theoremstyle{definition}
\newtheorem{definition}[theorem]{Definition}
\newtheorem{remark}{Remark}
\newtheorem{example}[remark]{Example}
\numberwithin{theorem}{section}
\numberwithin{remark}{section}
\numberwithin{equation}{section}
\begin{document}
\title{Affine Stanley symmetric functions for classical types}

\author[S.~Pon]{Steven Pon}
\email{steven.pon@uconn.edu}
\urladdr{http://www.math.uconn.edu/~pon}
\address{Department of Mathematics \\ University of Connecticut\\ 196 Auditorium
Road Unit 3009, Storrs, CT 06269-3009, U.S.A.}

\begin{abstract} We introduce affine Stanley symmetric functions for the special orthogonal groups, a class of symmetric functions that model the cohomology of the affine Grassmannian, continuing the work of Lam and Lam, Schilling, and Shimozono on the special linear and symplectic groups, respectively.  For the odd orthogonal groups, a Hopf-algebra isomorphism is given, identifying (co)homology Schubert classes with symmetric functions.  For the even orthogonal groups, we conjecture an approximate model of (co)homology via symmetric functions.  In the process, we develop type $B$ and type $D$ non-commutative $k$-Schur functions as elements of the nilCoxeter algebra that model homology of the affine Grassmannian.  Additionally, Pieri rules for multiplication by special Schubert classes in homology are given in both cases.  Finally, we present a type-free interpretation of \emph{Pieri factors}, used in the definition of noncommutative $k$-Schur functions or affine Stanley symmetric functions for any classical type.

\end{abstract}

\maketitle

\section{Introduction}

\subsection{Stanley symmetric functions and Schubert polynomials}

In 1984, Stanley introduced \cite{stan84} what came to be known as the Stanley symmetric functions as a tool for studying the number of reduced words of the longest element of the symmetric group.  Stanley's symmetric functions were soon found to have a deep relation to the geometry of the flag manifold as the ``stable limit'' of the Schubert polynomials of Lascoux and Schutzenberger \cite{ls82-1, mac91, bjs93}.

A particularly fruitful point of view for analysis of Stanley symmetric functions was found in the nilCoxeter algebra by Fomin and Stanley \cite{fs94}.  Billey and Haiman \cite{bh94} later explored analogues of Schubert polynomials for all the classical types; that is, polynomial representatives for Schubert classes in the cohomology ring of $G/B$, where $G = SO(n, \mathbb{C})$ or $Sp(2n,\mathbb{C})$ and $B$ is a Borel subgroup.  They also studied analogues of Stanley symmetric functions that are stabilizations of type $B$ (resp. type $D$) Schubert polynomials.  (Independently, Fomin and Kirillov \cite{fk96} explored several different type $B$ analogues of Schubert polynomials by generalizing different geometric and combinatorial properties of the type $A$ polynomials and also derived type $B$ Stanley symmetric functions, defined in terms of the nilCoxeter algebra of the hyperoctahedral group, whose definition matches that of Billey and Haiman.)  T.K. Lam \cite{tklam96, tklam95} developed much of the combinatorics of types $B$ and $D$ Stanley symmetric functions using Kra\'skiewicz insertion, including proofs that both expand as non-negative integer combinations of Schur $P$-functions.

\subsection{The affine case}
More recently, Thomas Lam \cite{lam05} defined (type $A$) \emph{affine} Stanley symmetric functions, which he labeled as such because 1) they contain Stanley symmetric functions as a special case, 2) they share several analogous combinatorial properties and 3) they and their duals were conjecturally related by Jennifer Morse and Mark Shimozono to the geometry of the affine Grassmannian and ``affine Schubert polynomials,'' in a manner analogous to the relation of Schubert polynomials to the cohomology of the flag variety.  In \cite{lam08}, Lam indeed showed a geometric interpretation of affine Stanley symmetric functions as representing Schubert classes of the cohomology of the affine Grassmannian of $SL(n,\mathbb{C})$.  The dual homology representatives are $t=1$ specializations of the $k$-Schur functions of Lascoux, Lapointe and Morse \cite{llm03}, which implies a relationship between affine Stanley symmetric functions and Macdonald polynomials.

Given the geometric interpretation of affine Stanley symmetric functions, a natural question to ask is if there are symmetric polynomial representatives for the Schubert classes of the (co)homology of the affine Grassmannian corresponding to any Lie type.  In \cite{bott58}, Bott described the (co)homology of affine Grassmannian for all the classical types, but his descriptions lacked concrete realization.  In \cite{lss10}, Lam, Schilling and Shimozono found symmetric function representatives for the affine Grassmannian of the symplectic group, the type $C$ affine Stanley symmetric functions.  More recently, Lam \cite{lam09} explained the thesis that ``every affine Schubert class is a Schur-positive symmetric function.''  That is, given simple and simply-connected complex algebraic groups $G \subset G'$ with an inclusion $\iota: G \to G'$, there is a closed embedding of affine Grassmannians $\textrm{Gr}_G \to \textrm{Gr}_{G'}$ and the pushforward of a Schubert class of $H_*(\textrm{Gr}_G)$ is a nonnegative linear combination of Schubert classes in $H_*(\textrm{Gr}_{G'})$.  In the limit, $H_*(\textrm{Gr}_{SL(\infty, \mathbb{C})}) \cong \Lambda$, where $\Lambda$ is the Hopf algebra of symmetric functions, and the Schubert basis is represented by Schur functions.

Therefore, one could expect an interpretation of Schubert classes of the affine Grassmannian of any Lie type as (Schur-positive) symmetric functions.  However, it is not always possible to find an injective map $H_*(\Gr) \to \Lambda$ (for example, if $G = SO(2n)$), so allowances must be made, and the quotation marks above must remain.

\subsection{Current results}
In the following, we generalize the methods of \cite{lss10} to first identify the homology Schubert basis of the affine Grassmannian of type $B$ or $D$ with a subalgebra of the nilCoxeter algebra known as the \emph{affine Fomin-Stanley subalgebra}, where the Schubert basis is represented by noncommutative $k$-Schur functions.  Using a noncommutative ``Cauchy-type'' kernel, we produce symmetric functions that model the cohomology of the affine Grassmannian of the odd special orthogonal groups, \emph{type $B$ affine Schur functions} (which are generalized by type $B$ affine Stanley symmetric functions).  Additionally, we prove positivity statements for the dual functions, \emph{type $B$ $k$-Schur functions}.  In the type $D$ case, there is no embedding of (co)homology into symmetric functions, but we present candidate symmetric functions that are conjectured to approximate the (co)homology rings.  

The definition of noncommutative $k$-Schur functions or affine Stanley symmetric functions in any type depends in part on a subset of the Weyl group that we call the set of \emph{Pieri factors}; we prove a description of the set of Pieri factors that is ``type-free'' in that it works for all classical types.  Affine Stanley symmetric functions for all classical types have been programmed into the math software package Sage \cite{sage,sagecom}; the appendix contains some data for small rank cases.

\subsection{Acknowledgements} The author would like to thank Anne Schilling for comments and many helpful discussions, Thomas Lam and Mark Shimozono for fruitful conversations, and Nicolas Thi\'ery for sharing his expertise with Sage.

\section{Main Results}
\label{section: main results}

Let $G$ be a simple and simply-connected complex algebraic group. Given such a group, we can associate a Cartan datum $(I,A)$ and Weyl group $W$ (see, for example, \cite{kum02}).  Let $K$ be a maximal compact subgroup of $G$, and let $T$ be a maximal torus in $K$.

Let $\mathbb{F} = \mathbb{C}((t))$ and $\mathbb{O} = \mathbb{C}[[t]]$.  The \emph{affine Grassmannian} may be given by $\Gr := G(\mathbb{F})/G(\mathbb{O})$.  $\Gr$ can be decomposed into \emph{Schubert cells} $\Omega_w = \mathcal{B}wG(\mathbb{O}) \subset G(\mathbb{F})/G(\mathbb{O})$, where $\mathcal{B}$ denotes the Iwahori subgroup and $w \in \Wgrass$, the set of Grassmannian elements in the associated affine Weyl group.  The Schubert varieties, denoted $X_w$, are the closures of $\Omega_w$, and we have $\Gr = \sqcup \Omega_w = \cup X_w$, for $w \in \Wgrass$.  The homology $H_*(\Gr)$ and cohomology $H^*(\Gr)$ of the affine Grassmannian have corresponding Schubert bases, $\{ \xi_w\}$ and $\{\xi^w\}$, respectively, also indexed by Grassmannian elements.  It is well-known that $\Gr$ is homotopy-equivalent to the space $\Omega K$ of based loops in $K$ (due to Quillen, see \cite[\textsection 8]{ps86} or \cite{mit88}).  The group structure of $\Omega K$ gives $H_*(\Gr)$ and $H^*(\Gr)$ the structure of dual Hopf algebras over $\mathbb{Z}$.

We study the Lie types $B$ and $D$ cases.  The complex special orthogonal groups $G = SO(n,\mathbb{C})$ are not simply-connected, but we may consider $G = \textrm{Spin}(n,\mathbb{C})$, and $K = \textrm{Spin}(n)$ (note also that the loop space $\Omega \textrm{Spin}(n) \cong \Omega_0 SO(n)$, the connected component of the identity).  The corresponding Weyl groups will be denoted $\WB$ (for $\Spin(2n+1)$) and $\WD$ (for $\Spin(2n)$).  We will also denote the affine Grassmannian of each type by $\GrB$ and $\GrD$, respectively.

\subsection{Type free results and definitions}  
\label{subsection: type free results}

Given $\Waf$, an affine Weyl group of classical type, let $\fincoweight{1},\ldots, \fincoweight{n}$ be the corresponding finite fundamental coweights.  We may identify elements of $\Waf$ with the set of alcoves in the weight space of the associated finite Lie algebra.  Let $\mathcal{O}$ be the orbit of $\nu(\fincoweight{1})$ under the usual action of the finite Weyl group, where $\nu$ is the usual map from the Cartan subalgebra to its dual.  We then define the set of \emph{Pieri factors} to be the Bruhat order ideal of $\Waf$ generated by the alcoves corresponding to translations of the identity alcove by elements of $\mathcal{O}$.  These Pieri factors will lead to the definition of affine Stanley symmetric functions in each type, and we denote them by $\Z$ (in order to specify type, we will use the notation $\ZB$, $\ZD$, etc.).  Furthermore, let the length $i$ elements of $\Z$ (resp. $\ZB, \ZD$) be denoted by $\Z_i$ (resp. $\ZB_i, \ZD_i$).

The type-free Pieri factors described above match with the corresponding set of affine Weyl group elements given in type $A$ (\cite[Definition 6.2]{lam08}), type $C$ (\cite[\textsection 1.5]{lss10}), and types $B$ and $D$ below (Definitions \ref{def: type B pieri factors} and \ref{def: type D pieri factors}).  See Proposition \ref{prop: type-free pieri factors} for a proof of this fact.

We will also need the following definitions.  Define `` $\prec$'' on $\Iaf$ in type $B$ by $0,1 \prec 2 \prec 3 \prec \cdots \prec n$, i.e., 0 and 1 are incomparable.  Similarly, define ``$\prec$'' on $\Iaf$ in type $D$ by $0,1, \prec 2 \prec 3 \prec \cdots \prec n-2 \prec n-1,n$.  In each case, this is the ordering suggested by the Dynkin diagram of affine type $B$ or $D$, respectively.  Define an \emph{interval} $[m,M]$ to be the set $\{ j \in \Iaf : j \nprec m \textrm{ and } j \nsucc M \} $.  Note that this implies that any interval either includes both 0 and 1, or includes neither (and in type $D$, any interval includes both $n-1$ and $n$, or neither).

Given an element $w \in \Z$, define the \emph{pre-support} of $w$, $\mathrm{supp}(w)$, to be the subset of $\Iaf$ consisting of the indices that appear in a reduced word for $w$.  Define the \emph{support} of $w$, $\Supp(w)$, to be the smallest union of intervals containing $\mathrm{supp}(w)$.  By the Coxeter relations for $\Waf$, these are independent of choice of reduced word and therefore well-defined.

The \emph{complement} of $\Supp(w)$ is $\Iaf \setminus \Supp(w)$.  When the complement of $\Supp(w)$ is written as a minimal number of disjoint intervals, we say those intervals are the \emph{components} of the complement of $\Supp(w)$.   Let $c(w) = $ the number of components of the support of $w$, and let $\stat(w) = $ the number of components of the complement of the support of $w$.

\begin{example} 
In type $B$, suppose $n = 7$ and $w = 3621 \in \ZB$.  Then $\mathrm{supp}(w) = \{ 1, 2, 3, 6 \}$, $\Supp(w) = [0,3] \cup \{ 6 \}$ and the complement of $\Supp(w)$ is $[4,5] \cup \{7 \}$ so $\stat(w) = 2$.
\end{example}

Let $\ell(w)$ be the length function on Weyl group elements.
 
\begin{definition} We define affine Stanley symmetric functions for any type by
\label{def: affine Stanley symmetric functions}
$$
\ASSF{w}[y] = \sum_{(v^1, v^2, \ldots)} \prod_i 2^{\genericstat(v^i)-1} y_i^{\ell(v_i)}
$$
where the sum runs over the factorizations $v^1v^2 \cdots = w$ of $w$ such that $v^i \in \Z$ and $\ell(v^1) + \ell(v^2) + \cdots = \ell(w)$, and $\genericstat$ is a statistic on Pieri factors that is type-specific.  For type $A$, $\genericstat(w) = 1$ for all $w \in \Z^A$.  For type $C$, $\genericstat(w) = c(w)$, and for types $B$ and $D$, $\genericstat(w) = \stat(w)$.
\end{definition}

We note that this definition of affine Stanley symmetric functions matches with those of \cite{lam05, lam08, lam09, lss10}.  Affine Stanley symmetric functions for specific types will be denoted by a superscript (e.g., $\ASSFB{w}$).

\subsection{Type $B$ main results}

In terms of reduced words, the type $B$ Pieri factors are given below.  Definition \ref{def: type B pieri factors} is used to prove the type-free Pieri factor formulation above.

\begin{definition}
\label{def: type B pieri factors}
The type $B$ Pieri factors are generated by the length-maximal elements with reduced words $s_0 s_2 \cdots s_n \cdots s_2 s_0$, $s_1 s_2 \cdots s_n \cdots s_2 s_1$, $s_2 s_3 \cdots s_n \cdots s_2 s_1 s_0$, and all cyclic rotations of the latter such that $s_0$ and $s_1$ remain adjacent (for example, the element with reduced word  $s_1 s_2 \cdots s_n \cdots s_2 s_0$ is \emph{not} a generator).  By Proposition \ref{prop: type-free pieri factors}, this matches with the above type-free definition of Pieri factors.
\end{definition}

Let $\Lambda$ be the ring of symmetric functions over $\mathbb{Q}$, and let $P_i$ and $Q_i$ denote the Schur $P-$ and $Q-$functions with a single part.  Let $\Gamma_* = \mathbb{Q}[Q_1,Q_2, \ldots]$, and let $\Gamma^* = \mathbb{Q}[P_1,P_2, \ldots]$; then $\Gamma_*$ and $\Gamma^*$ are dual Hopf algebras under the pairing $[\cdot, \cdot]$ given in \cite{mac95} (in fact, $\Gamma^* = \Gamma_*$, but it will be convenient to distinguish them as we begin considering $\mathbb{Z}$-algebras).  Let $\homB = \mathbb{Z}[Q_1,Q_2, \ldots, Q_{n-1},2Q_n, \ldots, 2Q_{2n-1}] \subset \Gamma_*$ be a Hopf algebra over $\mathbb{Z}$, and let $\cohomB$ be the dual quotient $\mathbb{Z}$-Hopf algebra embedded in $\Gamma^*$.

The finite Weyl group $B_n$ sits inside $\WB$ as the group generated by simple reflections $s_1, \ldots, s_n$.  We fix a set of minimal-length coset representatives of $\WB/B_n$, which we refer to as \emph{Grassmannian} (or \emph{0-Grassmannian}) elements, and denote them by $\Bgrass$.

\begin{theorem}
\label{theorem: ASSF form positive basis}
The functions $\ASSFB{w}, w \in \Bgrass$ form a basis of $\cohomB$ such that all product and coproduct structure constants are positive, and all $\ASSFB{w}$ with $w \in \WB$ are positive in this basis.
\end{theorem}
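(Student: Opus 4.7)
The plan is to adapt the type-$C$ strategy of Lam, Schilling, and Shimozono \cite{lss10} to type $B$, using Bott's description of $H_*(\GrB)$ along with the type $B$ Pieri factors $\ZB$ and the statistic $\stat$ that are specific to this case.

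The first step is to identify, via Bott together with Quillen's $\GrB \simeq \Omega\Spin(2n+1)$, the Hopf algebra $H_*(\GrB)$ with $\homB$ and $H^*(\GrB)$ with $\cohomB$. In parallel, I would construct the affine Fomin--Stanley subalgebra $\B \subseteq \nilCox$ of $\WB$ generated by suitable cyclically symmetric expressions in the Pieri-factor elements of Definition \ref{def: type B pieri factors}. The key algebraic input is that $\B$ is commutative and isomorphic as a graded Hopf algebra to $\homB$; the noncommutative $k$-Schur functions $\kSB{w}$ for $w \in \Bgrass$ are defined as the preimages of the Schubert classes $\xi_w$ and therefore form a $\mathbb{Z}$-basis of $\B$ with non-negative integer Schubert structure constants.

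The second step introduces the Cauchy-type kernel
\[
\Omega \;=\; \sum_{w \in \WB} \ASSFB{w}[y]\, w \;\in\; \Lambda[[y]] \,\widehat{\otimes}\, \nilCox ,
\]
which can be written as an ordered product of Pieri-factor generating series in the $y$-variables; expanding that product and reading off the coefficient of $w$ recovers Definition \ref{def: affine Stanley symmetric functions} on the nose (this is where the weight $2^{\stat(v^i)-1}$ appears, forced by the coefficients $2$ on $Q_n, \dots, Q_{2n-1}$ in the definition of $\homB$). Pairing $\Omega$ against the basis $\{\kSB{w}\}_{w \in \Bgrass}$ of $\B$ then identifies $\{\ASSFB{w}\}_{w \in \Bgrass}$ as the basis of $\cohomB$ dual to $\{\kSB{w}\}$, giving the basis assertion of the theorem.

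For positivity of the product and coproduct structure constants I would transport the Schubert positivity on $H^*(\GrB)$ and $H_*(\GrB)$ (Kumar--Nowak, Graham) through the Hopf-algebra isomorphism of Step~1. For the final claim, that $\ASSFB{w}$ expands positively in the Grassmannian basis for any $w \in \WB$, I would follow Lam \cite{lam08}: project $w \in \nilCox$ onto $\B$ via the natural $\mathbb{Z}$-linear retraction, write the result as $\sum_{v \in \Bgrass} j^w_v \kSB{v}$ with $j^w_v \in \mathbb{Z}_{\geq 0}$ (a consequence of the Hopf structure on $\B$ together with the Schubert-basis positivity from Step~1), and pair with $\Omega$ to obtain $\ASSFB{w} = \sum_v j^w_v \ASSFB{v}$. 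The hard part of the whole argument is Step~1: producing explicit commuting Pieri-factor elements in $\nilCox$ whose span is closed under multiplication and realizes the correct $\mathbb{Z}$-integral Hopf algebra $\homB$ --- in particular matching the factors of $2$ on $Q_n, \dots, Q_{2n-1}$ and reproducing, through Coxeter commutation relations, the Pieri rule for Schubert classes of $\GrB$; once that is in hand the rest of the argument is essentially formal.
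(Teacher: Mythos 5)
Your proposal follows essentially the same route as the paper: the heavy lifting is done by the Hopf-algebra isomorphism $\Psi : H^*(\GrB) \to \cohomB$ (Theorem \ref{theorem: type B cohomology isomorphism}), established via Bott/Quillen, the affine Fomin--Stanley subalgebra, and the Cauchy kernel $\repkerBalg$, after which the basis claim and positivity statements are transported from geometry exactly as you describe. Two minor points of precision: $\B$ is \emph{defined} as the centralizer $\{a \in \nilCox \mid \phi_0(as) = \phi_0(s)a\}$ and only subsequently shown to be generated by the Pieri elements $\specialgenB{i}$ (so it is not constructed from them), and the homology product constants' nonnegativity is due to Peterson and Lam--Shimozono rather than a Graham/Kumar-style argument (the latter covers the cohomology side).
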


Let the Grassmannian elements $\rho_i \in \WB$ be given by:
\begin{equation}
\label{eq: type B rho}
\rho_i = \begin{cases}
s_0 & i =1\\ 
s_i \cdots s_3 s_2 s_0 & 2 \leq i \leq n \\
s_{2n-i} s_{2n-i+1} \cdots s_{n-1} s_n s_{n-1} \cdots s_2 s_0 & n \leq i \leq 2n-2 \\
s_0 s_2 s_3 \cdots s_{n-1} s_n s_{n-1} \cdots s_3 s_2 s_0 & i = 2n-1. 
\end{cases}
\end{equation}

\begin{theorem}
\label{theorem: type B cohomology isomorphism}
There are dual Hopf algebra isomorphisms 
$$
 \Phi: \homB \to H_*(\GrB) \qquad \textrm{ and } \qquad  \Psi : H^*(\GrB) \to \cohomB
 $$ 
 such that $\Phi(2^{\chi(i \geq n)} Q_i) = \xi_{\rho_i}$ for $1 \leq i \leq 2n-1$, and $\Psi(\xi^w) = \ASSFB{w}$ for $w \in \Bgrass$.  
\end{theorem}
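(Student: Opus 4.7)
The plan is to follow the template of \cite{lss10}, realizing $H_*(\GrB)$ as the affine Fomin--Stanley subalgebra $\B^B \subset \nilCox$ of the nilCoxeter algebra of $\WB$. Peterson's theorem provides an isomorphism under which $\xi_w$ corresponds to a distinguished element $j_w^B$, and the $\{j_w^B\}_{w \in \Bgrass}$ are the noncommutative $k$-Schur functions forming a basis of $\B^B$. The homology map $\Phi$ will be built on the algebraic side as $\homB \to \B^B$, and $\Psi$ will emerge by dualizing against a noncommutative Cauchy-type kernel built from the Pieri factors $\ZB$.

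First I would define elements $h_i^B \in \B^B$ as weighted sums over the Pieri factors of length $i$, with each $v \in \ZB_i$ weighted by $2^{\stat(v)-1}$, in analogy with \cite[\textsection 3]{lss10}. The key computation, and the main obstacle, is a \emph{noncommutative Pieri rule} describing the product $h_i^B \cdot j_w^B$ on the basis $\{j_w^B\}$ that matches term-for-term the classical Pieri rule for multiplying by $Q_i$ on Schur $Q$-functions (rescaled by $2^{\chi(i \geq n)}$, to account for the fact that $Q_j = 2 P_j$ while only the generators with $i \geq n$ of $\homB$ carry an explicit factor of $2$). From this rule it follows simultaneously that the $h_i^B$ commute, satisfy the Schur $Q$-function relations, and that $h_i^B$ equals $j_{\rho_i}^B$, i.e.\ corresponds to $\xi_{\rho_i}$ under Peterson's map. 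Hence the assignment $\Phi(2^{\chi(i\geq n)} Q_i) := \xi_{\rho_i}$ extends to a well-defined algebra homomorphism $\homB \to H_*(\GrB)$.

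Next I would show $\Phi$ is a Hopf algebra map and a bijection. Surjectivity is a consequence of the Pieri rule, which lets one invert it to write each $j_w^B$ as a polynomial in the $h_i^B$. Injectivity follows from a graded dimension count: homogeneous pieces of $\homB$ are indexed by strict partitions with parts $\leq 2n-1$, in bijection with Grassmannian elements of $\WB$ at each rank. Coproduct compatibility is obtained by transporting the coproduct of $H_*(\GrB)$ (inherited from the based loop space) to $\B^B$ and verifying on the generators that $\Delta(\xi_{\rho_i}) = \sum_{j+k=i} \xi_{\rho_j} \otimes \xi_{\rho_k}$ in the correct scaling; this reduces to a combinatorial factorization statement about Pieri factors and the statistic $\stat$.

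Finally, $\Psi$ is defined as the Hopf-algebra dual of $\Phi$ under the pairings on $\homB \otimes \cohomB$ and $H_*(\GrB) \otimes H^*(\GrB)$. The identification $\Psi(\xi^w) = \ASSFB{w}$ follows from a noncommutative Cauchy identity: pairing $j_w^B$ against the kernel $\prod_i \bigl( \sum_{v \in \ZB} 2^{\stat(v)-1}\, v\, y_i^{\ell(v)} \bigr)$ and extracting coefficients reproduces the sum of Definition \ref{def: affine Stanley symmetric functions}. Once the Pieri rule is secured, the remaining steps -- commutativity of the $h_i^B$, generation of $\B^B$, bijectivity of $\Phi$, and the Cauchy dualization -- are formal consequences, so almost all the work is concentrated in establishing the Pieri rule for the special classes $\xi_{\rho_i}$.
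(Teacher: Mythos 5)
Your proposal uses the same broad framework as the paper---Peterson's isomorphism to the affine Fomin--Stanley subalgebra, noncommutative generators built from Pieri factors, and a noncommutative Cauchy kernel to extract $\Psi$---but it misplaces the technical core and leans on an unverified overclaim. The paper does not prove or use a statement that the product $h_i^B \cdot j_w^B$ ``matches term-for-term the classical Pieri rule for multiplying by $Q_i$ on Schur $Q$-functions.'' What it actually proves is: (a) the closed formula $\specialgenB{r} = \sum_{w \in \ZB_r} 2^{\stat(w)-\chi(r<n)} A_w$ for the noncommutative generators (Proposition \ref{prop: type B special generators formula}, established via the cover-coroot criterion of Lemma \ref{lss10 lemma 4.7}); (b) that these generators satisfy the same quadratic relations \eqref{Q relations} as the $Q_r$ (Proposition \ref{prop: type B special generators relation}, by a case analysis of factorizations into segments); and (c) the explicit coproduct formula (Theorem \ref{theorem: type B coproduct formula}). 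Together (b) and (c) show $\Phi$ is a bialgebra map, and because both sides are graded commutative and cocommutative, a Hopf map. The geometric Pieri rule (Theorem \ref{theorem: type B pieri rule}) is then a \emph{consequence} of (a) plus the general $\xi_x\xi_z = \sum_y j_x^y\xi_{yz}$ statement, not a preliminary tool.

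Your route is not obviously wrong, but it is incomplete as stated: you do not indicate how to establish the Pieri rule match, and deriving the quadratic relations among the $h_i^B$ and commutativity ``simultaneously'' from a Pieri rule is not automatic unless you already know the $j_w^B$ behave like Schur $Q$-functions under the Billey--Mitchell bijection $\Bgrass \leftrightarrow \Bpar$, which is essentially what needs proving. The paper also uses the affine Stanley symmetric functions themselves (Propositions \ref{type B ASSF are linearly independent} and \ref{type B ASSF span dual space}) to get bijectivity of $\Psi$, rather than a pure dimension count; your approach via dimension count on the homology side is plausible but also relies on the same Billey--Mitchell bijection. In short: same scaffolding, but you have replaced the paper's verifiable chain (generator formula, then relations, then coproduct) with a single stronger unproved claim, and that is exactly where the work lives.
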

Furthermore, we will show (Propositions \ref{prop: type B k-schur are schur P positive} and \ref{prop: type B k-schur are k-schur positive}) that the embeddings of symmetric functions $\homB \to \Gamma_{(n+1)}^{B_n}$ and $\homB \cong H_*(\Spin(2n+1)) \hookrightarrow H_*(\Omega SU(2n+1)) \cong \mathbb{Z}[h_1, h_2, \ldots, h_{2n}]$ induced by the above isomorphisms agree with the natural embeddings of symmetric functions.  The elements of the basis of $\homB$ dual to $\{\ASSFB{w}\}$ we call type $B$ $k$-Schur functions, and denote by $\{\kSB{w}\}$.

\begin{theorem}
\label{theorem: type B pieri rule}
Given $w \in \Bgrass$, we have in $H_*(\GrB)$:
$$
\xi_{\rho_i} \xi_w = \sum_{v \in \ZB_i} 2^{\stat(v) - \chi(i < n)} \xi_{vw},
$$
where the sum is over $v$ such that $vw \in \Bgrass$ and $\ell(vw) = \ell(v) + \ell(w)$.
\end{theorem}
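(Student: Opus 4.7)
The plan is to translate the identity from the affine Grassmannian into the affine nilCoxeter algebra $\Bn$ via the Hopf algebra isomorphism $\Phi$ of Theorem~\ref{theorem: type B cohomology isomorphism}. Under $\Phi$, the Schubert class $\xi_w$ for $w \in \Bgrass$ is represented by the type $B$ noncommutative $k$-Schur function $\kSB{w}$ in the affine Fomin--Stanley subalgebra $\B \subset \Bn$, and $\xi_{\rho_i}$ corresponds to $2^{\chi(i \geq n)} Q_i$. Multiplication in $H_*(\GrB)$ corresponds to ordinary multiplication of nilCoxeter elements, where $u \cdot v = uv$ when $\ell(uv) = \ell(u)+\ell(v)$ and $0$ otherwise, and left multiplication preserves the span of $\{\kSB{u}\}_{u \in \Bgrass}$ since $\B$ is a subalgebra.

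The first step is to identify the noncommutative Pieri element representing $\xi_{\rho_i}$ as
$$
h_i^B \;=\; \sum_{v \in \ZB_i} 2^{\stat(v) - \chi(i < n)} \, v \;\in\; \B.
$$
This matches the expectation coming from Definition~\ref{def: affine Stanley symmetric functions}: the noncommutative realization of $Q_i$ is a sum over length-$i$ Pieri factors with weight $2^{\stat(v)-1}$, and the additional factor $2^{\chi(i \geq n)}$ attached to $Q_i$ in Theorem~\ref{theorem: type B cohomology isomorphism} combines with $2^{\stat(v)-1}$ to give exactly $2^{\stat(v) - \chi(i<n)}$. To prove the identification, the cleanest route is to use the Hopf pairing $[\cdot,\cdot]$ between $\homB$ and $\cohomB$: it suffices to check $[h_i^B,\, \ASSFB{w}] = 2^{\chi(i \geq n)} [Q_i, \ASSFB{w}]$ for all $w \in \Bgrass$, which reduces to a direct combinatorial expansion using the definition of $\ASSFB{w}$ as a generating function over Pieri factorizations.

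Once $h_i^B$ is known, the Pieri rule follows by direct expansion in $\B$:
$$
\xi_{\rho_i}\, \xi_w \;=\; h_i^B \cdot \kSB{w} \;=\; \sum_{v \in \ZB_i} 2^{\stat(v) - \chi(i<n)} \, v \cdot \kSB{w},
$$
and by the characterizing property of noncommutative $k$-Schur functions, $v \cdot \kSB{w}$ contributes $\kSB{vw}$ whenever $vw \in \Bgrass$ and $\ell(vw) = \ell(v) + \ell(w)$, while other contributions cancel inside $\B$. The main obstacle is pinning down the power of $2$ on each $v \in \ZB_i$, which is governed by the fact that $\{0,1\}$ are incomparable in the affine type $B$ Dynkin ordering; this is precisely what forces $\stat$ (components of the complement of $\Supp(v)$) to appear in place of the simpler statistic $c$ used in type $C$. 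Verifying that $2^{\stat(v) - \chi(i<n)}$ is simultaneously the coefficient extracted by the Hopf pairing and the coefficient making $h_i^B$ lie in $\B$ requires a case analysis over the generating Pieri factors of Definition~\ref{def: type B pieri factors} and their admissible cyclic rotations; checking the boundary behavior at $i = n$ (where the $2^{\chi(i \geq n)}$ factor turns on) is the delicate point, and is the technical heart of the argument.
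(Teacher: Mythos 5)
Your high-level plan — push everything into the affine Fomin--Stanley subalgebra $\B$ via $j_0$, identify the noncommutative Pieri element $j_0(\xi_{\rho_i}) = \sum_{v\in\ZB_i}2^{\stat(v)-\chi(i<n)}A_v$, and multiply — is indeed the paper's route: the paper simply cites Proposition~\ref{prop: type B special generators formula} (the Pieri element formula) together with Theorem~\ref{theorem: homology pieri rule} (the Lam--Shimozono homology Pieri rule from \cite{ls07}). However, your version glosses over the second of these ingredients in a way that leaves a real gap. The claim that ``$v\cdot\kSB{w}$ contributes $\kSB{vw}$ $\ldots$ while other contributions cancel inside $\B$'' does not follow from ``the characterizing property.'' Theorem~\ref{theorem: j_0 is isomorphism} only says $\specialgenB{u}=A_u+(\text{non-Grassmannian terms})$; it does not by itself tell you that the Grassmannian part of $\sum_{v}j_{\rho_i}^v\,A_v\cdot\specialgenB{w}$ is supported exactly on $\{A_{vw}\}$ with the same coefficients $j_{\rho_i}^v$. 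A priori a Pieri factor $v$ could hit a non-Grassmannian term $A_u$ of $\specialgenB{w}$ and land on a Grassmannian $A_{vu}$ with $vu$ not of the form $v'w$. Ruling this out is precisely the content of Theorem~\ref{theorem: homology pieri rule} (\cite{ls07}, Theorem 6.3), a substantive theorem that must be cited rather than asserted. (Also, for a single $v\in\ZB_i$ the product $A_v\cdot\specialgenB{w}$ typically does \emph{not} lie in $\B$ — only the full weighted sum does — so ``left multiplication preserves the span since $\B$ is a subalgebra'' is not quite right.)

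Your proposed route to the identification $j_0(\xi_{\rho_i})=h_i^B$ is also off from what actually works. Using the Hopf pairing $[\cdot,\cdot]$ against $\ASSFB{w}$ is circular in this paper's setup: $h_i^B$ is a nilCoxeter element, not a symmetric function, so $[h_i^B,\ASSFB{w}]$ only makes sense after pulling through the isomorphisms one is trying to establish, and in the Cauchy-kernel picture the $\ASSFB{w}$ are themselves built from the $\specialgenB{r}$. What Proposition~\ref{prop: type B special generators formula} really requires is verifying $h_i^B\in\B$ with leading Grassmannian term $A_{\rho_i}$, and the paper does this through Lemma~\ref{lss10 lemma 4.7}, the cover-coroot criterion $\sum_{w\gtrdot v}c_w\,\asscovercoroot\in\mathbb{Z}K$, by the case analysis recorded in Proposition~\ref{type free cover coroot sum}. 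With that proposition and Theorem~\ref{theorem: homology pieri rule} in hand, the Pieri rule really is a one-line consequence, as the paper states.
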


The proofs of the above theorems may be found in \textsection \ref{section: proof of main theorems}.

\subsection{Type $D$ main results}
In type $D$, the situation is not as favorable.  As noted in \cite{lam09}, there may not be a Hopf inclusion $H_*(\textrm{Gr}_D) \hookrightarrow \Lambda$.  However, it may be possible to approximate $H_*(\textrm{Gr}_D)$ with symmetric functions by using a slightly non-injective map.  We conjecture dual symmetric function algebras that approximate the (co)homology of $\GrD$.

\begin{definition}
\label{def: type D pieri factors}
Type $D$ Pieri factors are generated by the following affine Weyl group elements: $s_0 s_2 \cdots s_{n-2} s_n s_{n-1} s_{n-2} \cdots s_2 s_0$, $s_0 s_1 s_2 \cdots s_{n-2} s_n s_{n-1} s_{n-2} \cdots s_2$, all cyclic rotations of the latter such that $s_0,s_1$ remain adjacent and $s_{n-1},s_n$ remain adjacent, and all images of these words under Dynkin diagram automorphisms.
\end{definition}

Let the Grassmannian elements $\rho_i \in \WD$ be given by:
\begin{equation}
\label{eq: type D rho}
\rho_i = 
\left \{ \begin{array}{cc}
s_0 & i =1\\ 
s_i \cdots s_3 s_2 s_0 & 2 \leq i < n-1 \\
 s_{2n-1-i} s_{2n-i} \cdots s_{n-2} s_n s_{n-1} s_{n-2} \cdots s_2 s_0 & n-1 < i < 2n-2 \\ 
 s_0 s_2 s_3 \cdots s_{n-2} s_n s_{n-1}s_{n-2} \cdots s_3 s_2 s_0 & i = 2n-2. 
 \end{array} \right. 
\end{equation}
and let $\rho_{n-1}^{(1)} = s_ns_{n-2}\cdots s_2 s_0$ and $\rho_{n-1}^{(2)} = s_{n-1}s_{n-2} \cdots s_2 s_0$.
Let 
$$
\homD =  \mathbb{Z}[Q_1, \ldots, Q_{n-1}, 2Q_n, \ldots, 2Q_{2n-2}],
$$
and let $\cohomD$ be the dual quotient $\mathbb{Z}$-Hopf algebra embedded in $\Gamma^*$.

It is impossible to find a surjective map from $\homD$ onto $H_*(\GrD)$ -- as stated in \cite{lam09}, $H_*(\GrD)$ may have a primitive subspace of dimension 2 in a given degree, whereas $\Lambda$ has primitive spaces of dimension 1 in all degrees.  Thus the pushforward  $i_*: H_*(\Omega \Spin(2n)) \to H_*(\Omega SU(2n))$ must have a nontrivial kernel.  
\begin{conj}
\label{conj: type D conjecture}
The kernel $\gamma$ of $i_*: H_*(\Omega \Spin(2n)) \to H_*(\Omega SU(2n))$ is generated by  $\xi_{\rho_{n-1}^{(2)}} - \xi_{\rho_{n-1}^{(1)}}$, and $H_*(\Omega \Spin(2n))/\gamma$ is isomorphic to $\homD$; under the dual isomorphism, affine Stanley symmetric functions represent cohomology Schubert classes.  Furthermore, the inclusion $\homD \cong H_*(\Omega \Spin(2n))/\gamma  \to H_*(\Omega SU(2n)) \cong \mathbb{Z}[h_1,\ldots,h_{2n-1}]$ corresponds to the natural inclusion of symmetric functions.
\end{conj}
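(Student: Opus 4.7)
The plan is to closely parallel the type $B$ proof strategy of Theorem~\ref{theorem: type B cohomology isomorphism}, adapting it to accommodate the non-injectivity of $i_*$. First I would construct a candidate Hopf algebra homomorphism $\Phi_D: \homD \to H_*(\GrD)$ on generators by setting $\Phi_D(2^{\chi(i \geq n)} Q_i) = \xi_{\rho_i}$ for $1 \leq i \leq 2n-2$, and define a companion dual map $\Psi_D: H^*(\GrD) \to \cohomD$ sending $\xi^w \mapsto \ASSFD{w}$. As in type $B$, the nilCoxeter-algebraic model of $H_*(\GrD)$ via noncommutative type $D$ $k$-Schur functions, together with a Cauchy-kernel identity, should show that $\Psi_D$ is a well-defined Hopf algebra map with $\Phi_D$ its dual; a type-$D$ Pieri rule analogous to Theorem~\ref{theorem: type B pieri rule} would be the computational engine.

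Next, to identify the kernel $\gamma$, I would study the two Grassmannian elements $\rho_{n-1}^{(1)}$ and $\rho_{n-1}^{(2)}$, which are interchanged by the Dynkin automorphism swapping $s_{n-1}$ and $s_n$. Under the embedding $\iota: \Spin(2n) \hookrightarrow SU(2n)$, this automorphism becomes invisible in the target, so the two Schubert classes should share a common image, giving the containment $\xi_{\rho_{n-1}^{(2)}} - \xi_{\rho_{n-1}^{(1)}} \in \gamma$. To show the ideal generated by this single element exhausts $\gamma$, I would carry out a graded rank computation: combining Bott's Poincar\'e polynomial for $H_*(\Omega \Spin(2n))$ with the known polynomial algebra $H_*(\Omega SU(2n)) \cong \mathbb{Z}[h_1,\ldots,h_{2n-1}]$, one verifies in each degree that the quotient $H_*(\Omega \Spin(2n))/\langle \xi_{\rho_{n-1}^{(2)}} - \xi_{\rho_{n-1}^{(1)}}\rangle$ has the Hilbert series of $\homD$, forcing $i_*$ to become injective modulo this single relation.

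The main obstacle, I expect, is verifying that $\Phi_D$ is well-defined as a map \emph{out of} $\homD$ rather than out of some larger algebra; equivalently, that the Schur $P$-function relations among the $Q_i$ are honored by the chosen Schubert classes $\xi_{\rho_i}$. In type $B$ this was handled via the noncommutative $k$-Schur framework and explicit Pieri computations inside the nilCoxeter algebra, but in type $D$ the presence of two distinguished classes $\xi_{\rho_{n-1}^{(1)}}, \xi_{\rho_{n-1}^{(2)}}$ of equal length means that only their difference should be annihilated, and one must commit to a preferred preimage for $2Q_{n-1}$ under $\Phi_D$. Checking that this choice is simultaneously compatible with the type $D$ Pieri rule, the Dynkin automorphism action, and the positive expansion of $\ASSFD{w}$ in terms of the dual $k$-Schur basis is the delicate step; the Sage data referenced in the introduction should be used both to test the conjecture in small ranks and to pin down the right normalization.

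Finally, the natural-inclusion clause follows once $\Phi_D$ is established: Lam's type $A$ isomorphism identifies $i_*(\xi_{\rho_i})$ with an explicit polynomial in the $h_j$, and the Schur-$P$-to-complete-homogeneous expansion on the symmetric function side matches this expression, so commutativity of the diagram of inclusions can be verified by evaluation on the generating set $\{Q_1,\ldots,Q_{n-1},2Q_n,\ldots,2Q_{2n-2}\}$ of $\homD$.
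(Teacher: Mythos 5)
This statement is labeled a \emph{conjecture} in the paper, and the paper offers no proof of it. In the Future Directions subsection, the author explicitly notes that a proof ``most likely will involve a hard spectral sequence computation, and techniques similar to the type $B$ case,'' and in Section~\ref{section: proof of main theorems} merely observes that ``a proof of Conjecture~\ref{conj: type D conjecture} will likely follow the same scheme as the type $B$ proofs.'' So there is no proof in the paper to compare your proposal against; what you have written is a plan, and what the paper supplies is a (shorter) plan.

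That said, your outline tracks the paper's suggested strategy closely: construct $\Phi_D$ and $\Psi_D$ by analogy with Theorem~\ref{theorem: type B cohomology isomorphism}, lean on the nilCoxeter/Cauchy-kernel machinery (Propositions~\ref{prop: type D special generators formula} and~\ref{prop: type D special generators relation}, Theorem~\ref{theorem: type D coproduct formula}), and close the argument with a graded-dimension count against Bott's description of $H_*(\Omega\Spin(2n))$. Two remarks worth flagging. First, your worry about ``committing to a preferred preimage for $2Q_{n-1}$'' is somewhat misplaced: the degree-$(n-1)$ generator of $\homD$ is $Q_{n-1}$ (not $2Q_{n-1}$, since $n-1<n$), and the conjectured isomorphism is with the \emph{quotient} $H_*(\Omega\Spin(2n))/\gamma$, in which $\xi_{\rho_{n-1}^{(1)}}$ and $\xi_{\rho_{n-1}^{(2)}}$ are already identified, so there is no choice of lift to make at the level where the isomorphism is asserted; the paper anticipates exactly this by working with $\specialgenD{n-1} = \tfrac12\bigl(\mathbb{P}_{\rho_{n-1}^{(1)}}^D + \mathbb{P}_{\rho_{n-1}^{(2)}}^D\bigr)$ and the quotient $\mathbb{B}/\langle\epsilon\rangle$. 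Second, the genuinely hard step your plan leaves unresolved is the same one the paper leaves unresolved: a rank count shows the quotient has the \emph{right Hilbert series}, but establishing that $\gamma$ is literally the principal ideal $\langle\xi_{\rho_{n-1}^{(2)}}-\xi_{\rho_{n-1}^{(1)}}\rangle$ and that the quotient has the correct ring structure (not just graded $\mathbb{Z}$-module structure) requires the spectral-sequence or comparable topological input that the paper points to as the missing ingredient. In short: your approach is the expected one, but it does not close the gap any more than the paper does.
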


Although we cannot describe the (co)homology explicitly via symmetric functions, our description of Pieri factors, however, is enough to present a Pieri rule for type $D$ homology.  Unfortunately, for some cases (i.e., $i = n-1$), the rule is still complicated.
\begin{theorem}
\label{theorem: type D pieri rule}
Given $w \in \Dgrass$ and $i \neq n-1$, we have in $H_*(\GrD)$:
$$
\xi_{\rho_i} \xi_w = \sum_{v \in \ZD_i} 2^{\stat(w) - \chi(i < n)} \xi_{vw},
$$
where the sum is over $v$ such that $vw \in \Dgrass$ and $\ell(vw) = \ell(v) + \ell(w)$.  

If $i = n-1$, we have
$$
\xi_{\rho_{n-1}^{(1)}} \xi_w = \sum_{v \in \ZD_{n-1}} c_v \xi_{vw}
$$
where $c_v$ is the coefficient of $A_v$ in $\specialgenD{n-1} + \epsilon$ (see Section \ref{subsection: type D special generators} for definitions).  A similar formula holds for $\xi_{\rho_{n-1}^{(2)}} \xi_w$.
\end{theorem}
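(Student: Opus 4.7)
The plan is to translate the statement into the nilCoxeter algebra and argue by direct computation in the affine Fomin-Stanley subalgebra, mirroring the strategy already in place for type $B$ (Theorem \ref{theorem: type B pieri rule}) and for the type $C$ Pieri rule of Lam--Schilling--Shimozono. Under the homology isomorphism $\Phi$ (the type $D$ analogue of the one in Theorem \ref{theorem: type B cohomology isomorphism}, conjecturally on the nose modulo $\gamma$), the Schubert class $\xi_{\rho_i}$ corresponds to a distinguished central-like element $\specialgenD{i}$ of the affine Fomin-Stanley subalgebra. So the first step is to give an explicit expansion $\specialgenD{i} = \sum_{v\in \ZD_i} c_v^{(i)}\, A_v$ inside the nilCoxeter algebra and then show that, after multiplying by $A_w$ and projecting to the Grassmannian basis, one recovers the claimed coefficients. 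The coefficients $c_v^{(i)}$ are forced by the requirement that $\specialgenD{i}$ commute modulo the right ideal generated by $\{A_s : s \in I\setminus\{0\}\}$ with everything in the subalgebra, which is exactly the condition that carves out Fomin-Stanley.

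For $i\neq n-1$ the argument proceeds cleanly. First I would verify that for every Pieri factor $v \in \ZD_i$ the coefficient $c_v^{(i)}$ equals $2^{\stat(v)-\chi(i<n)}$; this is a reduced-word count that matches the combinatorics of components of the complement of $\Supp(v)$, since each such component contributes an independent commuting choice in the reduced-word decomposition of $\rho_i$ (the Dynkin-diagram automorphism of type $D$ swapping $\{n-1,n\}$ and the commutation of $s_0,s_1$ each give the factor of $2$, and the $\chi(i<n)$ normalizes out the single automorphism that is already recorded by the length-maximal choice in Definition \ref{def: type D pieri factors}). Next, I would expand $\specialgenD{i}\cdot A_w$ and identify the Grassmannian pieces: a nonzero contribution to $A_{vw}$ requires $\ell(vw) = \ell(v)+\ell(w)$ and $vw \in \Dgrass$, which are exactly the indexing conditions in the theorem. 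The positivity and cancellation of non-Grassmannian terms follow because $\specialgenD{i}$ lies in the Fomin-Stanley subalgebra.

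The case $i = n-1$ is the main obstacle, and I would handle it separately. Here the two Grassmannian elements $\rho_{n-1}^{(1)}, \rho_{n-1}^{(2)}$ are interchanged by the diagram automorphism that swaps $s_{n-1} \leftrightarrow s_n$, but the set $\ZD_{n-1}$ of Pieri factors is not symmetric under this automorphism (the ``fork'' in the $D$-diagram breaks the symmetry that was present for intermediate $i$). As a result, the naive sum $\sum_{v\in \ZD_{n-1}} 2^{\stat(v)-1} A_v$ is not itself Fomin-Stanley; one must add a correction $\epsilon$ (described in Section \ref{subsection: type D special generators}) supported on the extremal Pieri factors to restore the commutation relations modulo the right ideal. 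The coefficients $c_v$ of the theorem are by definition the coefficients of $\specialgenD{n-1}+\epsilon$, and once this element is shown to be the correct representative of $\xi_{\rho_{n-1}^{(1)}}$, the Pieri expansion follows as in the previous case.

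The hardest step, as just indicated, is pinning down $\epsilon$: one has to show that exactly this correction (and no simpler modification) produces an element of the Fomin-Stanley subalgebra, which is a finite but delicate verification of the commutation relations $\specialgenD{n-1}\cdot A_s \equiv A_s \cdot \specialgenD{n-1} \pmod{\text{non-Grassmannian}}$ for each $s \in I\setminus\{0\}$. A secondary technical point is that, because we are in a setting where $\Phi$ is not yet known to be an isomorphism (it depends on Conjecture \ref{conj: type D conjecture}), the identification $\Phi(\specialgenD{i}) = \xi_{\rho_i}$ must be argued intrinsically from the Fomin-Stanley side, using only the fact that $\{\xi_w : w \in \Dgrass\}$ is a basis and that $\specialgenD{i}$ has the correct leading Grassmannian term $A_{\rho_i}$ together with the compatible length-additivity structure.
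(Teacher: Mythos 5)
Your plan — expand $\specialgenD{i}$ (and $\specialgenD{n-1}+\epsilon$) in the $\{A_v\}$ basis and read off Pieri coefficients — is the right one, and it is essentially what the paper does. But you have introduced a dependence that the paper does not need. You route the identification $\xi_{\rho_i} \leftrightarrow \specialgenD{i}$ through ``the homology isomorphism $\Phi$, conjecturally on the nose modulo $\gamma$,'' and at the end you flag that because $\Phi$ rests on Conjecture~\ref{conj: type D conjecture} the identification ``must be argued intrinsically.'' This conflates two different maps. The conjectural $\Phi$ is the map $\homD \to H_*(\GrD)$ from symmetric functions; it plays no role here, since the Pieri rule lives entirely inside $H_*(\GrD)$. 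The map you actually want is Peterson's isomorphism $j_0 : H_*(\GrD) \to \mathbb{B}$ of Theorem~\ref{theorem: j_0 is isomorphism}, which is unconditional for every type, and by definition $\specialgen{w} = j_0(\xi_w)$, so $\specialgenD{i} = j_0(\xi_{\rho_i})$ for $i \neq n-1$ and $\specialgenD{n-1}+\epsilon = \specialgenD{\rho_{n-1}^{(1)}} = j_0(\xi_{\rho_{n-1}^{(1)}})$ with nothing further to check. As written, your argument would make the theorem conditional on Conjecture~\ref{conj: type D conjecture}; the paper's theorem is unconditional.

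With the right map in hand, the result is a two-citation proof, and that is precisely what the paper gives: Theorem~\ref{theorem: homology pieri rule} (the Lam--Shimozono rule $\xi_x \xi_z = \sum_y j_x^y \xi_{yz}$, with $j_x^y$ the coefficient of $A_y$ in $j_0(\xi_x)$) together with Proposition~\ref{prop: type D special generators formula} (which supplies $\specialgenD{r} = \sum_{v \in \ZD_r} 2^{\stat(v)-\chi(r<n)}A_v$ and $\epsilon \in \mathbb{B}$). Your ``multiply by $A_w$ and project to the Grassmannian basis'' step is an informal re-derivation of Theorem~\ref{theorem: homology pieri rule}; you should cite it rather than rebuild it. Also note that your characterization of the Fomin--Stanley condition (``commute modulo the right ideal generated by $\{A_s : s \in I\setminus\{0\}\}$'') is not the paper's definition: $\mathbb{B}$ is cut out by $\phi_0(as) = \phi_0(s)a$ for all $s \in S$, and the coefficient computation is carried out via Lemma~\ref{lss10 lemma 4.7}. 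A minor aside: the theorem statement has $\stat(w)$ where it should read $\stat(v)$, and you implicitly corrected this by writing $\stat(v)$.
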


The proofs of the above theorems are contained in Section \ref{section: proof of main theorems}.  Some proofs of supporting lemmas are very lengthy and similar enough to those contained in \cite{lss10} that we refer the reader to \cite{lss10} or \cite{pon10} for details.

\subsection{Future directions} Many natural questions remain; a small sample includes the following.
\begin{itemize}
\item A proof of Conjecture \ref{conj: type D conjecture}.  This most likely will involve a hard spectral sequence computation, and techniques similar to the type $B$ case.
\item   Although a type-free description of Pieri factors exists, the current proof is verified on a type-specific basis -- it would be desirable to have a type-free proof of our description.  Such a type-free proof would almost certainly require a type-free description of the statistic $\genericstat(w)$, which might lead easily to Pieri factors for the exceptional types.
\item The $\tilde{Q}-$functions studied extensively by Pragacz (see, for example, \cite{pr97, lp00}) may be a more natural symmetric function model for homology; it would be interesting to see how they relate.
\end{itemize}

\section{Background}
\label{section: background}

In this section, we present the necessary background, mostly following the conventions and notation of \cite{mac95} and \cite{kac90}.

\subsection{Symmetric functions}
Let $\Lambda$ be the ring of symmetric functions, and let $\parts$ be the set of partitions.  Let $h_\lambda$ the complete homogeneous symmetric functions, $p_\lambda$ the power sum symmetric functions, and $m_\lambda$ the monomial symmetric functions for $\lambda \in \parts$.

Schur's $P-$ and $Q-$ functions are symmetric functions that arose in the study of projective representations of the symmetric group, where they play the role of Schur functions in linear representations of the symmetric group.  They may be defined in several ways; we present a combinatorial definition as sums over \emph{shifted tableaux}, due to Stembridge \cite{ste89}.  

Let $\lambda \in \parts$ be a strict partition (that is, a partition with all parts distinct), and define an alphabet $\mathcal{A} = \{ \bar{1},1,\bar{2},2,\ldots\}$ and partial ordering $\bar{1} < 1 < \bar{2} < 2 < \cdots$.  Then a marked, shifted tableau of shape $\lambda$ is a diagram of $\lambda$ where row $i$ is shifted by $i-1$ spaces, and all boxes of $\lambda$ are filled with letters from $\mathcal{A}$ such that i) labels weakly increase along rows and columns, ii) columns have no repeated unbarred letters, and iii) rows have no repeated barred letters.

For a shifted marked tableau $T$, we may then define $x^T = x_1^{c_1}x_2^{c_2}\cdots$, where $c_i = $ the number of $i$'s in $T$ (both barred and unbarred).  Then
\begin{equation}
Q_\lambda = \sum_{T} x^T,
\end{equation}
the sum over all shifted marked tableaux of shape $\lambda$.  Schur's $P-$functions are scalar multiples of the $Q_\lambda$; $P_\lambda = 2^{-\ell(\lambda)}Q_\lambda$, where $\ell(\lambda)$ is the number of nonzero parts of $\lambda$.  By the definition given above, Schur $P-$ and $Q-$ functions are defined only for strict partitions.  We denote the set of strict partitions by $\mathcal{SP}$.  The $P_\lambda$ are a basis for $\Gamma^*$ defined above, and the $Q_\lambda$ are a basis for $\Gamma_*$.

\begin{example}
A shifted tableau of shape $(6,4,3)$ is given below.  In the formula for Schur $Q$-functions, this tableau would correspond to a monomial $x_1x_2^2x_3^5x_4^2x_5^2x_7$.
\begin{center}
\young(1\bartwo\barthree334,:2\barthree45,::357)
\end{center}
\end{example}

It is well known that the ring of symmetric functions has a Hopf algebra structure.  On $\Gamma_*$, the coproduct is given by
\begin{equation}
\Delta(Q_r) = 1 \otimes Q_r + Q_r \otimes 1 + \sum_{1 \leq s < r} Q_s \otimes Q_{r-s}.
\end{equation}

Furthermore, the $Q_i$ satisfy only the relations:
\begin{equation}
\label{Q relations}
Q_{i}^2 = 2(Q_{i-1}Q_{i+1} - Q_{i-2}Q_{i+2} + \cdots \pm Q_{0}Q_{2i} )
\end{equation}
where we let $Q_0 = 1$ \cite[III.8.2']{mac95}.\\

Define the Hall-Littlewood scalar product, a pairing $[\cdot,\cdot] : \Gamma_* \times \Gamma^* \to \mathbb{Z}$, by $[Q_\lambda, P_\mu] = \delta_{\lambda \mu}$ for $\lambda, \mu \in \mathcal{SP}$.

The pairing $[\cdot, \cdot]$ has reproducing kernel
\begin{eqnarray}
\Omega_{-1} &:=& \prod_{i,j \geq 1} \frac{1 + x_iy_j}{1-x_iy_j}\\
&=& \sum_{\lambda \in \mathcal{SP}} 2^{-\ell(\lambda)}Q_\lambda[X]Q_\lambda[Y]\\
&=&\sum_{\lambda \in \mathcal{P}} Q_{\lambda_1}[X]Q_{\lambda_2}[X]\cdots m_{\lambda}[Y]
\end{eqnarray}
where the second equality is by \cite[III.8.13]{mac95} and the third is by setting $t = -1$ in \cite[III.4.2]{mac95}.  Following Macdonald, we denote $q_\lambda = Q_{\lambda_1}Q_{\lambda_2}\cdots$.

\subsection{Weyl groups}For more background on Weyl groups, see \cite{kac90}, \cite{car05}, \cite{hum90}, \cite{bb05}.  We will assume all Lie algebras are non-twisted.

Let $(\Iaf,\Aaf)$ denote a Cartan datum of affine type, and denote the corresponding finite type Cartan datum by $(I,A)$.  The affine Weyl group $\Waf$ corresponding to $(\Iaf, \Aaf)$ is given by generators $s_i$ for $i \in \Iaf$, and relations $s_i^2 = 1$,
\begin{equation}
\label{braid relations}
(s_is_j)^{m(i,j)} = 1 \qquad \textrm{ for } i \neq j 
\end{equation}
 where $m(i,j) = 2,3,4,6,$ or $\infty$ as $a_{ij}a_{ji}$ equals $0,1,2,3,$ or $\geq 4$.  The associated finite Weyl group $W$ has the same relations, but with generators $s_i, i \in I$.

Given any element $w$ of $\Waf$ or $W$, there are a number of words in the generators $s_i$ for $w$, all of which are connected via the braid relations (\ref{braid relations}).  The \emph{length function} $\ell : \Waf \to \mathbb{Z}_{\geq 0}$ is given by $\ell(w) = k$ if $k$ is minimal such that $s_{i_1} s_{i_2} \cdots s_{i_k}$ is a word for $w$.  If $\ell(w) = k$, we call an expression $w = s_{i_1} \cdots s_{i_k}$ a \emph{reduced expression}, and call $i_1 \cdots i_k$ a \emph{reduced word} for $w$.  We denote the set of all reduced words for $w$ by $R(w)$.  Elements of $\Waf$ or $W$ that are conjugates of the $s_i$ are called reflections.  

The Bruhat order on $\Waf$ or $W$ is a partial ordering given by $v \leq w$ if some (equivalently, every) reduced word for $v$ is a subword of a reduced word for $w$.  We let $\lessdot$ denote the covering relation of Bruhat order, so that $v \lessdot w$ if $v \leq w$ and $\ell(v) + 1 = \ell(w)$.

Given a subset $J$ of $\Iaf$, we will define the parabolic subgroup $(\Waf)_J \subset \Waf$ as the subgroup generated by $\{s_i \mid i \in J\}$, and denote by $\Waf^J$ a set of minimal length coset representatives for $\Waf/(\Waf)_J$.\\

\subsection{(Co)roots and (co)weights}

We will let $\gaf$ be the affine Kac-Moody algebra associated to a Cartan datum $(\Iaf, \Aaf)$, and let $\g$ be the associated finite Lie algebra with Cartan datum $(I = \Iaf \setminus \{0\},A = (\Aaf)_{i,j = 1}^n)$ (see \cite{kac90} for details).  The corresponding Cartan subalgebras are denoted $\haf$ and $\h$, respectively.  Given an affine Cartan datum, we have the set of simple roots $\Piaf = \{\alpha_{0}, \alpha_{1}, \ldots \alpha_{n} \} \subset \haf^*$ and simple coroots $\Piaf^\vee = \{ \coroot{0}, \coroot{1}, \ldots, \coroot{n} \} \subset \haf$ such that $\langle \coroot{i}, \alpha_j,  \rangle := \alpha_j(\coroot{i}) = a_{ij}$.  Since $\Aaf$ is of corank 1, there is a unique positive integer vector $a = (a_0, a_1, \ldots, a_n)$ whose entries have no common factor such that $\Aaf \,a = 0$.  We let $\delta = a_0\alpha_0 + a_1\alpha_1 + \cdots + a_n \alpha_n = a_0\alpha_0 + \theta \in \haf^*$ be the null root.  Correspondingly, there is a vector $a^\vee = (a_0^\vee, \ldots, a_n^\vee)$ such that $a^\vee \Aaf = 0$; we let $K = a_0^\vee \coroot{0} + \cdots + a_n^\vee \coroot{n} \in \haf$ be the canonical central element.  There is a basis of $\haf$ given by $\{\coroot{0}, \ldots, \coroot{n},d\}$, where $d$ is the scaling element such that $\langle \alpha_i, d \rangle = \delta_{0i}$.  We also have the fundamental weights $\Lambda_0, \ldots, \Lambda_n \in \haf^*$ such that $\{\Lambda_0, \ldots, \Lambda_n, \delta \}$ is dual to the above basis of $\haf$.  The affine root lattice is denoted by $\Qhat = \bigoplus_{i \in \Iaf} \mathbb{Z}\alpha_i$ and the affine coroot lattice by $\Qhat^\vee = \bigoplus_{i \in \Iaf} \mathbb{Z}\coroot{i} \oplus \mathbb{Z}d$.  The affine weight lattice is $\Phat = \bigoplus_{i \in \Iaf} \mathbb{Z}\Lambda_i \oplus \mathbb{Z}\delta$, and the affine coweight lattice $\Phat^\vee = \bigoplus_{i \in \Iaf} \mathbb{Z} \coweight{i}$ is generated by the fundamental coweights, which are dual to the simple roots.

The finite coroot lattice is $Q^\vee = \bigoplus_{i \in I} \mathbb{Z} \coroot{i} \subset \Qaf^\vee$.  The finite root lattice $Q$ is a quotient of $\Qaf$, but we will identify it with a sublattice $Q = \bigoplus_{i \in I} \mathbb{Z}\alpha_i \subset \Qaf$.  The finite fundamental weights can be embedded in $\Paf$ by $\omega_i = \Lambda_i - \langle \Lambda_i, K\rangle \Lambda_0$, for $i \in I$, and as in the case of the finite root lattice, we identify the finite weight lattice $P$ with a sublattice $P = \bigoplus_{i \in I} \mathbb{Z} \omega_i \subset \Paf$.  The finite coweight lattice is denoted by $P^\vee = \bigoplus_{i \in I} \mathbb{Z}\fincoweight{i}$, where $\alpha_i(\fincoweight{j}) = \delta_{ij}$..

\subsection{Geometric representation of the affine Weyl group}

We have an action of $\Waf$ on $\haf$ given by $s_i(\mu) = \lambda - \langle \mu, \alpha_{i} \rangle \alpha_i^\vee$, and $w(K) = K$ for $w \in \Waf$.  More generally, for any real root $\alpha^\vee \in \haf$, we have the element $s_\alpha \in \Waf$ which acts by $s_\alpha(\mu) = \mu - \langle \mu, \alpha \rangle \alpha^\vee$.  Also for elements $\alpha \in \h^*$, we have the ``translation'' endomorphism of $\haf^*$, $t_\alpha$, given by
\begin{equation}
t_\alpha(\lambda) = \lambda + \langle \lambda, K \rangle \alpha - ((\lambda \mid \alpha) + \frac{1}{2} |\alpha|^2 \langle \lambda, K \rangle ) \delta.
\end{equation}
It is not hard to show that $t_\alpha t_\beta = t_{\alpha + \beta}$, and also that $t_{w(\alpha)} = w t_{\alpha} w^{-1}$ for $w \in W$.  We let $M = \nu(Q^\vee)$, and let the abelian group generated by $\{t_\alpha \mid \alpha \in M\}$ be denoted $T_M$.  The affine Weyl group can be presented as $\Waf = W \ltimes T_M$.

\subsection{Alcoves} Let $\h^*_\mathbb{R}$ be the $\mathbb{R}$-linear span of the finite simple roots, and let $\h_\mathbb{R}$ be the $\mathbb{R}$-linear span of the finite simple coroots.  Then let $\haf^* \otimes \mathbb{R} = \h^*_\mathbb{R} + \mathbb{R}K + \mathbb{R}d$, and $\haf \otimes \mathbb{R} = \h_\mathbb{R} + \mathbb{R}\Lambda_0 + \mathbb{R}\delta$.  Let $(\haf^*)_s = \{ \lambda \in \haf^* \otimes \mathbb{R} \mid \langle \lambda, K \rangle = s\}$.  The hyperplanes $(\haf^*)_s$ are invariant under the $\Waf$ action described above, and the action of $\Waf$ on $(\haf^*)_0$ is faithful.  Furthermore, the action of $\Waf$ on $(\haf^*)_1 / \mathbb{R}\delta$ is also faithful.  We can identify $9\haf^*)_1 / \mathbb{R}\delta$ with $\h^*_\mathbb{R}$ by projection, thereby identifying $\Waf$ with a group of affine transformations of $\h^*_\mathbb{R}$.  Under this isomorphism, $t_\alpha$ corresponds to translation by $\alpha$, for $\alpha \in M$.  Define the hyperplanes $H_{\alpha,k}$ in $\h^*_\mathbb{R}$ by $H_{\alpha,k} = \{ x \in \h^*_\mathbb{R} \mid (\alpha \mid x) = k\}$, and define the fundamental alcove $\funalc$ as the domain bounded by $\{ H_{\alpha_i, 0} \mid i = 1,\ldots, n\} \cup \{ H_{\theta,1} \}$.  The fundamental alcove is a fundamental domain for the action of $\Waf$ on $\h^*_\mathbb{R}$.  The images of $\funalc$ are the alcoves, and are in bijection with the elements of $\Waf$ via $w \leftrightarrow w\funalc$.

\subsection{The extended affine Weyl group} Just as we can see the affine Weyl group as $\Waf = W \ltimes T_M$, we can view the extended affine Weyl group as $\Wafext = W \ltimes T_{\widetilde{M}}$, where $\widetilde{M} = \nu(P^\vee)$ and the action of translations is as above.  Let $C$ be the dominant Weyl chamber, $C = \{ \lambda \in \Phat \otimes_\mathbb{Z} \mathbb{R} \mid \langle \coroot{i}, \lambda \rangle \geq 0 \textrm{ for all } i \in \Iaf\}$.  If we let $\Sigma$ be the subgroup of $\Wafext$ stabilizing $C$, then we can write $\Wafext = \Sigma \ltimes \Waf$, where $\tau s_i \tau^{-1} = s_{\tau(i)}$ for $\tau \in \Sigma$.  The group $\Sigma$ is the finite group of Dynkin diagram automorphisms -- permutations of the nodes of the Dynkin diagram that preserve the graph structure.  Elements of $\Sigma$ permute the simple roots; i.e., if $\tau(i) = j$, then $\tau(\alpha_i) = \alpha_j$.  

One can view the extended affine Weyl group as acting on $|\Sigma|$ copies of $\h^*_\mathbb{R}$.  Label the wall of $\funalc$ formed by $H_{\alpha_i,0}$ by $i$ and the wall formed by $H_{\theta,1}$ by $0$.  Label the walls of all other alcoves so that the labeling is $\Waf$-equivariant.  Then elements of $\Sigma$ correspond to permuting the labels on all alcoves, which we can view as transitioning between different copies of $\h^*_\mathbb{R}$.

\section{The affine Grassmannian}

This section imitates \cite[\textsection 2]{lam08} and \cite[\textsection 4]{lss10}, and introduces the necessary algebraic and geometric background to prove our main theorems.  See \cite{pet97, lam08, lss10} for more details.

\subsection{The nilCoxeter algebra} Given a Cartan datum $(\Iaf, \Aaf)$ of affine type, we can define the affine nilCoxeter algebra $\mathbb{A}_0$ as the associative $\mathbb{Z}$-algebra with generators $A_i$ for $i \in \Iaf$ and relations $A_i^2 = 0$ for all $i \in \Iaf$, and $(A_iA_j)^{m(i,j)} = 1$, where $m(i,j)$ is as in the definition of the affine Weyl group.  Since the braid relations are the same as for the affine Weyl group, given any $w \in \Waf$ and $i_1 i_2 \cdots i_\ell \in R(w)$, the element $A_w = A_{i_1} A_{i_2} \cdots A_{i_\ell} \in \mathbb{A}_0$ is well-defined.  We write $\mathbb{A}_0^B$ (resp. $\mathbb{A}_0^D$) for the nilCoxeter algebra of type $B$ (resp. $D$) when we want to refer to a specific type.

\subsection{The nilHecke algebra} Let $S = $Sym($\Paf$), the symmetric algebra generated by the affine weight lattice.  Peterson's affine nilHecke algebra $\mathbb{A}$ is the associative $\mathbb{Z}$-algebra generated by $S$ and the nilCoxeter algebra $\mathbb{A}_0$, with commutation relation
\begin{equation}
A_i \lambda = (s_i \cdot \lambda)A_i + \langle \coroot{i}, \lambda \rangle 1 \qquad \textrm{ for } i \in \Iaf \textrm{ and } \lambda \in \Paf.
\end{equation}

There is a coproduct on $\mathbb{A}$ given by 
\begin{align}
\Delta(A_i) &= A_i \otimes 1 + 1 \otimes A_i - A_i \otimes \alpha_i A_i\\
\Delta(s) &= s \otimes 1
\end{align}

Let $\phi_0: S \to \mathbb{Z}$ be given by $\phi_0(s) = $ evaluation of $s$ at 0.  By abuse of notation, let $\phi_0:\mathbb{A} \to \mathbb{A}_0$ be the map given by
\begin{equation}
     \phi_0 : \sum_w a_w A_w \longrightarrow \sum_w \phi_0(a_w)A_w.
\end{equation}

Define the affine Fomin-Stanley subalgebra 
\begin{equation}
\mathbb{B} = \{ a \in \mathbb{A}_0 \mid \phi_0(as) = \phi_0(s)a \textrm{ for all } s \in S \}.
\end{equation}
We can define the restriction map $\phi_0^{(2)} : \mathbb{A} \otimes_S \mathbb{A} \to \mathbb{A}_0 \otimes_{\mathbb{Z}} \mathbb{A}_0$ by
\begin{equation}
\phi_0^{(2)} \left ( \sum_{w,v \in \Waf }a_{w,v} A_w \otimes A_v \right)  = \sum_{w,v \in \Waf} \phi_0(a_{w,v})A_w \otimes A_v
\end{equation}
for $a_{w,v} \in S$.  $\mathbb{B}$ inherits the coproduct from $\mathbb{A}$ via $\phi_0^{(2)} \circ \Delta$.

Following Peterson's work \cite{pet97} as described in \cite{lss10}, there is an injective ring homomorphism $j_0: H_*(\Gr) \to \mathbb{A}_0$.  Given the coproduct inherited from $\mathbb{A}$, $j_0$ restricts to a Hopf algebra isomorphism $H_*(\Gr) \cong \mathbb{B}$.  The following theorem may be found in \cite{lam08}, Proposition 5.4 and Theorem 5.5, or \cite{lss10}, Theorem 4.6.
\begin{theorem}[\cite{pet97}, \cite{lam08},\cite{lss10}]
\label{theorem: j_0 is isomorphism}
There exists a Hopf algebra isomorphism 
$$
j_0: H_*(Gr_G) \to \mathbb{B}
$$ 
such that for all $w \in \Waf^0$, $j_0(\xi_w)$ is the unique element of $\mathbb{B} \cap (A_w + \sum_{u \in \Waf \setminus \Waf^0} \mathbb{Z}A_u)$.
\end{theorem}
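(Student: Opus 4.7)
The plan is to obtain $j_0$ as the non-equivariant specialization of Peterson's equivariant homomorphism. Starting from the equivariant side, one constructs a natural $S$-algebra homomorphism $j: H^T_*(\Gr_G) \to \mathbb{A}$ whose image is the Peterson centralizer subalgebra $\mathbb{B}^T = Z_{\mathbb{A}}(S)$ and which sends the equivariant Schubert class $\xi_w^T$ (for $w \in \Waf^0$) to a distinguished element $j(\xi_w^T) = A_w + \sum_{u \in \Waf, u \neq w} a_{w,u} A_u$ with $a_{w,u} \in S$. Peterson proved that $j$ is an isomorphism of Hopf algebras over $S$; the standard references (e.g.\ Lam's Proposition 5.4 / Theorem 5.5 in \cite{lam08}) carry this out. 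Applying $\phi_0^{(2)} \circ (j \otimes 1)$ and observing that $\phi_0$ sends $Z_\mathbb{A}(S)$ into the definition of $\mathbb{B}$ then yields the candidate map $j_0: H_*(\Gr_G) \to \mathbb{B}$.

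Next I would verify that $j_0$ is well-defined and a Hopf algebra homomorphism. Functoriality of specialization at the augmentation ideal of $S$ takes the coproduct $\Delta$ on $\mathbb{A}$ (which $j$ intertwines with the coproduct on $H^T_*(\Gr_G)$) to $\phi_0^{(2)} \circ \Delta$, which by construction is the coproduct on $\mathbb{B}$. Combined with the preservation of the product, this gives the Hopf algebra structure. The image lies in $\mathbb{B}$ because elements of $Z_\mathbb{A}(S)$ satisfy $as = (s \cdot a)$ modulo the relation that becomes $\phi_0(as) = \phi_0(s) a$ after setting the equivariant parameters to zero.

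To establish the explicit characterization, I would examine what happens to $j(\xi_w^T)$ under $\phi_0$. Since $w \in \Waf^0$, the expansion $j(\xi_w^T) = A_w + \sum_{u} a_{w,u} A_u$ has the property that for every $u \in \Waf^0$ with $u \neq w$, one has $a_{w,u} \in S_{>0}$ (belongs to the augmentation ideal), so those terms are killed by $\phi_0$. Thus $\phi_0(j(\xi_w^T)) \in A_w + \sum_{u \notin \Waf^0} \mathbb{Z} A_u$, as claimed. The uniqueness comes from a standard leading-term argument: if $b, b' \in \mathbb{B}$ both have the form $A_w + (\textrm{non-Grassmannian tail})$, then $b - b' \in \mathbb{B}$ is supported entirely on $\{A_u : u \in \Waf \setminus \Waf^0\}$; combined with the fact that the projection $\mathbb{B} \to \bigoplus_{v \in \Waf^0} \mathbb{Z} A_v$ (reading off only the Grassmannian coefficients) is injective, this forces $b = b'$.

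The main obstacle is verifying the two injectivity/basis statements invoked above: that $j$ itself is an isomorphism onto $\mathbb{B}^T$, and that the Grassmannian-coefficient projection on $\mathbb{B}$ is injective. Both of these require genuine input from Peterson's work rather than formal manipulation, but since the statement explicitly attributes the result to \cite{pet97, lam08, lss10}, I would simply cite the precise statements (namely \cite[Proposition 5.4, Theorem 5.5]{lam08} and \cite[Theorem 4.6]{lss10}) rather than reprove them, noting that the arguments apply uniformly in any affine type and in particular to the types $B$ and $D$ we care about.
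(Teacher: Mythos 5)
Your proposal is correct and follows the same route as the cited sources. Note that the paper itself does not reprove this theorem; it simply observes that the result ``may be found in \cite{lam08}, Proposition 5.4 and Theorem 5.5, or \cite{lss10}, Theorem 4.6,'' so the ``proof'' in the paper is just that attribution. Your sketch --- constructing $j_0$ as the specialization at the augmentation ideal of Peterson's equivariant homomorphism $j: H^T_*(\Gr) \to \mathbb{A}$ landing in the centralizer $Z_\mathbb{A}(S)$, checking that this specialization intertwines products and the induced coproduct $\phi_0^{(2)}\circ\Delta$, deducing the leading-term form of $j_0(\xi_w)$ from the fact that $a_{w,u}$ lies in the augmentation ideal for Grassmannian $u \neq w$, and getting uniqueness from injectivity of the Grassmannian-coefficient projection on $\mathbb{B}$ --- is precisely the argument in those references, and you correctly identify the two genuinely nontrivial inputs (that $j$ is an isomorphism onto $\mathbb{B}^T$, and that the Grassmannian projection is injective on $\mathbb{B}$) and defer them, as the paper does, to \cite{pet97,lam08,lss10}. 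Nothing is missing and nothing contradicts the paper; you have essentially unpacked the citation.
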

We also have the following:
\begin{theorem}[\cite{ls07}, Theorem 6.3]
\label{theorem: homology pieri rule}
Let $j_w^u$ be defined by $j_0(\xi_w) = \sum_{\stackrel{u \in \Waf}{\ell(u) = \ell(w)}} j_w^u A_u$ for $w \in \Wgrass, u \in \Waf$.  Then for $x,z \in \Wgrass$, $\xi_x \xi_z = \sum_{y} j_x^y \xi_{yz}$ where the sum is over $y \in \Waf$ such that $yz \in \Wgrass$ and $\ell(yz) = \ell(y) + \ell(z)$.
\end{theorem}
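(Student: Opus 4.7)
The plan is to transfer the desired identity from $H_*(\Gr)$ to the affine Fomin--Stanley subalgebra $\mathbb{B} \subset \mathbb{A}_0$ via the ring isomorphism $j_0$ of Theorem \ref{theorem: j_0 is isomorphism}, and then use the uniqueness characterization of $j_0(\xi_w)$ in that theorem to extract Schubert structure constants by reading off Grassmannian-indexed coefficients in the nilCoxeter basis.

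First I would exploit that $j_0$ is a \emph{ring} isomorphism, so $j_0(\xi_x \xi_z) = j_0(\xi_x)\, j_0(\xi_z)$. Expanding $j_0(\xi_x) = \sum_y j_x^y A_y$ reduces the problem to understanding the left action of each $A_y$ on $j_0(\xi_z)$, which by Theorem \ref{theorem: j_0 is isomorphism} has the form $j_0(\xi_z) = A_z + \sum_{u \in \Waf \setminus \Wgrass} c_u A_u$. Using the nilCoxeter product rule $A_y A_u = A_{yu}$ when $\ell(yu)=\ell(y)+\ell(u)$ and $0$ otherwise, I would expand the product as a sum of $A_{yu}$'s with various coefficients.

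The key step, and the one I expect to be the main obstacle, is a short Coxeter-theoretic lemma: if $u \notin \Wgrass$ and $\ell(yu) = \ell(y) + \ell(u)$, then $yu \notin \Wgrass$. The idea is that a reduced word for $u$ must end in some $s_i$ with $i \in I$, and length-additivity propagates that tail onto a reduced word for $yu$, so $yu \cdot s_i < yu$ and $yu$ is not $0$-Grassmannian. Combined with the product rule, this shows that the only contribution to the $A_w$-coefficient of $A_y \cdot j_0(\xi_z)$ with $w \in \Wgrass$ comes from the leading term $A_y A_z$, which equals $A_{yz}$ precisely when $yz \in \Wgrass$ and $\ell(yz) = \ell(y) + \ell(z)$, and vanishes otherwise.

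Granting this, for every $w \in \Wgrass$ the coefficient of $A_w$ in $j_0(\xi_x \xi_z)$ is
$$
\sum_{\substack{y \in \Waf \\ yz = w,\; \ell(y)+\ell(z)=\ell(w)}} j_x^y.
$$
To finish, I would expand $\xi_x \xi_z = \sum_{w \in \Wgrass} c^w \xi_w$ in the Schubert basis and apply $j_0$. By the uniqueness clause of Theorem \ref{theorem: j_0 is isomorphism}, $j_0(\xi_w) \in A_w + \sum_{u \notin \Wgrass} \mathbb{Z} A_u$, so matching coefficients of $A_w$ for each Grassmannian $w$ identifies $c^w$ with the displayed sum. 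Reindexing by $y$ rather than by $w=yz$ yields the stated formula $\xi_x \xi_z = \sum_y j_x^y\, \xi_{yz}$ with the indicated length-additive range of summation.
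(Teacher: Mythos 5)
Your proof is correct, and since the paper states this theorem as a citation to \cite{ls07} without reproducing an argument, there is no in-paper proof to compare against. What you have reconstructed is the standard argument in Peterson's nilHecke framework, and each step is sound: the ring-isomorphism property of $j_0$; the nilCoxeter product rule $A_y A_u = A_{yu}$ if $\ell(yu)=\ell(y)+\ell(u)$ and $0$ otherwise; your descent-propagation lemma (valid because a right descent $s_i$, $i\in I$, of $u$ becomes a right descent of $yu$ when the product is length-additive, so $yu \notin \Wgrass$); and the leading-term characterization $j_0(\xi_w) \in A_w + \sum_{u\notin\Wgrass}\mathbb{Z}A_u$, which lets you read off the Schubert structure constant $c^w$ as the coefficient of $A_w$ for $w \in \Wgrass$. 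Matching these coefficients and reindexing by $y = wz^{-1}$ gives exactly the stated Pieri expansion.
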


$\mathbb{B}$ is a commutative algebra, and has a basis given by $j_0(\xi_w)$, for $w \in \Wgrass$.  Define $\specialgen{w} := j_0(\xi_w), \textrm{ for } w \in \Wgrass$.  Lemma \ref{lss10 lemma 4.7} helps to compute the elements $\specialgen{w}$.  

Suppose $w \gtrdot v$ in $\Waf$.  Then $s = v^{-1}w$ is a reflection, and we can write $s = u s_i u^{-1}$ for some simple reflection $s_i$, $i \in \Iaf$.  Let $u$ be shortest such that $\alpha = u (\alpha_i)$ is a positive real root.  Denote this root $\alpha$ by $\alpha_{vw}$ and its associated coroot, $u( \alpha_i^\vee)$, by $\asscovercoroot$.  We call $\asscovercoroot$ the associated cover coroot of $v$ and $w$.

\begin{lemma}[\cite{lss10},Lemma 4.7].
\label{lss10 lemma 4.7} 
Let $a = \sum_{w \in W_{\mathrm{af}}} c_w A_w \in \mathbb{A}_0$ with $c_w \in \mathbb{Z}$.  Then $a \in \mathbb{B}$ if and only if $\sum_{w \gtrdot v} c_w \asscovercoroot \in \mathbb{Z}K$ for all $v \in W_{\mathrm{af}}$.
\end{lemma}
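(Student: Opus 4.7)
The plan is to translate the defining condition of $\mathbb{B}$ into a linear condition on the coefficients $c_w$ via an explicit commutation computation in $\mathbb{A}$. First I would reduce the test set: the condition $\phi_0(as) = \phi_0(s) a$ holds trivially for scalar $s$ and extends multiplicatively (if it holds for $s$ in the augmentation ideal of $S$, the general case follows), so it suffices to impose $\phi_0(a\lambda) = 0$ for $\lambda$ ranging over the linear generators $\Paf$ of $S$. The element $\delta \in \Paf$ is $W$-invariant and hence central in $\mathbb{A}$, so the condition is automatic in that direction.

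The key step is a Chevalley-type expansion, proved by induction on $\ell(w)$:
$$
A_w \lambda \;=\; (w\lambda)\, A_w \;+\; \sum_{v \lessdot w} \langle \asscovercoroot, \lambda \rangle\, A_v \;+\; R_w(\lambda),
$$
where every polynomial coefficient of $R_w(\lambda)$ lies in the augmentation ideal of $S$, so it vanishes at the origin. The induction step pushes $\lambda$ past the leftmost $A_i$ using $A_i \lambda = (s_i \lambda) A_i + \langle \coroot{i}, \lambda \rangle$, then applies the inductive hypothesis; the identification of the length-$(\ell(w)-1)$ constant coefficients with $\langle \asscovercoroot, \lambda \rangle$ comes from the definition of the cover coroot via the shortest conjugation $v^{-1}w = u s_i u^{-1}$ with $u(\alpha_i)$ positive. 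Applying $\phi_0$ annihilates both the top term and $R_w(\lambda)$, leaving
$$
\phi_0(A_w \lambda) \;=\; \sum_{v \lessdot w} \langle \asscovercoroot, \lambda \rangle\, A_v.
$$
Summing linearly over $w$ gives
$$
\phi_0(a\lambda) \;=\; \sum_v \left\langle \sum_{w \gtrdot v} c_w \,\asscovercoroot,\; \lambda \right\rangle A_v,
$$
and since $\{A_v\}$ is a $\mathbb{Z}$-basis of $\mathbb{A}_0$, this vanishes for every relevant $\lambda \in \Paf$ if and only if each element $\sum_{w \gtrdot v} c_w \asscovercoroot \in \haf$ lies in the appropriate annihilator. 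Because real coroots have no $d$-component and $\delta$ has already been accounted for by centrality, the annihilator restricted to $\bigoplus_i \mathbb{Z}\coroot{i}$ is precisely $\mathbb{Z}K$, giving the stated condition.

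The main obstacle is the inductive Chevalley expansion: one must verify that no unexpected constant contributions appear at lengths strictly below $\ell(w) - 1$, which requires careful bookkeeping when braid relations interact with length reductions. The argument is a standard computation for affine nilHecke algebras, and rather than reproduce it I would refer the reader to the detailed treatment in \cite{lss10} or \cite{pon10}. A secondary subtlety is the precise identification of the annihilator with $\mathbb{Z}K$, which depends on the convention that $K$ is the unique (up to scalar) element of $\bigoplus_i \mathbb{Z}\coroot{i}$ pairing trivially with the level-zero slice of $\Paf$ that actually imposes constraints.
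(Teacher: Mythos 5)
Since the paper cites this result as Lemma~4.7 of \cite{lss10} without reproducing a proof, your task is effectively to reconstruct the argument, and the overall strategy you propose---reduce the test set to linear $\lambda$, commute $\lambda$ past $A_w$, apply $\phi_0$, and read off a linear condition on the $c_w$---is the standard and essentially correct approach, with the identification of the surviving constants with the cover coroots $\asscovercoroot$ being the right key computation. One small inefficiency: the remainder $R_w(\lambda)$ you introduce is identically zero for linear $\lambda$. Each time $\lambda$ is pushed past a single $A_i$ the correction $\langle\coroot{i},\cdot\rangle$ is a scalar, hence central, and produces no further corrections; the expansion $A_w\lambda = (w\lambda)A_w + \sum_{v\lessdot w}\langle\asscovercoroot,\lambda\rangle A_v$ is exact, and the bookkeeping worry about braid relations generating constants at lower length simply does not arise.

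The genuine gap is in the final step. With $S = \mathrm{Sym}(\Paf)$ and $\Paf = \bigoplus_i\mathbb{Z}\Lambda_i\oplus\mathbb{Z}\delta$ as the paper defines them, the pairing $\langle\,\cdot\,,\Lambda_j\rangle$ reads off the $\coroot{j}$-coefficient, so the radical of the pairing restricted to $\bigoplus_i\mathbb{Z}\coroot{i}$ is $\{0\}$, \emph{not} $\mathbb{Z}K$. Concretely, already in $\widetilde A_1$ the element $A_0 + A_1 = j_0(\xi_{s_0})$ satisfies the $\mathbb{Z}K$ criterion but fails $\phi_0(a\Lambda_0)=0$. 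The resolution, implicit in \cite{lam08} and \cite{lss10}, is that the test lattice must be the finite weight lattice $P=\bigoplus_{i\in I}\mathbb{Z}\omega_i$, embedded at level zero, for which $\langle K,\omega_i\rangle = a_i^\vee - a_i^\vee\langle K,\Lambda_0\rangle = 0$ and one checks directly that $\mathbb{Z}K$ is exactly the radical. Your remark about "the level-zero slice of $\Paf$ that actually imposes constraints" gestures at this phenomenon, but you present it as a convention rather than deriving it; as written, your argument would yield annihilator $\{0\}$ and hence a false statement. A complete proof must pin down that the relevant $S$ is built from $P$ (or at least from the level-zero weights) and verify the radical computation, rather than appeal to convention.
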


\section{Special orthogonal groups}

In order to prove our theorems for the special orthogonal groups, we need to develop indexing sets and Cauchy-type reproducing kernels for each case.

\subsection{Type $B$ reproducing kernel} The first of the following sets of partitions was defined in \cite{ee98} (see also \cite{bm08}).
\begin{definition}
The set of type $B_n$ affine partitions is $\Bpar = \{ \lambda \mid \lambda_1 \leq 2n-1 \textrm{ and } \lambda \textrm{ has distinct parts of size smaller than } n\}$.  We also define $\Bodd{k}$ to be the set of $k$-bounded partitions with odd parts.  We will denote the set of all $k$-bounded partitions simply by $\mathcal{P}^k$.
\end{definition}

\begin{lemma}
\label{lemma: B partitions to odd bounded partitions bijection}
There is a size-preserving bijection $\Bpar \leftrightarrow \Bodd{2n-1}$.
\end{lemma}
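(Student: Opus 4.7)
The plan is to prove the identity by computing and comparing the generating functions of the two sides, which gives the existence of a bijection, and then to exhibit an explicit Glaisher-style bijection for concreteness.

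First I would write down the two generating functions. Since a partition $\lambda \in \Bpar$ consists of distinct parts from $\{1,2,\ldots,n-1\}$ together with freely repeatable parts from $\{n,n+1,\ldots,2n-1\}$, its size generating function is
$$G_{\Bpar}(q) = \prod_{i=1}^{n-1}(1+q^i)\prod_{i=n}^{2n-1}\frac{1}{1-q^i},$$
while for $\Bodd{2n-1}$, whose elements are multisets of odd numbers from $\{1,3,\ldots,2n-1\}$,
$$G_{\Bodd{2n-1}}(q) = \prod_{j=1}^{n}\frac{1}{1-q^{2j-1}}.$$
The equality $G_{\Bpar}(q)=G_{\Bodd{2n-1}}(q)$ then follows by the standard manipulation $1+q^i=(1-q^{2i})/(1-q^i)$: one rewrites $G_{\Bpar}(q)$ as $\prod_{i=1}^{n-1}(1-q^{2i})/\prod_{i=1}^{2n-1}(1-q^i)$, splits the denominator into its odd and even parts as $\prod_{j=1}^{n}(1-q^{2j-1})\cdot \prod_{k=1}^{n-1}(1-q^{2k})$, and cancels.

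Next I would describe an explicit bijection. Define $\Phi:\Bpar\to\Bodd{2n-1}$ by replacing each part $p$ of $\lambda$, written uniquely as $p=2^am$ with $m$ odd, by $2^a$ copies of $m$. Since $m\leq p\leq 2n-1$ and $m$ is odd, $\Phi(\lambda)$ lies in $\Bodd{2n-1}$. To invert, fix an odd $m\in\{1,3,\ldots,2n-1\}$ and partition the admissible exponents $\{a\geq 0:2^am\leq 2n-1\}$ into
$$A_{sm}(m)=\{a:2^am<n\},\qquad A_{lg}(m)=\{a:n\leq 2^am\leq 2n-1\}.$$
The key observation is that $|A_{lg}(m)|=1$: if $a^{\ast}$ is minimal with $2^{a^{\ast}}m\geq n$, then $2\cdot 2^{a^{\ast}}m\geq 2n>2n-1$ rules out any larger $a$, while $2^{a^{\ast}}m<2n$ (using $2^{a^{\ast}-1}m<n$ when $m<n$, or $a^{\ast}=0$ when $m\geq n$) keeps $a^{\ast}$ itself in range. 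Given $c_m\in\mathbb{Z}_{\geq 0}$, there is then a unique expression $c_m=\sum_{a\in A_{sm}(m)}\epsilon_a 2^a+n_{a^{\ast}}\,2^{a^{\ast}}$ with $\epsilon_a\in\{0,1\}$ and $n_{a^{\ast}}\in\mathbb{Z}_{\geq 0}$, namely the binary expansion of $c_m\bmod 2^{a^{\ast}}$ paired with the quotient $\lfloor c_m/2^{a^{\ast}}\rfloor$. Reading off the corresponding parts $2^am$ for each odd $m$ gives $\Phi^{-1}$.

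The generating function step is completely routine; the only point requiring care in the bijective formulation is the observation that for every odd $m\leq 2n-1$ exactly one multiple $2^am$ lies in $[n,2n-1]$, which is precisely what allows a Glaisher-style binary decomposition to convert freely between ``distinct below $n$, repeatable in $[n,2n-1]$'' and ``all odd, $\leq 2n-1$.''
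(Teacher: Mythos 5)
Your proposal is correct and its bijective core is exactly the paper's: the paper's one-line proof (``divide all even parts repeatedly until there are no even parts left'') is precisely your map $\Phi$ sending $p = 2^a m$ to $2^a$ copies of $m$. The value you add is in making the argument airtight: the generating-function identity is a sanity check the paper omits, and your inverse construction --- built on the observation that for each odd $m \leq 2n-1$ exactly one power-of-two multiple $2^{a^\ast}m$ lands in $[n,2n-1]$, so that a multiplicity $c_m$ decomposes uniquely as binary digits below $2^{a^\ast}$ plus a free quotient --- is what actually shows the Glaisher map is a bijection onto $\Bodd{2n-1}$ and not merely a well-defined map into it; the paper asserts this without proof.
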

\begin{proof}
Given a partition in $\Bpar$, the bijection is given by dividing all even parts repeatedly until there are no even parts left.  
\end{proof}

Let $\numpartsgtrn(\lambda)$ denote the number of parts of $\lambda$ larger than or equal to $n$.  Let $q_\lambda' := 2^{\numpartsgtrn(\lambda)}q_\lambda = 2^{\numpartsgtrn(\lambda)}Q_{\lambda_1}Q_{\lambda_2}\cdots$.

\begin{proposition}
\label{prop: type B homology spanning set}
A basis of $\homB$ over $\mathbb{Z}$ is given by $(q_\lambda')_{\lambda \in \Bpar}$.
\end{proposition}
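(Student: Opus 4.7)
The plan is to establish three things in turn: that each $q_\lambda'$ with $\lambda\in\Bpar$ lies in $\homB$, that these elements $\mathbb{Z}$-span $\homB$, and that they are $\mathbb{Z}$-linearly independent. The first statement is immediate from the factorization
\[
q_\lambda'\;=\;\prod_{\lambda_i<n}Q_{\lambda_i}\cdot\prod_{\lambda_i\geq n}(2Q_{\lambda_i}),
\]
which displays $q_\lambda'$ as a monomial in the prescribed generators of $\homB$.

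For the spanning step I would start from an arbitrary element of $\homB$, written as a $\mathbb{Z}$-linear combination of monomials in the generators. Each such monomial is $q_\lambda'$ for some partition $\lambda$ with $\lambda_1\leq 2n-1$. If $\lambda\in\Bpar$ there is nothing more to do; otherwise $\lambda$ has a repeated part $i<n$, and I would factor $q_\lambda'=Q_i^2\cdot q_{\lambda\setminus(i,i)}'$ (legitimate because removing two copies of a part below $n$ preserves $p_{\geq n}$). Applying the Macdonald identity
\[
Q_i^2 \;=\; 2\sum_{j=1}^{i}(-1)^{j-1}Q_{i-j}Q_{i+j}
\]
keeps all indices inside the range $\{0,1,\ldots,2n-1\}$ since $i+j\leq 2i\leq 2n-2$. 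A short bookkeeping argument then shows that each term $2(-1)^{j-1}Q_{i-j}Q_{i+j}\cdot q_{\lambda\setminus(i,i)}'$ equals $(-1)^{j-1}\varepsilon_j\,q_{\mu_j}'$, where $\mu_j=(\lambda\setminus(i,i))\cup\{i-j,i+j\}$ and $\varepsilon_j=1$ when $i+j\geq n$ (the extra $2$ from Macdonald is absorbed into the $2^{p_{\geq n}(\mu_j)}$ normalization) or $\varepsilon_j=2$ when $i+j<n$. So the rewrite is manifestly integral. For termination I would observe that $\mu_j$ strictly dominates $\lambda$ in dominance order, since unpairing $(i,i)$ into $(i+j,i-j)$ with $j\geq 1$ strictly raises the partial-sum sequence; iterating inside the finite poset of partitions of the fixed size $|\lambda|$ with parts bounded by $2n-1$ must terminate, necessarily at a $\mathbb{Z}$-combination of $q_\mu'$ with $\mu\in\Bpar$.

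For linear independence I would work over $\mathbb{Q}$ via a Hilbert series comparison. The squaring relation implies $\Gamma_*\otimes\mathbb{Q}=\mathbb{Q}[Q_1,Q_3,Q_5,\ldots]$ is a polynomial ring in the odd-indexed generators, so $\homB\otimes\mathbb{Q}=\mathbb{Q}[Q_1,Q_3,\ldots,Q_{2n-1}]$ is a polynomial ring in $n$ generators, with Hilbert series $\prod_{k=0}^{n-1}(1-t^{2k+1})^{-1}$. Lemma~\ref{lemma: B partitions to odd bounded partitions bijection} supplies a size-preserving bijection $\Bpar\leftrightarrow\Bodd{2n-1}$, whose generating function is the same product. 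Combined with the spanning statement above, this equality of graded dimensions forces $\{q_\lambda'\}_{\lambda\in\Bpar}$ to be a $\mathbb{Q}$-basis of $\homB\otimes\mathbb{Q}$, and $\mathbb{Z}$-linear independence follows automatically.

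The main obstacle is the integrality bookkeeping in the spanning step: each invocation of the squaring identity produces a factor of $2$, and one has to track whether that $2$ is swallowed by the normalization $2^{p_{\geq n}}$ (when $i+j\geq n$) or survives as an explicit integer coefficient (when $i+j<n$). Getting this accounting right is precisely what distinguishes the $\mathbb{Z}$-statement from the much softer $\mathbb{Q}$-statement and forms the technical heart of the proof.
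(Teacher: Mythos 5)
Your proof is correct and takes essentially the same approach as the paper: both establish spanning via the Macdonald squaring relation and establish the dimension count using the bijection $\Bpar\leftrightarrow\Bodd{2n-1}$ together with the algebraic independence of the odd $Q_i$ (Macdonald III.8.6). The only difference is one of detail --- you make the spanning step rigorous via a dominance-order termination argument with explicit integrality bookkeeping where the paper simply asserts it is ``clear,'' and you phrase the independence count as a Hilbert-series comparison rather than the paper's degree-by-degree lower-bound argument.
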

\begin{proof}
Using the relations \eqref{Q relations}, it's clear that the $q_\lambda'$, $\lambda \in \Bpar$, span $\homB$.  By \cite[III.8.6]{mac95}, $(q_\lambda)$ for odd partitions $\lambda$ form a $\mathbb{Q}$-basis of $\Gamma_*$.  Let $\homB[k]$ be the graded part in $\homB$ of degree $k$.  Since $(q_\lambda)_{\lambda \in \Bodd{2n-1}}$ are in $\homB$, the dimension of $\homB[k]$ is at least $|\{ \lambda \in \Bodd{2n-1} \mid \lambda \vdash k\}|$.  By Lemma \ref{lemma: B partitions to odd bounded partitions bijection}, this is the same as the number of $\lambda \in \Bpar$ such that $\lambda \vdash k$; therefore, the $(q_\lambda')_{\lambda \in \Bpar}$ must be linearly independent.
\end{proof}

Let $(R_\lambda)_{\lambda \in \Bpar}$ be the basis of $\cohomB$ dual to $(q_\lambda')$ under $[\cdot, \cdot]$.  By \cite{mac95}, we have that $[q_\lambda, P_\mu] = 0$ if $\lambda > \mu$, so $R_\lambda$ is triangularly related to $(P_\lambda)_{\lambda \in \Bpar}$, which is triangularly related to the monomial symmetric functions, $m_\lambda$.  Since $q_\lambda$ and $m_\lambda$ are dual under $[\cdot, \cdot]$, we have that the coefficient of $m_\lambda$ in $R_\mu$ for $\lambda \in \Bpar$ is given by $2^{-\numpartsgtrn(\lambda)} \delta_{\lambda \mu}$.  By the duality of $q_\lambda$ and $m_\lambda$, it is clear that $\cohomB \subset \Gamma^* / \langle m_\lambda \mid \lambda_1 \geq 2n \rangle$.

\begin{proposition}
\label{type B Cauchy kernel}
Let $(w_\lambda)$ and $(t_\lambda)$ be two bases of $\homB$ and $\cohomB$, respectively, indexed by $\Bpar$.   Let $I^{(k)}$ be the ideal generated by $y_i^{k+1}$ for all $i$.  Then 
\begin{equation}
\Omega_{-1} \mod I^{(2n-1)} = \sum_{\lambda \in \Bpar} w_\lambda[X] t_\lambda[Y]
\end{equation}
if and only if $[w_{\lambda}, t_\mu] = \delta_{\lambda \mu}$ for all $\lambda, \mu \in \Bpar$.
\end{proposition}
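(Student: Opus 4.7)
The plan is to establish the equivalence by passing through one distinguished pair of dual bases and then applying a short linear-algebra argument that covers both directions simultaneously. Starting from the identity
\begin{equation*}
\Omega_{-1} = \sum_{\lambda \in \mathcal{P}} q_\lambda[X]\, m_\lambda[Y]
\end{equation*}
quoted in the excerpt, I would first observe that a monomial symmetric function $m_\lambda[Y]$ lies in $I^{(2n-1)}$ precisely when $\lambda_1 \geq 2n$, so reducing modulo $I^{(2n-1)}$ truncates the sum to those $\lambda$ with $\lambda_1 \leq 2n-1$. Each such $q_\lambda$ is an element of $\homB$, and by Proposition~\ref{prop: type B homology spanning set} it has a unique expansion $q_\lambda = \sum_{\mu \in \Bpar} a_{\lambda\mu}\, q_\mu'$. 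Collecting terms yields
\begin{equation*}
\Omega_{-1} \mod I^{(2n-1)} = \sum_{\mu \in \Bpar} q_\mu'[X]\, R_\mu[Y], \qquad R_\mu[Y] := \sum_{\lambda:\,\lambda_1 \leq 2n-1} a_{\lambda\mu}\, m_\lambda[Y].
\end{equation*}

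Next, I would verify that the coefficients $R_\mu[Y]$ so produced are exactly the basis of $\cohomB$ dual to $(q_\mu')$. Using the duality $[q_\alpha, m_\beta] = \delta_{\alpha\beta}$ (read off from the Macdonald expansion of $\Omega_{-1}$) together with the fact that for $\nu \in \Bpar$ one has $q_\nu' = 2^{\numpartsgtrn(\nu)} q_\nu$, and hence $a_{\nu\mu} = 2^{-\numpartsgtrn(\nu)}\delta_{\nu\mu}$, one computes
\begin{equation*}
[q_\nu', R_\mu] = 2^{\numpartsgtrn(\nu)} a_{\nu\mu} = \delta_{\nu\mu}
\end{equation*}
for all $\nu, \mu \in \Bpar$. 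So $(R_\mu)_{\mu \in \Bpar}$ coincides with the dual basis introduced just before the proposition, and the proposition is established for this distinguished pair $(q_\mu', R_\mu)$.

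To reach an arbitrary pair of bases, I would write $w_\lambda = \sum_\nu M_{\lambda\nu}\, q_\nu'$ and $t_\lambda = \sum_\nu N_{\lambda\nu}\, R_\nu$ with $M, N$ invertible matrices. A direct computation shows that the Cauchy-kernel identity $\sum_\lambda w_\lambda[X]\, t_\lambda[Y] = \sum_\mu q_\mu'[X]\, R_\mu[Y]$ is equivalent to $M^T N = I$, while the orthogonality $[w_\lambda, t_\mu] = \delta_{\lambda\mu}$ is equivalent to $M N^T = I$. Both conditions are equivalent to $N = (M^T)^{-1}$, which proves the two directions of the proposition simultaneously.

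The main obstacle I anticipate is conceptual rather than computational: $\{q_\lambda\}_{\lambda \in \mathcal{P}}$ is not linearly independent in $\Gamma_*$ because of the relations~\eqref{Q relations}, so Macdonald's expansion of $\Omega_{-1}$ is an identity among symmetric functions but not yet a biorthogonal expansion in the desired sense. Cutting the sum down to the $\Bpar$-indexed basis and checking that the collected $Y$-side coefficients really are the intended dual basis $(R_\mu)$ is precisely what turns the redundant Macdonald expansion into the pairing-kernel identity of the proposition.
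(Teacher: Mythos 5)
Your proposal is correct and takes essentially the same approach as the paper's proof: both expand $w_\lambda$ and $t_\lambda$ in the distinguished dual pair $(q'_\rho, R_\sigma)$ and reduce the equivalence to the observation that the two resulting matrix conditions ($M^T N = I$ and $M N^T = I$ in your notation) are equivalent. Your version is slightly more explicit in isolating the kernel identity for the distinguished pair as a separate step before the change-of-basis argument, which is a clarifying reorganization but not a different method.
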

\begin{proof}
We can write $w_\lambda = \sum_{\rho \in \Bpar} a_{\lambda \rho} q_\rho'$ and $t_\mu = \sum_{\sigma \in \Bpar} b_{\mu \sigma} R_\sigma$, where $a_{\lambda \rho}, b_{\mu \sigma} \in \mathbb{Q}$.  We therefore have
$$
[w_\lambda, t_\mu] = \delta_{\lambda \mu} \iff [  \sum_{\rho \in \Bpar} a_{\lambda \rho} q_\rho',  \sum_{\sigma \in \Bpar} b_{\mu \sigma} R_\sigma] = \delta_{\lambda \mu} \iff \sum_{\psi \in \Bpar} a_{\lambda \psi} b_{\mu \psi} = \delta_{\lambda \mu}
$$

In other words, $(a_{\lambda \rho})(b_{\mu \sigma})^T = I$, so $(b_{\mu \sigma})^T = (a_{\lambda \rho})^{-1}$.\\

On the other hand, suppose
\begin{align*}
\Omega_{-1} \mod I^{(2n-1)} &= \sum_{\lambda \in \Bpar} w_\lambda(x) t_\lambda(y) \\
\sum_{\psi \in \mathcal{P}} q_\psi m_\psi \mod I^{(2n-1)} &= \sum_{\lambda \in \Bpar}  (\sum_{\rho \in \Bpar} a_{\lambda \rho} q_\rho'(x))( \sum_{\sigma \in \Bpar} b_{\mu \sigma} R_\sigma(y)) \\
\sum_{\psi \in \mathcal{P}^{2n-1}} q_\psi m_\psi &= \sum_{\rho, \sigma \in \Bpar} (\sum_{\lambda \in \Bpar} a_{\lambda \rho}b_{\lambda \sigma}) q_{\rho}'(x)R_{\sigma}(y)\\
\end{align*}
By looking at the coefficient of $q_\lambda m_\mu$ for $\lambda, \mu \in \Bpar$ on each side, we see that we must have $\sum_{\lambda \in \Bpar} a_{\lambda \rho}b_{\lambda \sigma} = \delta_{\rho \sigma}$, i.e., $(a_{\lambda \rho})^T = (b_{\mu \sigma})^{-1}$.  The proposition follows.

\end{proof}

We now set 
\begin{align}
\repkerB  :=& \, \Omega_{-1} \mod I^{(2n-1)}\\
= & \sum_{\lambda_1 \leq 2n-1} 2^{\numpartsgtrn(\lambda)} Q_{\lambda_1}[X] Q_{\lambda_2}[X] \cdots 2^{-\numpartsgtrn(\lambda)} m_\lambda[Y],
\end{align}
so that $\repkerB$ is the reproducing kernel for the pairing $[ \cdot, \cdot]:\homB \times \cohomB \to \mathbb{Z}$.

\subsection{Type $D$ reproducing kernel}

The following analogs exist in type $D$ (again, see \cite{ee98,bm08}):

\begin{definition}
The set of type $D_n$ affine (colored) partitions is given by $\Dpar = \{ \lambda \mid \lambda_1 \leq 2n-2 \textrm{ and } \lambda \textrm{ has distinct parts of size smaller than } n\}$, with the additional information of a color, $b$ (blue) or $c$ (crimson), associated to each partition.
\end{definition}

\begin{lemma}
\label{lemma: type D partitions to odd and n-1 parts bijection}
If $n$ is odd, there is a size-preserving bijection between $\Dpar$ and the set of (uncolored) partitions $\mathcal{P}_{\textrm{odd},n-1}^{2n-2} := \{ \lambda \mid \lambda_1 < 2n-2, \lambda_i \textrm{ is odd or } \lambda_i = n-1 \textrm{ for all } i\}$.
\end{lemma}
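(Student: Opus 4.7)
The plan is to mirror the one-line argument of Lemma~\ref{lemma: B partitions to odd bounded partitions bijection} (the type $B$ case), adjusting for two features of the type $D$ setting: a target partition may contain parts equal to $n-1$, and $\Dpar$ carries coloring data. Given $\mu \in \mathcal{P}_{\textrm{odd},n-1}^{2n-2}$, I first let $k$ be the multiplicity of $n-1$ in $\mu$ and let $\mu'$ be the sub-partition of odd parts, which satisfies $\mu'_1 \leq 2n-3$ since $\mu_1 < 2n-2$. I then apply the type-$B$ combining procedure to $\mu'$: for each odd $m$, let $a^*$ be the largest integer with $2^{a^*} m < n$, and write the multiplicity $k_m$ of $m$ in $\mu'$ uniquely as $k_m = \sum_{a=0}^{a^*} \epsilon_a 2^a + 2^{a^*+1} \mu^{(m)}$, with $\epsilon_a \in \{0,1\}$ and $\mu^{(m)} \geq 0$. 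Place a single part of size $2^a m$ whenever $\epsilon_a = 1$ (for $a \leq a^*$), together with $\mu^{(m)}$ copies of $2^{a^*+1} m$. Collecting over all odd $m$ produces a partition $\lambda$ with distinct parts of size less than $n$ and $\lambda_1 \leq 2n-2$, so $\lambda$ lies in the uncolored partition set underlying $\Dpar$. Pairing $\lambda$ with the color encoding $k$ yields the image in $\Dpar$.

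For the inverse, I read $k$ off the color of $(\lambda, c) \in \Dpar$ and repeatedly split each even part of $\lambda$ into two equal halves until no even parts remain, producing an odd partition $\mu'$ with $\mu'_1 \leq 2n-3$. Adjoining $k$ copies of $n-1$ then yields $\mu \in \mathcal{P}_{\textrm{odd},n-1}^{2n-2}$. Size preservation is immediate because splitting and combining preserve the total, and the $(n-1)$ parts contribute $k(n-1)$ on both sides. That the two maps are mutual inverses reduces to uniqueness of the mixed-base expansion of each $k_m$, verified exactly as in the type $B$ case: the digits $\epsilon_a$ with $a \leq a^*$ are forced by $k_m \bmod 2^{a^*+1}$, after which the tail $\mu^{(m)}$ is determined.

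The main subtlety, and the place where the hypothesis that $n$ is odd enters, is the disambiguation of two possible sources of a part of size $n-1$ in $\lambda$. Since $n-1$ is even when $n$ is odd, we can write $n-1 = 2^{a'} m$ for some odd $m < n$ and $a' \geq 1$, so a part of size $n-1$ in $\lambda$ could arise either from doubling copies of $m$ (captured by the Euler combining applied to $\mu'$) or from a genuine $(n-1)$-part of $\mu$ (captured by $k$). Recording $k$ through the coloring of $\Dpar$ is precisely what keeps these two contributions separate; the technical heart of the argument is to check that this separation matches the coloring conventions of $\Dpar$, so that the pair $(\lambda, k)$ traverses $\Dpar$ exactly once as $\mu$ ranges over $\mathcal{P}_{\textrm{odd},n-1}^{2n-2}$.
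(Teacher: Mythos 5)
There are two concrete problems with your map, and both stem from how you handle the $(n-1)$-parts of $\mu$. First, it is not size-preserving: you strip the $k$ copies of $n-1$ out of $\mu$, form $\lambda$ by combining only the odd remainder $\mu'$, and then store the number $k$ in the color. But then $|\lambda| = |\mu| - k(n-1)$, and a color carries no size, so the total drops whenever $k>0$. The claim that ``the $(n-1)$ parts contribute $k(n-1)$ on both sides'' is precisely what fails: on the $\Dpar$ side they contribute nothing, because they have been moved from the partition into a label. Second, the color in $\Dpar$ is a single binary datum ($b$ or $c$), whereas $k$ ranges over all non-negative integers (the definition of $\mathcal{P}_{\textrm{odd},n-1}^{2n-2}$ imposes no bound on the multiplicity of $n-1$), so ``pairing $\lambda$ with the color encoding $k$'' is not a well-defined assignment.

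The paper's bijection avoids both issues by never deleting the $n-1$ parts. Its color is not a container for numerical data but a switch between two splitting rules on $\Dpar$: a blue partition is split exactly as in type $B$ (halve every even part until only odd parts remain), while a crimson partition is split the same way except that the halving of a part is halted once it reaches $n-1$. Nothing is removed, so size is preserved for free, and a single bit suffices because it chooses a rule rather than recording a multiplicity. Your observation that $n-1$ is even precisely because $n$ is odd, so that $n-1$ can occur on the halving chain of a larger part, does identify where the hypothesis enters; but the correct way to exploit it is to treat $n-1$ as an optional stopping value for the splitting, not to peel off the $n-1$ parts and stash them in the color.
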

\begin{proof}
This is similar to the type $B$ bijection, except we only act on one color: say, if a partition is colored $b$, split up all even parts as in the type $B$ case and if colored $c$, then leave parts of size $n-1$ alone.
\end{proof}

Note that if we split up all partitions regardless of color, then we get a 2-to-1 map from $\Dpar$ onto $\mathcal{P}_{\textrm{odd}}^{2n-2}$.  Let $\Dparonecolor$ be the type $D$ partitions of color $b$.  The following proposition is very similar the same as the type $B$ case.

\begin{proposition}
\label{prop: type D homology spanning set}
A basis of $\homD$ over $\mathbb{Z}$ is given by $(q_\lambda')_{\lambda \in \Dparonecolor}$.
\end{proposition}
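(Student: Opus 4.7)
The plan is to mirror the proof of Proposition \ref{prop: type B homology spanning set} with the obvious modifications --- upper bound $2n-2$ (rather than $2n-1$) on parts, and restriction to a single color.

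For the spanning statement, I would apply the quadratic relation \eqref{Q relations},
\[
Q_i^2 = 2(Q_{i-1}Q_{i+1} - Q_{i-2}Q_{i+2} + \cdots \pm Q_0 Q_{2i}),
\]
iteratively to any monomial in the generators of $\homD$. Each application trades a repeated part $<n$ for parts of strictly larger maximum; since only parts $<n$ need to be made distinct (parts $\geq n$ are allowed to repeat in $\Dpar$) and the largest new part produced is $2(n-1) = 2n-2$, the process never invokes a generator outside $\{Q_1,\ldots,Q_{n-1},2Q_n,\ldots,2Q_{2n-2}\}$. The factors of $2$ introduced by the relation combine with those baked into the generators $2Q_k$ for $k\ge n$ to give exactly the coefficient $2^{\numpartsgtrn(\lambda)}$ appearing in $q_\lambda'$, so the result is a $\mathbb{Z}$-linear combination of $q_\lambda'$ with $\lambda \in \Dparonecolor$.

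For linear independence, I would use a graded dimension count, exactly as in the type $B$ case. By \cite[III.8.6]{mac95}, the $q_\lambda$ over odd partitions $\lambda$ form a $\mathbb{Q}$-basis of $\Gamma_*$, so the subset $(q_\lambda)_{\lambda \in \mathcal{P}_{\mathrm{odd}}^{2n-2}}$ is $\mathbb{Q}$-linearly independent inside $\homD$ and yields the lower bound
\[
\dim_{\mathbb{Q}}(\homD \otimes \mathbb{Q})[k] \;\geq\; \big| \{ \lambda \in \mathcal{P}_{\mathrm{odd}}^{2n-2} : |\lambda|=k \} \big|.
\]
The remark immediately preceding the proposition records that ``splitting up all partitions regardless of color'' is a $2$-to-$1$ surjection $\Dpar \twoheadrightarrow \mathcal{P}_{\mathrm{odd}}^{2n-2}$; since the two fibers over each odd partition consist of one partition of each color, restriction to color $b$ produces a bijection $\Dparonecolor \leftrightarrow \mathcal{P}_{\mathrm{odd}}^{2n-2}$. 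The spanning set therefore has exactly the right cardinality in each graded piece, which forces the $(q_\lambda')_{\lambda \in \Dparonecolor}$ to be $\mathbb{Z}$-linearly independent, hence a $\mathbb{Z}$-basis.

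The only real obstacle is the bookkeeping in the spanning step --- verifying that all coefficients produced by the reduction are genuinely integral and that no generator outside the stated list is ever required. This is essentially identical to the type $B$ argument and rests on the two observations already noted: that parts $\geq n$ need not be separated, so \eqref{Q relations} is only ever applied to $Q_i^2$ with $i<n$ (keeping all new parts $\leq 2n-2$); and that each factor of $2$ from \eqref{Q relations} either lands on a part $\geq n$ (where it is absorbed into a generator $2Q_k$ and accounted for by $2^{\numpartsgtrn(\lambda)}$) or is carried forward as an honest integer coefficient of a $q_\lambda'$ with $\lambda \in \Dparonecolor$.
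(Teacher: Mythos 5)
Your proposal is correct and follows the same approach the paper implicitly takes: the paper's proof of this proposition is literally ``Similar to Proposition \ref{prop: type B homology spanning set},'' and your argument is exactly the type $B$ proof (spanning via the quadratic relations \eqref{Q relations}, linear independence via a graded dimension count against odd-part partitions) carried over with the correct substitutions of $2n-2$ for $2n-1$ and $\Dparonecolor$ for $\Bpar$, using the remark that the ``forget color and split'' map $\Dpar \to \mathcal{P}_{\mathrm{odd}}^{2n-2}$ is $2$-to-$1$ to obtain the needed size-preserving bijection.
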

\begin{proof}
Similar to Proposition \ref{prop: type B homology spanning set}.
\end{proof}

As in the type $B$ case, we have $\cohomD \subset \Gamma^* / \langle m_\lambda \mid \lambda_1 \geq 2n-1 \rangle$.  We also have that a reproducing kernel for the pairing $[\cdot, \cdot ]: \homD \times \cohomD \to \mathbb{Z}$ is given by
\begin{equation}
\repkerD = \sum_{\lambda_1 \leq 2n-2} 2^{\numpartsgtrn(\lambda)} Q_{\lambda_1}[X] Q_{\lambda_2}[X] \cdots 2^{-\numpartsgtrn(\lambda)} m_\lambda[Y].
\end{equation}

\subsection{Segments}
Before describing type-specific generators of $\mathbb{B}$, we borrow definitions and notation from Billey and Mitchell \cite{bm08}.

\begin{definition}
Define the sets of affine Weyl group generators:
 $$
 S = \{ s_1, s_2, \ldots, s_n\}, \quad S' = \{s_0, s_2, \ldots, s_n\}, \quad J = \{ s_2, s_3, \ldots, s_n \}.
 $$  
 
 For each $j \geq 0$, the length $j$ elements of $((\Waf)_{S'})^J$ are known as \emph{$0$-segments}, and denoted $\Sigma_0^a(j)$. In most cases, there is only one segment of a given length, in which case $a$ is omitted -- otherwise, it is used to label which segment of length $j$ is being used.  Similarly, the elements of $((\Waf)_{S})^J$ are called \emph{$1$-segments}, and are denoted $\Sigma_1^a(j)$.  Any $0$-segment or $1$-segment is known simply as a \emph{segment}.  (In \cite{bm08}, these are segments corresponding to Type II Coxeter groups).
\end{definition}

Billey and Mitchell give explicit descriptions of the segments in types $B$ and $D$, from which it follows easily that any segment is in fact in $\Z$.  In type $B$, they describe:
\begin{equation}
\Sigma_1(j) =
\begin{cases}
s_j \cdots s_3 s_2 s_1 & 1 \leq j \leq n\\
s_{2n-j} \cdots s_{n-1} s_n s_{n-1} \cdots s_3 s_2 s_1 & n < j \leq 2n-1
\end{cases}
\end{equation}
and
\begin{equation}
\Sigma_0(j) = 
\begin{cases}
s_0 & j=1\\
s_j \cdots s_3 s_2 s_0 & 1 < j \leq n\\
s_{2n-j} \cdots s_{n-1} s_n s_{n-1} \cdots s_3 s_2 s_0 & n < j \leq 2n-2\\
s_0 s_2 s_3 \cdots s_{n-1} s_n s_{n-1} \cdots s_3 s_2 s_0 & j = 2n-1.
\end{cases}
\end{equation}
In type $D$, $1$-segments are given by:
\begin{equation}
\Sigma_1^z(j) = 
\begin{cases}
s_j \cdots s_3 s_2 s_1 & 1 \leq j \leq n-2\\
s_{n-1} s_{n-2} \cdots s_3 s_2 s_1 & j = n-1 \textrm{ and } z = b\\
s_n s_{n-2} \cdots s_3 s_2 s_1 & j = n-1 \textrm{ and } z = c\\
s_{2n-j-1} \cdots s_{n-2} s_n s_{n-1} s_{n-2} \cdots s_3 s_2 s_1 & n \leq j \leq 2n-2
\end{cases}
\end{equation}
and $0$-segments are obtained from $1$-segments by interchanging $s_0$ and $s_1$ and $s_{n-1}$ and $s_n$.  There are two colors of length $n-1$ segments, and only one color for all other lengths.

It also follows from their description that if $w \in \Z \cap \Wgrass$, then $w$ is a segment (and if $w$ is $1$-Grassmannian, then $w$ is also a segment).  

\begin{theorem}[\cite{bm08}, Lemmas 3, 5]
Given $w \in \Wgrass$, where $\Waf$ is of type $B$ or $D$, $w$ has a length-decreasing factorization $r(w)$ into segments, that is, a factorization
$$
r(w) = \cdots \Sigma_0^{c_3}(\lambda_3) \Sigma_1^{c_2}(\lambda_2) \Sigma_0^{c_1}(\lambda_1)
$$
such that $\lambda_1 \geq \lambda_2 \geq \lambda_3 \geq \cdots$.
\end{theorem}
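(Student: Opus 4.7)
The plan is to proceed by induction on $\ell(w)$, peeling off the longest possible segment from the right and alternating the segment type. The base case $\ell(w) \leq 1$ is immediate: either $w = e$, giving the empty factorization, or $w = s_0 = \Sigma_0(1)$.

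For the inductive step, I would first observe that since $w \in \Wgrass = \Waf^J$ is $0$-Grassmannian, every reduced expression for $w$ ends in $s_0$. Hence the set of integers $j \geq 1$ admitting a factorization $w = v \cdot \Sigma_0^c(j)$ with $\ell(w) = \ell(v) + j$ is nonempty; let $\lambda_1$ be its maximum, and fix a factorization $w = v_1 \cdot \Sigma_0^{c_1}(\lambda_1)$. The next step is to show that $v_1$ is $1$-Grassmannian, i.e., $v_1 \in \Waf^S$: if instead some reduced expression for $v_1$ ended in $s_i$ for $i \in \{2,\ldots,n\}$, then the trailing $s_i$ could be absorbed into the leading portion of $\Sigma_0^{c_1}(\lambda_1)$ by a short sequence of commutation and braid moves, producing a $0$-segment right factor of $w$ of length strictly greater than $\lambda_1$, contradicting maximality. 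Applying the same construction iteratively to $v_1$ with the roles of $s_0$ and $s_1$ swapped produces the desired alternating factorization $w = \cdots \Sigma_1^{c_2}(\lambda_2) \Sigma_0^{c_1}(\lambda_1)$.

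The main obstacle is the length-decreasing property $\lambda_1 \geq \lambda_2 \geq \cdots$; by iterating the construction it suffices to establish $\lambda_1 \geq \lambda_2$. I would argue by contradiction: assume $\lambda_2 > \lambda_1$ and examine the right factor $\Sigma_1^{c_2}(\lambda_2) \Sigma_0^{c_1}(\lambda_1)$ of $w$. Using the explicit reduced words for the segments listed in the preceding definitions, together with the braid relation $s_2 s_0 s_2 = s_0 s_2 s_0$ in type $B$ (and the analogous type $D$ relation involving $s_{n-1}, s_n$), the commutativity $s_0 s_1 = s_1 s_0$, and the commutativity of simple reflections whose indices differ by at least two, one can rearrange the reduced expression for this product so as to expose a $0$-segment of length at least $\lambda_2$ as a right factor of $w$, contradicting the maximality of $\lambda_1$. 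The rearrangement is a finite, type-specific verification carried out case by case on the ranges of $\lambda_1$ relative to $n$; in type $D$ the only additional bookkeeping is to track the color $c_i$ attached to each length-$(n-1)$ segment and to account for the Dynkin diagram automorphism swapping $s_{n-1}$ with $s_n$, which propagates through the commutation argument without altering the overall strategy.
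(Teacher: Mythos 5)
This theorem is cited from Billey and Mitchell \cite{bm08} (their Lemmas 3 and 5); the paper itself gives no proof, so there is nothing internal to compare against. Evaluating your attempt on its own terms: the greedy peeling strategy is plausible, and the induction scaffolding is fine, but the \emph{mechanism} you invoke for both contradictions is wrong. You claim that if $v_1$ had a right descent $s_i$ with $i\ge 2$, absorbing $s_i$ would produce a $0$-segment longer than $\lambda_1$. That only happens when $i = \lambda_1+1$ (or its analogue past the fold). For every other $i$, $s_i$ commutes or braids \emph{through} the segment and exits on the right: one computes $s_i\,\Sigma_0(\lambda_1) = \Sigma_0(\lambda_1)\,s_j$ for some $j\neq 0$, which, propagated through the length-additive factorization $w = v_1'\,s_i\,\Sigma_0(\lambda_1)$, shows $w$ has a right descent $s_j\neq s_0$ and hence fails to lie in $\Wgrass$. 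The contradiction is with Grassmannianness, not with maximality of $\lambda_1$; the same gap affects the case $v_1$ ending in $s_0$, which you do not address at all.

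The same misidentification undermines the key step $\lambda_1\ge\lambda_2$, and here it is not merely imprecise but false as stated. Concretely, take $\widetilde B_n$ with $n\ge 4$ and consider the right factor $\Sigma_1(3)\Sigma_0(2) = s_3 s_2 s_1 s_2 s_0$. One checks directly that $(s_3 s_2 s_1 s_2 s_0)\,(s_0 s_2 s_3) = s_3 s_2 s_1 s_3$ has length $4$, not $5-3=2$, so $\Sigma_0(3)$ is \emph{not} a length-additive right factor, and no $0$-segment of length $\ge 3$ is either; your claimed ``rearrangement exposing a $0$-segment of length at least $\lambda_2$'' does not exist. What is actually true is that $s_3 s_2 s_1 s_2 s_0$ has $s_1$ as a right descent, since $s_3 s_2 s_1 s_2 s_0\,s_1 = s_3 s_1 s_2 s_0$; because right descents of a right factor in a length-additive product transfer to $w$, this contradicts $w\in\Wgrass$. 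So the correct contradiction in this step, as in the previous one, is via the $0$-Grassmannian descent structure, not via maximality of $\lambda_1$. (A small aside: you wrote $\Wgrass = \Waf^J$; with the paper's notation $J = \{s_2,\dots,s_n\}$, this should be $\Waf^S$ with $S = \{s_1,\dots,s_n\}$.) Both gaps are patchable with the descent argument, but as written the proof rests on a rearrangement claim that fails even in low rank.
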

Furthermore, this factorization of $w$ is unique given that any initial product of segments is a factorization $r(u)$ of some element $u \in \Wgrass$.

\subsection{Type $B$ Pieri elements.}

Recalling the elements $\rho_i$ defined in \eqref{eq: type B rho}, we note that $\rho_i$ is the unique Grassmannian element in $\ZB$ of length $i$ for each $i$, and define $\specialgenB{r} = \specialgenB{\rho_r}$.  Let $\ZB_i$ be the length $i$ elements of $\ZB$. 

\begin{proposition}
\label{prop: type B special generators formula}
For $1 \leq r \leq 2n-1$, 
\begin{equation}
\label{type B special generators equation}
\specialgenB{r} = \sum_{w \in \ZB_r} 2^{\stat(w)-\chi(r < n)}A_w
\end{equation}
\end{proposition}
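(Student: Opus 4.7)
The plan is to apply the uniqueness statement in Theorem \ref{theorem: j_0 is isomorphism}. Since the paragraph preceding the proposition asserts that $\rho_r$ is the unique Grassmannian element of $\ZB$ of length $r$, we know $\specialgenB{r} = j_0(\xi_{\rho_r})$ is the unique element of $\mathbb{B}$ of the form $A_{\rho_r} + \sum_{u} c_u A_u$, where $u$ ranges over non-Grassmannian elements of $\WB$ of length $r$. It therefore suffices to show (i) the right-hand side of \eqref{type B special generators equation} lies in $\mathbb{B}$, (ii) its coefficient on $A_{\rho_r}$ is $1$, and (iii) no other $w \in \ZB_r$ is Grassmannian.

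Statement (iii) is precisely the uniqueness of $\rho_r$ recorded just before the proposition. For (ii), I would read off $\Supp(\rho_r)$ from \eqref{eq: type B rho}: for $1 \leq r < n$ the pre-support is $\{0\} \cup \{2,3,\ldots,r\}$, and since $0$ and $1$ lie together in any interval that contains either, the minimal interval cover is $[0,r]$, whose complement $[r+1,n]$ is a single interval, giving $\stat(\rho_r)=1$; for $r \geq n$ the pre-support is $\{0,2,3,\ldots,n\}$, forcing $\Supp(\rho_r) = \Iaf$ and $\stat(\rho_r) = 0$. In both regimes $\stat(\rho_r) - \chi(r<n) = 0$, so the coefficient on $A_{\rho_r}$ is $2^0 = 1$.

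The technical heart is (i), which by Lemma \ref{lss10 lemma 4.7} amounts to showing that for every $v \in \WB$,
\begin{equation*}
\sum_{\substack{w \in \ZB_r \\ w \gtrdot v}} 2^{\stat(w) - \chi(r<n)}\, \alpha_{vw}^\vee \;\in\; \mathbb{Z}K.
\end{equation*}
I would proceed by organizing the Pieri covers of $v$ into small orbits (typically pairs, with occasional larger groups near the special indices $0$, $1$, and $n$) whose contributions cancel modulo $\mathbb{Z}K$. Each $w \in \ZB_r$ with $w \gtrdot v$ corresponds to inserting a single letter into a reduced word for $v$ while remaining below one of the maximal Pieri factors of Definition \ref{def: type B pieri factors}; the location of insertion controls which components of the complement of $\Supp(w)$ appear, and hence pins down $\stat(w)$. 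Pairs of covers $(w, w')$ differing by swapping an insertion across a support boundary produce cover-coroots whose sum, weighted by the relative $2^{\stat}$ factor, lies in $\mathbb{Z}K$. This is the type $B$ analog of the verification carried out for type $C$ in \cite[Proposition 5.4]{lss10}.

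The principal obstacle is handling the Pieri covers concentrated near the incomparable indices $0$ and $1$, and separately near the long-root index $n$, where the combinatorics of Pieri factors is richest and the exponents $\stat(w)$ jump. The correction $\chi(r<n)$ in the statement is precisely what absorbs the ``missing'' factor of $2$ at these boundary strata, ensuring the pairings work uniformly. Once this case analysis is completed, (i)--(iii) combined with Theorem \ref{theorem: j_0 is isomorphism} identify the right-hand side of \eqref{type B special generators equation} with $j_0(\xi_{\rho_r}) = \specialgenB{r}$, establishing the proposition.
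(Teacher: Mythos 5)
Your proposal takes essentially the same approach as the paper: it reduces membership in $\mathbb{B}$ to the cover-coroot condition of Lemma \ref{lss10 lemma 4.7}, verified by a case analysis of Pieri covers (the paper states this precisely as Proposition \ref{type free cover coroot sum}, showing $\sum_{w \in C_v} 2^{\stat(w)}\asscovercoroot = 2^{\stat(v)}K$), and then invokes the uniqueness in Theorem \ref{theorem: j_0 is isomorphism} together with the fact that $\rho_r$ is the unique Grassmannian element of $\ZB_r$. Your sketch of the cover analysis is in the right spirit, though the paper proves the stronger equality (a specific multiple of $K$, not just membership in $\mathbb{Z}K$) and notes one wrinkle — the ``extra-special cover'' — absent from the type $C$ argument in \cite{lss10}.
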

\begin{proof}
See \textsection \ref{subsection: proof of special generators formula}.
\end{proof}

We call the elements defined in Proposition \ref{prop: type B special generators formula} \emph{Pieri elements}.  By analogy to the type $A$ case, they are noncommutative $k$-Schur functions corresponding to the special elements $\rho_i$, and the relations among Pieri elements are given in Proposition \ref{prop: type B special generators relation}.  

\begin{proposition}
\label{prop: type B special generators relation}
The Pieri elements $\specialgenB{i} \in \mathbb{B}$ satisfy 
\begin{equation}
\label{eq: type B special generators equation}
\sum_{r + s = 2m} (-1)^r 2^{-\chi(r \geq n) - \chi(s \geq n)} \specialgenB{r}\specialgenB{s} = 0
\end{equation}
\end{proposition}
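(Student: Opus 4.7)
My plan is to prove the identity directly in the affine nilCoxeter algebra $\mathbb{A}_0^B$, by expanding the products via Proposition~\ref{prop: type B special generators formula} and showing that the coefficient of each basis element $A_w$ vanishes. Recall that in the nilCoxeter algebra $A_v A_u$ equals $A_{vu}$ precisely when $\ell(vu) = \ell(v) + \ell(u)$ and is zero otherwise. Applied to the explicit formula in Proposition~\ref{prop: type B special generators formula}, this gives
\[
\specialgenB{r}\specialgenB{s} \;=\; \sum_{w} \bigg(\sum_{(v,u)} 2^{\stat(v)+\stat(u)-\chi(r<n)-\chi(s<n)}\bigg) A_w,
\]
where the inner sum ranges over reduced factorizations $w = vu$ with $v \in \ZB_r$ and $u \in \ZB_s$. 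Fixing $w$ of length $2m$ and using $\chi(r\ge n)+\chi(r<n) = 1$, the coefficient of $A_w$ on the left-hand side of \eqref{eq: type B special generators equation} simplifies to
\[
c_w \;=\; \sum_{(v,u)} (-1)^{\ell(v)}\, 2^{\,\stat(v)+\stat(u)-2},
\]
summed over reduced factorizations $w = vu$ with both $v,u \in \ZB$; it suffices to prove $c_w = 0$ for every $w$.

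The heart of the proof will be a sign-reversing, weight-preserving involution $\iota$ on the set of such factorizations. The idea is to shift the split point by a single simple reflection: given $w = vu$, transfer the interface letter $s_i$ from one factor into the other in the unique direction for which both resulting factors remain in $\ZB$. This flips the parity of $\ell(v)$, hence reverses $(-1)^{\ell(v)}$, and by construction leaves $\stat(v)+\stat(u)$ unchanged, so the powers of $2$ match. The cleanest framework for organizing this is the Billey--Mitchell segment decomposition \cite{bm08} recalled above: every element of $\Wgrass$ factors uniquely into segments, each segment lies in $\ZB$, and $\iota$ can be phrased as a local move on the last segment of $v$ and the first segment of $u$, reducing the combinatorics to a finite catalogue of cases.

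The main obstacle is verifying the invariance $\stat(v')+\stat(u') = \stat(v)+\stat(u)$ under $\iota$. Transferring $s_i$ may enlarge or shrink the support of each factor and thereby merge or split components of its complement, so $\stat(v)$ and $\stat(u)$ individually change by $\pm 1$ and must do so in compensating pairs. This is particularly delicate in type $B$ because $0$ and $1$ are incomparable under $\prec$ and every interval must contain both of them or neither, which imposes strong constraints on transfers at the ``bottom'' of the Dynkin diagram. The case analysis therefore splits on whether $i \in \{0,1\}$, $i = n$, or $1 < i < n$, and on the local support structure of $v$ and $u$ near the split. Fixed points of $\iota$ will require separate treatment, either by ruling them out using the restricted reduced-word shapes of Definition~\ref{def: type B pieri factors}, or by pairing them up via an auxiliary move; I expect this analysis to parallel the type $C$ argument of Lam--Schilling--Shimozono in \cite{lss10}.
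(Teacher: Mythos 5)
Your reduction to showing $c_w = \sum_{(v,u)} (-1)^{\ell(v)}2^{\stat(v)+\stat(u)-2} = 0$ is arithmetically correct, but you then aim to prove this for \emph{every} $w \in \Waf$, and that overreach is where the plan stalls. The paper's proof instead observes that the left-hand side of \eqref{eq: type B special generators equation} is an element of $\mathbb{B}$, and $\mathbb{B}$ has basis $\{\specialgenB{w} : w \in \Bgrass\}$ (Theorem~\ref{theorem: j_0 is isomorphism}); combined with the homology Pieri rule (Theorem~\ref{theorem: homology pieri rule}) this means one only has to show that the coefficient of $\specialgenB{w}$ vanishes for \emph{Grassmannian} $w$. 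That restriction is decisive: for $w \in \Bgrass$, any length-additive factorization $w = uv$ with $u,v \in \ZB$ forces $v$ to be Grassmannian, hence a segment $\Sigma_0(i) = \rho_i$, and then $u = w\rho_i^{-1}$ is determined. The factorizations are parametrized by an interval of values of $i$, and the cancellation reduces to a short alternating sum over $i$, handled by a small case analysis together with the observation $\stat(\Sigma_0(i))-1 = -\chi(i\geq n)$. No involution machinery is needed.

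Because you work over all of $\Waf$ rather than just $\Bgrass$, the ingredients you lean on do not actually apply: the Billey--Mitchell segment decomposition is a theorem about $\Wgrass$, yet you propose to apply it to the factors $v,u \in \ZB$, which need not be Grassmannian, and for a general non-Grassmannian $w$ the set of factorizations $w = vu$ with $v,u \in \ZB$ has no such rigid structure. Relatedly, the ``interface letter'' you wish to transfer is not well-defined: $v$ may have several reduced words ending in different letters, $u$ may have several beginning with different letters, and there is no canonical choice. Finally, the claimed invariance of $\stat(v)+\stat(u)$ under such a transfer is not established, and is genuinely delicate because moving a single generator across the split can merge or split components of the complement of the support in both factors simultaneously. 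The cleanest fix is to first make the paper's reduction to Grassmannian $w$; at that point your intuition that ``the split moves by one step'' becomes exactly the shift $i \mapsto i\pm 1$ in the range of segment lengths, and the remaining verification is elementary.
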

\begin{proof}
Suppose $w \in \Bgrass$, $\ell(w) \leq 2n-1$.  We analyze the coefficient of $\specialgenB{w}$ in $\specialgenB{2m-i}\specialgenB{i}$, for any $i$.  To that end, suppose that $w = uv$ with $u \in \Z_{2m-i}$ and $v \in \Z_i$.  Since any right factor of a Grassmannian element is Grassmannian, we must have that $v = \cdots s_3 s_2 s_0$ is a segment of length $i$.  By \cite{bm08}, $w$ has a length-decreasing factorization into segments; if $w = uv$ with $u,v \in \ZB$, then the factorization of $w$ into segments can involve at most two segments.

On a case by case basis, depending on the relation of $m, i,$ and $k_0$, we can verify that 
$\specialgenB{w}$ appears in the product $\specialgenB{2m-i}\specialgenB{i}$ if and only if $k_1 \leq i \leq k_0$.  Furthermore, the above obvious factorization of $w$ into $u,v$ with $u \in \Z_{2m-i}$, $v \in \Z_{i}$ is the only such factorization, since there is only one reduced word for $\Sigma_0(k_0)$.  

Similarly, a case analysis (depending on $k_0 \geq n$ or $k_0 < n$) of the coefficients that appear in the left hand side of \eqref{eq: type B special generators equation} and noting that 
\[
\stat(\Sigma_0(i)) -1 = \left \{ \begin{array}{cc} -1 & \textrm{ if } i \geq n\\ 0 &\textrm{ else } \\ \end{array} \right.,
\]
finishes the proof.
\end{proof}

\begin{theorem}
\label{theorem: type B coproduct formula}
In type $B$, for $1 \leq r \leq 2n-1$,
\begin{equation}
\label{type B coproduct equation}
\phi_0^{(2)}(\Delta(\specialgenB{r})) = 1 \otimes \specialgenB{r} + \specialgenB{r} \otimes 1 + \sum_{1 \leq s < r}2^{\chi(r \geq n>r-s \textrm{ and } n > s)} \specialgenB{s} \otimes \specialgenB{r-s} 
\end{equation}
\end{theorem}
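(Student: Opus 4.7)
The plan is to compute $\phi_0^{(2)}(\Delta(\specialgenB{r}))$ in the basis $\{A_u \otimes A_v\}$ of $\mathbb{A}_0 \otimes \mathbb{A}_0$ and match it term-by-term with the right-hand side, which by Proposition \ref{prop: type B special generators formula} expands as
\[
\sum_{s=0}^r 2^{\chi(r \geq n > r-s,\, n > s)} \sum_{(u,v) \in \ZB_s \times \ZB_{r-s}} 2^{\stat(u) + \stat(v) - \chi(s<n) - \chi(r-s<n)} A_u \otimes A_v,
\]
together with the boundary terms $1 \otimes \specialgenB{r}$ and $\specialgenB{r} \otimes 1$.

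First I would expand $\Delta(A_w) = \Delta(A_{i_1}) \cdots \Delta(A_{i_r})$ in $\mathbb{A} \otimes_S \mathbb{A}$ using $\Delta(A_i) = A_i \otimes 1 + 1 \otimes A_i - A_i \otimes \alpha_i A_i$, use the commutation $A_i \lambda = (s_i \lambda) A_i + \langle \coroot{i}, \lambda \rangle$ to place the result in standard form $\sum s_{u,v}\, A_u \otimes A_v$, and then apply $\phi_0^{(2)}$ to evaluate the $S$-coefficients at zero. Two kinds of contributions survive: \emph{pure} terms, from choices using only the first two summands of each $\Delta(A_{i_j})$, which assemble into $\sum_{uv = w,\, \ell(u)+\ell(v)=r} A_u \otimes A_v$; and \emph{$\alpha$-corrections} from the third summand, which contribute further $A_u \otimes A_v$ with $\ell(u)+\ell(v) < r$, weighted by signed products of Cartan-matrix entries $\langle \coroot{j}, \alpha_k \rangle$ generated during normal-ordering.

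Summing over $w \in \ZB_r$ with weights $2^{\stat(w) - \chi(r<n)}$ and regrouping by $(u,v)$, the two contribution types must conspire to yield the target coefficient. For a pair $(u,v)$ admitting a length-additive product in $\ZB_r$, the pure contribution is $2^{\stat(uv) - \chi(r<n)}$, which in general does \emph{not} equal the target: for instance, when $n = 3$, $r = 3$, and $(u,v) = (s_3,\, s_2 s_0)$, one has $\stat(\rho_3) = 0$ and $\stat(s_3) = \stat(s_2 s_0) = 1$, giving a pure contribution of $1$ against a target of $2$. The missing unit must be supplied by an $\alpha$-correction. For pairs $(u,v) \in \ZB_s \times \ZB_{r-s}$ that do not factor as a length-additive product in $\ZB_r$, the entire target coefficient must arise from $\alpha$-corrections alone. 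Verifying these coincidences for all $(u,v)$ is the combinatorial heart of the argument.

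The main obstacle is the detailed bookkeeping of the $\alpha$-correction contributions: their signs and magnitudes depend on precisely how the $\alpha_i$-factors propagate through the nilHecke commutation relations and on the specific reduced word of $w$. I would exploit the very constrained structure of type $B$ Pieri factors from Definition \ref{def: type B pieri factors} --- each is a subword of a palindromic string of simple generators of length at most $2n-1$, possibly cyclically rotated --- to enumerate the finitely many $\alpha$-collapse patterns that can arise, and to check that the total net coefficient of $A_u \otimes A_v$ matches the target in each case. These technical computations closely parallel the type $C$ computations of \cite{lss10}, so I would follow that template and appeal, where possible, to \cite{lss10} and \cite{pon10} for the analogous intermediate identities.
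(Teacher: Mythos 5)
Your overall strategy --- expand $\Delta(A_w)$ from the defining coproduct formula in the nilHecke algebra, split contributions into pure terms and $\alpha$-correction terms, sum over $w \in \ZB_r$, and then verify coefficient-by-coefficient against the right-hand side --- is in the same spirit as the paper's sketch, which likewise relies on the type-free coproduct rule of \cite{lss10} (Proposition 7.1 there) and on case-by-case analysis parallel to the type $C$ case. Your observation that the pure contribution alone does not match and must be supplemented by $\alpha$-corrections is correct, and your worked example ($n=3$, $r=3$, $(u,v)=(s_3, s_2s_0)$) accurately exhibits the discrepancy.

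However, you miss the simplification that the paper uses and that makes the case analysis tractable: since $\specialgenB{r} \in \mathbb{B}$ (by Proposition \ref{prop: type B special generators formula}), and $\mathbb{B}$ is a sub-coalgebra of $\mathbb{A}_0$ under $\phi_0^{(2)}\circ\Delta$, both sides of \eqref{type B coproduct equation} lie in $\mathbb{B}\otimes\mathbb{B}$. By Theorem \ref{theorem: j_0 is isomorphism}, an element of $\mathbb{B}$ is uniquely determined by its coefficients on $A_w$ for $w$ \emph{Grassmannian}, so it suffices to compare the coefficients of $A_u \otimes A_v$ only for $u,v$ both Grassmannian. Your approach instead proposes to verify the equality of coefficients for arbitrary pairs $(u,v)$ in $\ZB_s \times \ZB_{r-s}$, which is a substantially larger computation; indeed, the very pair you single out, $(s_3, s_2s_0)$, has $s_3$ non-Grassmannian and is therefore entirely bypassed in the paper's argument. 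Without the Grassmannian reduction, the bookkeeping of $\alpha$-corrections --- which you yourself identify as the main obstacle --- becomes much heavier than in the analogous type $C$ proof of \cite{lss10} that you plan to template, since that proof also exploits this reduction. Incorporating the reduction via Theorem \ref{theorem: j_0 is isomorphism} and Proposition \ref{prop: type B special generators formula} would align your approach with the paper's and significantly narrow the set of coefficients you must verify.
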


\begin{proof}
See \textsection \ref{subsection: proof of coproduct formula}.
\end{proof}

\subsection{Type $B$ affine Stanley symmetric functions.} Define $\Omega_{-1}^{\mathbb{B}} \in \mathbb{B} \hat{\otimes} \cohomB$ by taking the image of $\repkerB$ under $j_0 \circ \PhiB : \homB \to \mathbb{B}$.

\begin{align}
\label{eq: type B omega as sum over partitions}
\Omega_{-1}^{\mathbb{B}} & = \sum_{\lambda_1 \leq 2n-1} \specialgenB{\lambda_1}\specialgenB{\lambda_2}\cdots \otimes 2^{-\numpartsgtrn(\lambda)} m_\lambda[Y] \\
&= \sum_{\substack{\alpha \\ \alpha_i \leq 2n-1}} \specialgenB{\alpha_1}\specialgenB{\alpha_2}\cdots \otimes 2^{-\numpartsgtrn(\lambda)} y^\alpha,
\end{align}
where the second equality follows because $\mathbb{B}$ is a commutative algebra.

Define $\ASSFB{w}[Y]$ by 
\begin{equation}
\Omega_{-1}^{\mathbb{B}} = \sum_{w \in \WB} A_w \otimes \ASSFB{w}[Y]
\end{equation}

Recalling that $\specialgenB{r} = \sum_{w \in \ZB_r} 2^{\stat(w) - \chi(r < n)} A_w$, it is not hard to see that this definition matches that of Definition \ref{def: affine Stanley symmetric functions}.  We also note that
\begin{equation}
\label{eq: repkerBalg}
\repkerBalg = \sum_{w \in \Bgrass} \specialgenB{w} \otimes \ASSFB{w}[Y]
\end{equation}
by Theorem \ref{theorem: j_0 is isomorphism}.

\subsection{Type $D$ Pieri elements.}
\label{subsection: type D special generators} Recall that the elements $\rho_i \in \WD$ defined in \eqref{eq: type D rho}, as well as $\rho_{n-1}^{(1)}$ and $\rho_{n-1}^{(2)}$.  Let $\specialgenD{r} = \mathbb{P}_{\rho_r}$ for $1 \leq r \leq 2n-2, r \neq n-1$, and let $\specialgenD{n-1} = \frac{1}{2}(\mathbb{P}_{\rho_{n-1}^{(1)}}^D + \mathbb{P}_{\rho_{n-1}^{(2)}}^D)$.

Given a reduced word for $v \in \ZD$, let $v_+$ be the element of $\WD$ with reduced word given by the subword of $v$ of letters with index greater than $j$, and let $v_-$ have reduced word given by the subword of $v$ of letters with index less than $j$.
\begin{definition}
\label{type D epsilon definition}
We define the special element $\epsilon \in \nilCox$ by stating that the coefficient of $A_{\rho_{n-1}^{(1)}}$ in $\epsilon$ is 1, the coefficient of $A_{\rho_{n-1}^{(2)}}$ is $-1$, and all other coefficients are given by the following symmetries.  
\begin{enumerate}
\item For any $2 \preceq j \preceq n-2$, $\pm A_{v_- j v_+} \in \epsilon \implies \pm A_{v_+ j v_-} \in \epsilon$.
\item For any $2 \preceq j \preceq n-2$, $\pm A_{v_- v_+ j} \in \epsilon \implies \pm A_{j v_- v_+} \in \epsilon$.
\item If $\pm A_w \in \epsilon$ and $w'$ is obtained from $w$ by swapping $n$ and $n-1$ or swapping $0$ and $1$ in a reduced word for $w$, then $\mp A_{w'} \in \epsilon$.
\item $\pm A_{s_n v} \in \epsilon \implies \mp A_{v s_n} \in \epsilon$.
\item $\pm A_{s_{n-1} v} \in \epsilon \implies \mp A_{v s_{n-1}} \in \epsilon$.
\end{enumerate}
Thus the coefficients in $\epsilon$ are all $\pm 1$, and if $\epsilon = \sum_{w} c_w A_w$, then $c_w \neq 0$ if and only if $\Supp(w) = \Iaf$.
\end{definition}

It is not clear from the definition that the element $\epsilon$ is well-defined.  For $w \in \WD$, given a reduced word $\text{u} = u_1 u_2 \cdots u_{\ell(w)} \in R(w)$, let $\widehat{des}(u)$ denote the number of $i$ such that $ u_i > u_{i+1}$, and let $\widehat{des}(w) = \min (\widehat{des}(u) \mid u \in R(w))$.  Then it is not hard to see that knowing $\supp(w)$ and the parity of $\widehat{des}(w)$ is enough to give the sign of $A_w$ in $\epsilon$ (e.g., $\widehat{des}(w)$ is invariant under swaps of type (1) or (2), and changes parity under swaps of type (4) or (5), while $\supp(w)$ changes under swaps of type (3)).  Therefore, $\epsilon$ is well-defined.

Note also that $\epsilon$ has only two Grassmannian terms, $A_{\rho_{n-1}^{(1)}}$ and $A_{\rho_{n-1}^{(2)}}$.  It follows from Proposition \ref{prop: type D special generators formula} that $\epsilon = \frac{1}{2}(\specialgenD{\rho_{n-1}^{(1)}} - \specialgenD{\rho_{n-1}^{(2)}})$.

\begin{proposition}
\label{prop: type D special generators formula}
For $1 \leq r \leq 2n-2$, 
\begin{equation}
\label{eq: type D special generators equation}
\specialgenD{r} = \sum_{w \in \Z_r} 2^{\stat(w)-\chi(r < n)}A_w
\end{equation}
Furthermore, the special element $\epsilon$ given by Definition \ref{type D epsilon definition} lies in $\B$.
\end{proposition}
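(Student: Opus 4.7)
The plan is to verify both parts of the proposition by applying Peterson's criterion (Lemma~\ref{lss10 lemma 4.7}) to explicit candidate elements of $\nilCox$. For the first part, define
$$
\tilde{\mathbb{P}}_r^D := \sum_{w \in \ZD_r} 2^{\stat(w) - \chi(r < n)} A_w.
$$
By Lemma~\ref{lss10 lemma 4.7}, showing $\tilde{\mathbb{P}}_r^D \in \B$ amounts to showing that, for every $v \in \Waf$,
$$
\sum_{\substack{w \gtrdot v \\ w \in \ZD_r}} 2^{\stat(w) - \chi(r < n)}\, \asscovercoroot \ \in \ \mathbb{Z}K.
$$
I would proceed by a case analysis organized by the Billey--Mitchell segment factorization \cite{bm08}: every $w \in \ZD_r$ factors uniquely as a product of at most two segments, and the Bruhat covers $v \lessdot w$ arise either by deleting a letter interior to a segment or by exchanging letters across a segment boundary. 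In each case $\asscovercoroot$ is read off from the associated real reflection, and the power $2^{\stat(w)}$ is calibrated so that the contributions pair up, either cancelling outright or summing to an integral multiple of $K$.

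Once $\tilde{\mathbb{P}}_r^D \in \B$ is established, I identify it with $\specialgenD{r}$ by comparing Grassmannian summands, invoking the uniqueness clause of Theorem~\ref{theorem: j_0 is isomorphism}. For $r \neq n-1$ the unique Grassmannian element of $\ZD_r$ is $\rho_r$ of \eqref{eq: type D rho}, and a short calculation gives $\stat(\rho_r) = \chi(r < n)$, so $A_{\rho_r}$ appears in $\tilde{\mathbb{P}}_r^D$ with coefficient $1$; uniqueness then forces $\tilde{\mathbb{P}}_r^D = \specialgen{\rho_r} = \specialgenD{r}$. For $r = n-1$ the Grassmannian elements of $\ZD_{n-1}$ are $\rho_{n-1}^{(1)}$ and $\rho_{n-1}^{(2)}$; each has $\Supp = \Iaf$ (the support omits one of $\{0,1\}$ and one of $\{n-1,n\}$, and the type~$D$ interval convention forces both pairs to be filled in), so $\stat = 0$ and each appears with coefficient $\tfrac12$. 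Hence $\tilde{\mathbb{P}}_{n-1}^D - \tfrac12(\specialgen{\rho_{n-1}^{(1)}} + \specialgen{\rho_{n-1}^{(2)}})$ lies in $\B$ with no Grassmannian summand and therefore vanishes, yielding $\tilde{\mathbb{P}}_{n-1}^D = \specialgenD{n-1}$.

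For the claim that $\epsilon \in \B$ I again apply Lemma~\ref{lss10 lemma 4.7} directly to the element $\epsilon = \sum_w c_w A_w$ specified in Definition~\ref{type D epsilon definition}. The symmetries (1)--(5) are exactly what is needed to make $\sum_{w \gtrdot v} c_w \asscovercoroot$ cancel for every $v$: swaps of types (1) and (2) pair interior covers on opposite sides of a letter $j$, while swaps (3)--(5) pair covers related by reflecting across the boundary nodes $\{0,1\}$ or $\{n-1,n\}$ and reverse their signs through the parity of $\widehat{des}$; in each pair the associated cover coroots either sum to zero or to a multiple of $K$. The main technical obstacle in both verifications is the combinatorial bookkeeping in the cover analysis: type~$D$ Pieri factors have reduced words that wind along both forks of the affine Dynkin diagram, so the possible covers and the interaction of $\stat(w)$ (or of the sign rules defining $\epsilon$) with each coroot proliferate into many subcases. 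This closely parallels the argument for Proposition~\ref{prop: type B special generators formula} and the type~$C$ proofs in \cite{lss10}, and I expect the detailed casework to extend those arguments with additional subcases accounting for the fork at $\{n-1,n\}$.
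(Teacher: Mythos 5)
Your high-level plan — apply Peterson's criterion (Lemma~\ref{lss10 lemma 4.7}) to the explicit candidate elements and then use the uniqueness clause of Theorem~\ref{theorem: j_0 is isomorphism} to match Grassmannian terms — is the same as the paper's, and the identification step you carry out (computing $\stat(\rho_r)$, $\stat(\rho_{n-1}^{(i)})$, and concluding the Grassmannian coefficients are $1$ or $\tfrac12$) is correct. The problem is with the engine that is supposed to drive the verification of the cover--coroot condition.

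Your proposed organizing principle, ``every $w \in \ZD_r$ factors uniquely as a product of at most two segments,'' is false. The Billey--Mitchell factorization applies to \emph{Grassmannian} elements, whereas $\ZD_r$ consists of all length-$r$ elements in the Bruhat order ideal generated by the length-maximal Pieri factor words, most of which are far from Grassmannian. Concretely, $s_3 s_2 \in \ZD$ (it is a subword of a cyclic rotation of a generator), but it ends in $s_2$ and hence cannot be written as any product of segments, since every segment in type $D$ ends in $s_0$ or $s_1$. So the classification of Bruhat covers of a Pieri factor $v$ cannot be read off from segment boundaries. The paper's actual device is different: it fixes a canonical (``normal'') reduced word for each $v \in \ZD$ and classifies the covers $w \in C_v$ as the Pieri factors obtained by inserting a ``missing'' letter into that word, with the resulting cover--coroot bookkeeping done case by case. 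That analysis also forces the inclusion of an \emph{extra-special cover} not visible from segment considerations (see \cite{pon10}), which your scheme would miss entirely.

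The $\epsilon$ claim has the same issue. You assert that symmetries (1)--(5) of Definition~\ref{type D epsilon definition} ``are exactly what is needed'' to make $\sum_{w \gtrdot v} c_w \asscovercoroot$ cancel, but that is precisely the nontrivial content of Lemma~\ref{epsilon cover coroot sum}: for each $v \in \ZD$ of length $n-2$ one must actually enumerate the Pieri covers using the canonical reduced-word structure and show the signed contributions vanish. The symmetries tell you how the signs of the $c_w$ transform, but they do not by themselves pair covers of a \emph{fixed} $v$, nor do they control the associated coroots; the pairing must be exhibited from the reduced-word cover classification, which your outline does not supply.
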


\begin{proof}
See \textsection \ref{subsection: proof of special generators formula}.
\end{proof}

The relations among the Pieri elements are given in the following proposition.
\begin{proposition}
\label{prop: type D special generators relation}
The elements $\specialgenD{i} \in \mathbb{B}$ satisfy 
\begin{equation}
\label{eq: type D special generators equation 1}
\sum_{r + s = 2m} (-1)^r 2^{-\chi(r \geq n) - \chi(s \geq n)} \specialgenD{r}\specialgenD{s} = 0
\end{equation}
and
\begin{equation}
\label{eq: type D special generators equation 2}
(\specialgenD{n-1} + \epsilon)(\specialgenD{n-1} - \epsilon) - \specialgenD{n-2}\specialgenD{n} + \cdots \pm \specialgenD{0}\specialgenD{2n-2} = 0
\end{equation}
\end{proposition}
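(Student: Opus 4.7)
The plan is to extend the proof of Proposition \ref{prop: type B special generators relation} to type $D$, handling the extra complication posed by the two colors of length-$(n-1)$ segments.

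For equation \eqref{eq: type D special generators equation 1}, I would fix $w \in \Dgrass$ with $\ell(w) = 2m$ and examine the coefficient of $A_w$ in each product $\specialgenD{r}\specialgenD{s}$ with $r+s = 2m$. A factorization $w = uv$ with $u \in \ZD_{2m-i}$ and $v \in \ZD_i$ forces $v$ to be Grassmannian (since right factors of Grassmannian elements are Grassmannian), hence $v = \Sigma_0^c(i)$ for some color $c$; by the Billey--Mitchell length-decreasing segment factorization, $u$ is then determined. The range $k_1 \leq i \leq k_0$ of $i$ for which such a factorization exists is read off from the terminal segments of $w$, and the alternating signs $(-1)^i$ produce the same telescoping cancellation as in type $B$, once the power-of-two weights $2^{-\chi(r \geq n) - \chi(s \geq n)}$ are matched with the $2^{\stat - \chi(<n)}$ contributions supplied by Proposition \ref{prop: type D special generators formula}. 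Whenever $m \neq n-1$, at most one of $r, s$ equals $n-1$, and because $\specialgenD{n-1}$ is defined as the average $\tfrac{1}{2}(\mathbb{P}_{\rho_{n-1}^{(1)}} + \mathbb{P}_{\rho_{n-1}^{(2)}})$, it correctly counts contributions from either color with no further correction.

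For equation \eqref{eq: type D special generators equation 2}, corresponding to $m = n-1$, the key observation is that $\specialgenD{n-1} + \epsilon = \mathbb{P}_{\rho_{n-1}^{(1)}}$ and $\specialgenD{n-1} - \epsilon = \mathbb{P}_{\rho_{n-1}^{(2)}}$, which follows from the definition of $\specialgenD{n-1}$ together with the identity $\epsilon = \tfrac{1}{2}(\mathbb{P}_{\rho_{n-1}^{(1)}} - \mathbb{P}_{\rho_{n-1}^{(2)}})$ noted after Proposition \ref{prop: type D special generators formula}. Thus the modified middle term $(\specialgenD{n-1} + \epsilon)(\specialgenD{n-1} - \epsilon) = \mathbb{P}_{\rho_{n-1}^{(1)}} \mathbb{P}_{\rho_{n-1}^{(2)}}$ enumerates precisely those factorizations $w = uv$ with $u$ a $1$-segment and $v$ a $0$-segment, each of length $n-1$ but of opposite colors, which is the correct Schubert-level enumeration in this degree. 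The remaining outer terms $\pm \specialgenD{n-1-k}\specialgenD{n-1+k}$ are handled exactly as in equation \eqref{eq: type D special generators equation 1}, and the same telescoping in $k$ yields the vanishing.

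The principal obstacle is the case-by-case bookkeeping: for each possible terminal-segment structure of $w$ one must check that the signed sum of factorization counts, weighted by the appropriate powers of $2$, actually cancels, and the colored length-$(n-1)$ segments introduce several sub-cases distinguished by whether the final or penultimate segment of $w$ has length $n-1$ and which color it carries. Ensuring that the sign conventions of Definition \ref{type D epsilon definition} (whose well-definedness hinges on the parity of $\widehat{des}(w)$) are compatible with the color interactions in equation \eqref{eq: type D special generators equation 2} is the technical heart of the argument; in practice this case analysis is long enough that, as noted in the introduction, it would likely be deferred to \cite{pon10}.
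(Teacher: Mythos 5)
Your argument for \eqref{eq: type D special generators equation 1} is essentially the paper's: fix a Grassmannian $w$, use the Billey--Mitchell segment factorization to classify the factorizations $w = uv$ with $u, v \in \ZD$, and telescope the alternating sum exactly as in Proposition~\ref{prop: type B special generators relation}. That part is fine, and your remark that $\specialgenD{n-1} = \tfrac{1}{2}(\mathbb{P}_{\rho_{n-1}^{(1)}} + \mathbb{P}_{\rho_{n-1}^{(2)}})$ absorbs both colors without correction when $m \neq n-1$ is the right point to make.

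For \eqref{eq: type D special generators equation 2} you diverge from the paper. The paper expands $(\specialgenD{n-1} + \epsilon)(\specialgenD{n-1} - \epsilon) = \specialgenD{n-1}^2 - \epsilon^2$ (using commutativity of $\mathbb{B}$) and then computes $\epsilon^2$ coefficient-by-coefficient from the sign symmetries in Definition~\ref{type D epsilon definition}; the residue left over after the type-$B$-style telescoping of the outer terms $\pm\specialgenD{n-1-k}\specialgenD{n-1+k}$ is shown to match $\epsilon^2$ exactly. You instead rewrite the middle term as $\mathbb{P}_{\rho_{n-1}^{(1)}} \mathbb{P}_{\rho_{n-1}^{(2)}}$ and claim it ``enumerates precisely those factorizations $w = uv$ with $u$ a $1$-segment and $v$ a $0$-segment of opposite colors.'' The rewrite itself is correct and is a valid alternative starting point, but the enumeration claim is imprecise and is where the argument actually needs work: the coefficient of $A_w$ in $\mathbb{P}_{\rho_{n-1}^{(1)}}\mathbb{P}_{\rho_{n-1}^{(2)}}$ for Grassmannian $w$ is, by Theorem~\ref{theorem: homology pieri rule}, the coefficient $j_{\rho_{n-1}^{(1)}}^{w(\rho_{n-1}^{(2)})^{-1}}$, which sums over \emph{all} length-additive factorizations $w = uv$ with $u,v$ Pieri factors carrying nonzero weight in $\mathbb{P}_{\rho_{n-1}^{(1)}}$ resp.\ $\mathbb{P}_{\rho_{n-1}^{(2)}}$, not merely over opposite-color segment pairs, and these weights already encode the $\pm 1$ contributions of $\epsilon$. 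So the case analysis you defer is not avoided by the rewrite --- you still need the sign axioms of $\epsilon$ to evaluate those weights, and making the ``opposite colors'' heuristic rigorous amounts to reproducing the paper's $\epsilon^2$ computation in a less transparent form. The paper's route (compute $\epsilon^2$ directly, then compare residues) is the cleaner way to organize the same bookkeeping.
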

\begin{proof}
Equation \eqref{eq: type D special generators equation 1} follows by a similar argument to Proposition \ref{prop: type B special generators relation}, and \eqref{eq: type D special generators equation 2} follows from using the symmetry relations of $\epsilon$ to find the coefficients that appear in $\epsilon^2$.
\end{proof}
 
 \begin{theorem}
\label{theorem: type D coproduct formula}
In type $D$, for $1 \leq r < 2n-2$,
\begin{align}
\phi_0^{(2)}(\Delta(\specialgenD{r})) &= 1 \otimes \specialgenD{r} + \specialgenD{r} \otimes 1 + \sum_{1 \leq s < r}2^{\chi(r \geq n>r-s \textrm{ and } n > s)} \specialgenD{s} \otimes \specialgenD{r-s}  \\
\phi_0^{(2)}(\Delta(\specialgenD{2n-2})) &= 1 \otimes \specialgenD{2n-2} + \specialgenD{2n-2} \otimes 1 + \sum_{1 \leq s < 2n-2, s \neq n-1} \specialgenD{s} \otimes \specialgenD{2n-2-s}\\ 
\nonumber &\quad + 2\specialgenD{n-1} \otimes \specialgenD{n-1} + (-1)^{n-1}\cdot 2 \epsilon \otimes \epsilon \\
\phi_0^{(2)}(\Delta(\epsilon)) &= 1 \otimes \epsilon + \epsilon \otimes 1
\end{align}
\end{theorem}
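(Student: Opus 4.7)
The plan is to mirror the proof of Theorem~\ref{theorem: type B coproduct formula} for type $B$, adapting to the colored length $n-1$ segments and the primitive element $\epsilon$ peculiar to type $D$. The starting point is the observation (standard in the nilHecke setup used in~\cite{lam08, lss10}) that
$$
\phi_0^{(2)}(\Delta(A_w)) \;=\; \sum_{\substack{(u,v):\, uv = w \\ \ell(u) + \ell(v) = \ell(w)}} A_u \otimes A_v,
$$
since $\phi_0$ kills every surviving $\alpha_i$ coefficient arising in the nilHecke coproduct. Applying this term by term to $\specialgenD{r} = \sum_{w \in \ZD_r} 2^{\stat(w) - \chi(r<n)} A_w$ reduces the theorem to a combinatorial matching between coefficients of $A_u \otimes A_v$ on both sides. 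Because $\ZD$ is a Bruhat order ideal by definition, any reduced factorization $w = uv$ of a Pieri factor automatically satisfies $u \in \ZD_s$ and $v \in \ZD_{r-s}$.

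For $r < 2n-2$, the combinatorial core is the identity
$$
\stat(uv) \;=\; \stat(u) + \stat(v) - 1 + \chi(s \geq n \text{ and } r-s \geq n),
$$
which one verifies by analyzing how support intervals and their complementary components merge when reduced segment words are concatenated. Combining this with the $\chi(r<n)$ correction factors yields exactly the weight $2^{\chi(r \geq n > r-s \text{ and } n > s)}$ claimed in the formula. For each generic $(u,v)$ there is a unique $w = uv \in \ZD_r$ since the reduced-word structure of Pieri segments is essentially rigid (by the Billey--Mitchell classification), so the coefficient matching proceeds by a case split on where $s$, $r-s$, and $r$ fall relative to the threshold $n$, entirely parallel to Proposition~\ref{prop: type B special generators relation}.

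The case $r = 2n-2$ is the most delicate: pairs $(u,v) \in \ZD_{n-1} \times \ZD_{n-1}$ come in two colors each, and different color combinations can produce the same product $w$ of support $\Iaf$. Here I would expand both sides on the $A_u \otimes A_v$ basis, separate the $(n-1, n-1)$ contribution from the rest, and use the definition $\specialgenD{n-1} = \tfrac{1}{2}(\mathbb{P}_{\rho_{n-1}^{(1)}}^D + \mathbb{P}_{\rho_{n-1}^{(2)}}^D)$ to recognize the color-symmetric combination as $2\,\specialgenD{n-1} \otimes \specialgenD{n-1}$. The color-antisymmetric remainder reassembles, under the sign conventions of Definition~\ref{type D epsilon definition}, into $(-1)^{n-1}\cdot 2\,\epsilon \otimes \epsilon$; the sign $(-1)^{n-1}$ is controlled by the parity of $\widehat{des}$ that governs signs in $\epsilon$.

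Finally, the primitivity statement $\phi_0^{(2)}(\Delta(\epsilon)) = 1 \otimes \epsilon + \epsilon \otimes 1$ is what I expect to be the main obstacle. The strategy is to construct a sign-reversing involution on the set of non-trivial factorizations $w = uv$ of terms $\pm A_w$ appearing in $\epsilon$. Rules~(4) and~(5) of Definition~\ref{type D epsilon definition} flip the sign of a term when $s_n$ or $s_{n-1}$ is cyclically moved across a word; I would exploit these to pair each factorization $(u,v)$ of some $w$ with a factorization of a differently-signed $w'$ so that their contributions to $\phi_0^{(2)}(\Delta(\epsilon))$ cancel. Showing that this involution is well-defined and fixed-point-free on all non-trivial factorizations requires essentially the same bookkeeping of $\supp(w)$ and $\widehat{des}(w)$ used to verify that $\epsilon$ itself is well-defined, and this is the bulk of the technical work.
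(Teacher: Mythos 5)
The proposal's foundation is the identity
$$
\phi_0^{(2)}(\Delta(A_w)) = \sum_{\substack{(u,v):\,uv = w \\ \ell(u)+\ell(v)=\ell(w)}} A_u \otimes A_v,
$$
justified by the claim that ``$\phi_0$ kills every surviving $\alpha_i$ coefficient.'' This is false, and it is the critical gap. When the ring map $\Delta$ is applied to a product $A_{i_1}\cdots A_{i_\ell}$, the cross terms involving $\alpha_i A_i$ must be commuted through the other $A_j$'s before $\phi_0^{(2)}$ can be read off; each such commutation $A_j \lambda = (s_j\lambda)A_j + \langle \alpha_j^\vee,\lambda\rangle$ produces a constant $\langle\alpha_j^\vee,\lambda\rangle$ that \emph{survives} $\phi_0$. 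Concretely, in type $\widetilde{A}_2$ a direct calculation gives
$$
\phi_0^{(2)}(\Delta(A_0 A_1)) = A_0A_1 \otimes 1 + A_0 \otimes A_1 + A_1 \otimes A_1 + A_1 \otimes A_0 + 1 \otimes A_0A_1,
$$
so the terms $A_1 \otimes A_1$ and $A_1 \otimes A_0$ appear even though neither $(A_1,A_1)$ nor $(A_1,A_0)$ comes from a reduced factorization of $s_0s_1$. This is exactly why \cite[Proposition 7.1]{lss10} is needed: the correct coproduct rule is phrased in terms of more intricate operations on reduced words (insertions/deletions tracked by three classes of positions), not bare factorizations. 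Because everything downstream in your argument — the claimed merging identity for $\stat(uv)$, the rigidity-of-segments count, the color decomposition at $r = 2n-2$, and the sign-reversing involution for primitivity of $\epsilon$ — is organized around matching coefficients of $A_u \otimes A_v$ with $uv=w$, the whole scheme is built on a false combinatorial description of $\phi_0^{(2)}\circ\Delta$.

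Beyond this central issue, the proposed merging formula $\stat(uv) = \stat(u) + \stat(v) - 1 + \chi(s\geq n \text{ and } r-s\geq n)$ is stated as a general identity for all length-additive reduced factorizations of $w\in\ZD_r$, but it is not hard to produce factorizations where the complement components do not merge that way; the formula can only hold after restricting to the particular configurations that the true coproduct rule selects, which is precisely the content of the case analysis the paper delegates to \cite{lss10,pon10}. The high-level plan — reduce to Grassmannian elements via Theorem~\ref{theorem: j_0 is isomorphism} and Proposition~\ref{prop: type D special generators formula}, separate the color-symmetric part $2\,\specialgenD{n-1}\otimes\specialgenD{n-1}$ from the antisymmetric part $\pm 2\,\epsilon\otimes\epsilon$, and establish primitivity of $\epsilon$ by a sign-reversing involution — is the right shape and closely mirrors what the paper does (indeed, the paper mentions that a sign-reversing involution was introduced in \cite{pon10} to streamline the argument). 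But the argument cannot be completed as written until the correct coproduct formula from \cite[Proposition 7.1]{lss10} replaces the naive factorization sum.
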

\begin{proof}
See \textsection \ref{subsection: proof of coproduct formula}.
\end{proof}

 Based on the relations above and the scarcity of primitive elements, we conjecture that the Pieri elements $\specialgenD{i}$ and $\epsilon$ correspond with the $\sigma_i$ and $\epsilon$ of Bott \cite{bott58}, respectively.

\subsection{Type $D$ affine Stanley symmetric functions} Similar to the type $B$ case, define $\repkerDalg \in \mathbb{B}/\langle \epsilon \rangle$ as the image of $\repkerD$ under $j_0 \circ \PhiD$ (by abuse of notation, let $j_0$ also denote the isomorphism $H_*(\GrD)/\langle \xi_{\rho_{n-1}^{(1)}} - \xi_{\rho_{n-1}^{(1)}} \rangle \to \mathbb{B}/\langle \epsilon \rangle$ induced by $j_0$).  Then
\begin{equation}
\repkerDalg = \sum_{{\lambda_1 \leq 2n-2}} \specialgenD{\lambda_1} \specialgenD{\lambda_2} \cdots \otimes 2^{-p_{\geq n }(\lambda)} m_\lambda[Y].
\end{equation}

As in type $B$, we define $\ASSFD{w}[Y]$ by
\begin{align}
\repkerDalg &= \sum_{w \in \WD} A_w \otimes \ASSFD{w}[Y]\\
&= \sum_{w \in \Dgrass} \specialgenD{w} \otimes \ASSFD{w}[Y].
\end{align}

\subsection{Proofs of Propositions \ref{prop: type B special generators formula} and \ref{prop: type D special generators formula}}
\label{subsection: proof of special generators formula}

Propositions \ref{prop: type B special generators formula} and \ref{prop: type D special generators formula} are proved following the scheme laid out in \cite{lss10}.  Due to the close similarities between the special orthogonal and symplectic Pieri factors, many of the results have similar (and quite tedious) proofs; for brevity's sake, we briefly summarize only the main idea, and refer the reader to \cite{lss10, pon10} for full details.

The propositions are proved using Lemma \ref{lss10 lemma 4.7}. Because of that, we are mostly concerned with the set of Pieri factors, and so we define the set of \emph{Pieri covers} of $v \in \Z$ to be $C_v = \{w \in \Z \mid w \gtrdot v \}$.  Then Propositions \ref{prop: type B special generators formula} and \ref{prop: type D special generators formula} are proved by showing the following:

\begin{proposition}
\label{type free cover coroot sum}
Let $\Waf = \WB$ or $\WD$, and $\Z = \ZB$ or  $\ZD$, respectively.  Let $v \in \Z$ with $\ell(v) < 2n-1$ (resp. $\ell(v) < 2n-2$).  Then
\begin{equation}
\sum_{w \in C_v} 2^{\Dstat{w}} \asscovercoroot = 2^{\Dstat{v}}K
\end{equation}
where $K$ is the canonical central element.
\end{proposition}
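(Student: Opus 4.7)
The plan is to follow the strategy of \cite{lss10} for the type $C$ analogue, where the identity is verified by explicit enumeration of Pieri covers and their associated cover coroots. The ultimate payoff is that, once Proposition \ref{type free cover coroot sum} is established, Lemma \ref{lss10 lemma 4.7} applied to $a = \sum_{w \in \Z_r} 2^{\stat(w)-\chi(r<n)} A_w$ immediately gives $\specialgenB{r}, \specialgenD{r} \in \B$, so Propositions \ref{prop: type B special generators formula} and \ref{prop: type D special generators formula} follow.

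First, I would classify the Pieri covers $C_v = \{w \in \Z \mid w \gtrdot v\}$ of a given $v \in \Z$. Fixing a reduced word for $v$ and using the explicit description of the length-maximal generators in Definitions \ref{def: type B pieri factors} and \ref{def: type D pieri factors}, the covers split into two families: \emph{internal covers}, where $\Supp(w) = \Supp(v)$ and $\stat(w) = \stat(v)$, obtained by inserting one letter whose index already lies in $\Supp(v)$; and \emph{boundary covers}, where $\Supp(w) \supsetneq \Supp(v)$, obtained either by extending an interval of $\Supp(v)$ by one index or by merging two adjacent intervals. For each such $w = u s_i u^{-1} v$, the cover coroot $\asscovercoroot = u(\alpha_i^\vee)$ is computable as a sum of consecutive simple coroots $\alpha_j^\vee$ indexed by the letters between the inserted reflection and the appropriate boundary of $v$.

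Second, I would assemble the weighted sum. The internal covers, when summed, yield $2^{\stat(v)}$ times the sum of the simple coroots corresponding to letters strictly inside $\Supp(v)$, with the correct multiplicities to match those appearing in $K = \sum_{i \in \Iaf} a_i^\vee \alpha_i^\vee$. The boundary covers, each satisfying $\stat(w) = \stat(v) - 1$ (since they either extend across a complement component or fuse two intervals), contribute precisely the simple coroots indexed by the complement, again with the correct coefficients. The branching nodes $\{0,1\}$ in type $B$, and additionally $\{n-1,n\}$ in type $D$, each give rise to two boundary covers related by swapping the branch letter; the factor $2^{\stat}$ is designed precisely so that these paired contributions combine correctly with the marks $a_i^\vee$ of $K$.

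The main obstacle is the combinatorial bookkeeping: one must case-split on the position of $\Supp(v)$ relative to the branching nodes and the ``far end'' $s_n$ (resp.\ $s_{n-1}, s_n$) of the Dynkin diagram, since the shape of Pieri factors, and hence the available covers, changes qualitatively across these cases. In the type $D$ setting there is the further subtlety that the two length-$(n-1)$ Grassmannian Pieri factors $\rho_{n-1}^{(1)}$ and $\rho_{n-1}^{(2)}$ may both appear as covers of a common $v$, and the signed symmetries of Definition \ref{type D epsilon definition} must be shown to respect the cover-coroot accounting (so that the antisymmetric combination $\rho_{n-1}^{(1)} - \rho_{n-1}^{(2)}$ assembles into an element $\epsilon \in \B$). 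Since the case-by-case verification is long and closely parallels that of \cite[\S 5]{lss10}, I would record the inductive framework and the two or three structurally distinct cases in detail and refer the reader to \cite{lss10, pon10} for the remaining verifications.
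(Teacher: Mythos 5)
Your approach matches the paper's: follow the \cite{lss10} scheme, classify Pieri covers via canonical reduced words, verify the cover-coroot identity by case analysis, and defer the lengthy verification to \cite{lss10,pon10}. However, the structural claim that every boundary cover satisfies $\stat(w) = \stat(v) - 1$ is false. For instance, with $n \geq 3$ in type $B$, $v = s_0$ has $\Supp(v) = [0,1]$ with complement $[2,n]$ a single component, while its boundary covers $s_2 s_0,\, s_0 s_2 \in \ZB$ have $\Supp = [0,2]$ with complement $[3,n]$ still a single component, so $\stat(w) = \stat(v)$. A boundary cover drops $\stat$ by one only when it consumes an entire complement component; when it merely extends into a longer complement interval, $\stat$ is unchanged. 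Consequently, the heuristic you give for why the weighted coroots assemble into $2^{\stat(v)}K$ does not hold as stated, and the bookkeeping must instead track covers across both values $\stat(w) \in \{\stat(v), \stat(v)-1\}$, with the powers of $2$ and the comarks $a_i^\vee$ of $K$ balancing in a more intricate way. The paper also records a cover type with no analogue in the type $C$ analysis of \cite{lss10} (the ``extra-special cover''), which your internal/boundary dichotomy does not obviously account for; both issues would surface in the case-by-case verification you defer.
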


For the type $D$ case, we also need the following lemma.

\begin{lemma}
\label{epsilon cover coroot sum}
Let $v \in \ZD$ with $\ell(v) = n-2$, and let $C_v = \{w \in \ZD \mid w \gtrdot v \}$.  If $\epsilon = \sum_w c_w A_w$, then $\sum_{w \in C_v} c_w = 0$.
\end{lemma}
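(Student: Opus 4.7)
The plan is to pair the nonzero terms of $\sum_{w \in C_v} c_w$ into sign-opposite pairs using the defining symmetries of $\epsilon$, verifying the pairing by a case analysis on the structure of $v \in \ZD$ with $\ell(v) = n-2$.

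First, I would observe that $c_w \neq 0$ only when $\Supp(w) = \Iaf$. For a cover $w$ of length $n-1$, this is restrictive: $\supp(w)$ must reach both forks $\{0,1\}$ and $\{n-1,n\}$ of the affine $D_n$ Dynkin diagram. The two Grassmannian covers available at this length are $\rho_{n-1}^{(1)}$ and $\rho_{n-1}^{(2)}$, which by construction carry opposite coefficients $+1$ and $-1$ in $\epsilon$; this prototype cancellation is what I aim to extend to every $v$.

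Next, I would classify the length-$(n-2)$ Pieri factors $v \in \ZD$ using Definition \ref{def: type D pieri factors}. Pieri factors are Bruhat-lower elements of the explicit generators listed there, so up to Dynkin diagram automorphisms there are only a handful of combinatorial shapes to consider. For each shape, I would enumerate the covers $w \in C_v$, including those produced by non-simple reflections, discard those with $\Supp(w) \subsetneq \Iaf$, and then apply the sign-flipping symmetries (3), (4), (5) of Definition \ref{type D epsilon definition}: the swaps $s_0 \leftrightarrow s_1$ or $s_{n-1} \leftrightarrow s_n$ inside a reduced word, together with the operations moving $s_n$ or $s_{n-1}$ between the two ends of a word. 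These symmetries will be used to build an involution on the set of nonzero covers of $v$ whose orbits pair covers satisfying $c_w = -c_{w'}$, so that the sum telescopes to $0$.

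The main obstacle is combinatorial bookkeeping. Covers of $v$ arising from non-simple reflections can alter $\supp$ in ways that break the most natural pairings, and for certain $v$ whose support does not touch both forks, a single symmetry may not suffice; swaps of $\{0,1\}$ and of $\{n-1,n\}$ will need to be combined, or a more subtle involution built from symmetries (4) and (5) invoked. Verifying that the resulting involution is fixed-point free on the set of relevant covers in every case is where the bulk of the work lies. The argument parallels the type $C$ Pieri cover cancellation of \cite{lss10}, with full computational details following the template in \cite{pon10}.
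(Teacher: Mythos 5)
Your proposal takes essentially the same approach as the paper: classify the Pieri covers of $v$ via reduced-word analysis and verify the cancellation case by case, with the lengthy verification deferred to the computations in \cite{pon10}, exactly as the paper itself does. The framing as a sign-reversing involution built from the symmetries of $\epsilon$ is consistent with the device the paper notes was introduced in \cite{pon10}, and your opening observation that $\Supp(w) = \Iaf$ together with $\ell(w) = n-1$ forces $\supp(w)$ to consist of all of $\{2,\ldots,n-2\}$ plus exactly one letter from each fork is the right structural starting point; just be aware that for covers obtained by inserting a middle letter (where all covers share the same support), the relevant sign-flipping pairing comes from symmetries (4) and (5) (moving $s_{n-1}$ or $s_n$ between the ends), not from symmetry (3).
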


In general, the proof of Proposition \ref{type free cover coroot sum} proceeds by analyzing very specifically the types of reduced words that elements of $\Z$ may have.  Given an element $v$ of $\Z$ of any type, it is possible to define a canonical reduced word for $v$.  One can then classify all possible Bruhat covers in $\Z$ of $v$, obtained by inserting ``missing'' letters into the normal reduced word for $v$.  Through lengthy, case-by-case calculations, it is possible to verify Proposition \ref{type free cover coroot sum} and Lemma \ref{epsilon cover coroot sum}.  Given the approach in \cite{lss10} and Definitions \ref{def: type B pieri factors} and \ref{def: type D pieri factors}, the necessary changes to prove the types $B$ and $D$ cases are mostly clear.  One small change is the necessary addition of an \emph{extra-special cover} when classifying the covers of a given element $v$ (see \cite{pon10}).

\subsection{Proofs of Theorems \ref{theorem: type B coproduct formula} and \ref{theorem: type D coproduct formula}}
\label{subsection: proof of coproduct formula}

The coproduct formulas for type $B$ and $D$ Pieri elements can also be proved using generalizations of the approach in \cite{lss10}.  In \cite{pon10}, we introduced an additional sign-reversing involution to clean up the proof; however, this is not strictly necessary.  Therefore, again for brevity's sake, we summarize the main results and give a general sketch of the proof.  Readers who would like details are referred to the references listed above.

In \cite{lss10}, a type-free coproduct rule for elements $A_w$ of the nilCoxeter algebra is given (\cite{lss10}, Proposition 7.1), involving operations on reduced words of $w$.  By Proposition \ref{prop: type B special generators formula} and Theorem \ref{theorem: j_0 is isomorphism}, we can reduce to considering only Grassmannian elements, which allows us to classify terms that will appear in $\Delta(A_w)$.  Again, careful case-by-case analysis shows that the proper terms appear.

\section{Proofs of Main Theorems}
\label{section: proof of main theorems}

\subsection{Type $B$} Recall that a length-decreasing factorization of $w$ is a factorization $w = v^1 v^2 \cdots v^s$ such that $\ell(v^1) \leq \ell(v^2) \leq \cdots \leq \ell(v^s)$.  A \emph{maximal} length-decreasing factorization is one such that each $v^i$ is as large as possible given $v^{i+1}, \ldots, v^s$.

\begin{lemma}
\label{type B maximal factorization}
A maximal length-decreasing factorization of $w \in \Bgrass$ into elements of $\ZB$ is a factorization of $w$ into segments.
\end{lemma}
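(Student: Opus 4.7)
The plan is to argue that any maximal length-decreasing factorization of $w \in \Bgrass$ into Pieri factors must coincide, factor by factor, with the unique Billey--Mitchell segment factorization of $w$. Let $w = u_1 u_2 \cdots u_s$ denote the latter, with $\ell(u_1) \leq \ell(u_2) \leq \cdots \leq \ell(u_s)$. Since every segment is a Pieri factor (see the discussion following the definition of segments), this is already a length-decreasing factorization into elements of $\ZB$; only maximality needs to be verified.

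The key ingredients I would use are: (i) any right factor of a Grassmannian element is Grassmannian; (ii) an element of $\ZB \cap \Bgrass$ must be a segment, by the remark following Billey--Mitchell; and (iii) the Billey--Mitchell segment factorization is unique, and each initial product $u_1 \cdots u_i$ is itself Grassmannian. Let $w = v^1 v^2 \cdots v^s$ be any maximal length-decreasing factorization into Pieri factors. By (i), each suffix $v^i v^{i+1} \cdots v^s$ lies in $\Bgrass$; in particular $v^s \in \ZB \cap \Bgrass$, so by (ii), $v^s$ is a segment. The maximality of $v^s$ together with (iii) then forces $v^s$ to be the largest segment right factor of $w$, which is $u_s$.

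Setting $v^s = u_s$, the left factor $v^1 \cdots v^{s-1}$ equals $u_1 \cdots u_{s-1}$ and therefore lies in $\Bgrass$ by (iii). I would then proceed by reverse induction on the index, applying the same analysis to $v^1 \cdots v^{s-1}$ as the new starting Grassmannian element. The principal subtlety -- and the only place one has to be careful -- is that the outer length-decreasing constraint $\ell(v^{s-1}) \leq \ell(v^s) = \ell(u_s)$ could in principle shrink the admissible set for $v^{s-1}$ relative to an unconstrained maximization within the subproblem. However, the Billey--Mitchell factorization is itself length-decreasing, so $\ell(u_{s-1}) \leq \ell(u_s)$, and hence the unconstrained maximizer $u_{s-1}$ already satisfies the cap. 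Thus the two maximization conditions agree, and $v^{s-1} = u_{s-1}$.

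The main (and rather mild) obstacle is this bookkeeping of how the maximality condition interacts with the length-decreasing constraint from one inductive step to the next; once the length-monotonicity of the Billey--Mitchell factorization is invoked, no further machinery is needed. Iterating yields $v^i = u_i$ for every $i$, establishing the lemma.
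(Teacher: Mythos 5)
The core gap is in your ingredient (iii). You assert that each \emph{left} partial product $u_1\cdots u_i$ of the Billey--Mitchell factorization lies in $\Bgrass$, and you use this to restart the recursion. This misreads the Billey--Mitchell statement: the segment factorization is written $r(w) = \cdots\Sigma_0^{c_3}(\lambda_3)\Sigma_1^{c_2}(\lambda_2)\Sigma_0^{c_1}(\lambda_1)$ and is built up from the right, so their ``initial products'' are the \emph{right} partial products $u_i\cdots u_s$, which are the ones that land back in $\Bgrass$. The left partial product $u_1\cdots u_{s-1}$ generically ends in the $1$-segment $u_{s-1}$, hence has $s_1$ as a right descent and is \emph{not} $0$-Grassmannian. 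This matters, because your step from $v^s$ to $v^{s-1}$ invokes ingredient (ii) ($\ZB\cap\Bgrass$ elements are segments), which only applies to $0$-Grassmannian elements. The segments alternate between $0$-segments and $1$-segments, and the induction must alternate with them; as written, your recursion is applied to an element it cannot legitimately be applied to.

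A second, smaller gap: you assert that maximality ``forces $v^s$ to be the largest segment right factor of $w$, which is $u_s$.'' The first half is fine once you know $v^s$ is a segment, but the identification with the Billey--Mitchell first factor $u_s$ is exactly the greedy-is-optimal claim and is not a consequence of the uniqueness statement you quote; you would need to argue it separately (and the lemma does not actually require it). The paper's own proof avoids both issues by not invoking Billey--Mitchell as a black box: it argues directly by reverse induction that each $v^i$ must end with $s_1$ (resp.\ $s_0$) and have that as its only descent, using Grassmannianness of $w$ together with maximality of $v^{i+1}$, and then observes that there is a unique $1$-Grassmannian (resp.\ $0$-Grassmannian) element of $\ZB$ of each length, namely the segment. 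This direct argument handles the $0/1$ alternation explicitly and never needs the left partial products to be Grassmannian nor the factorization to coincide with the Billey--Mitchell one.
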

\begin{proof}
The only elements of $\ZB$ that are not segments are elements containing both $s_0$ and $s_1$ adjacent to each other.  Suppose $w = v^1 v^2 \cdots v^s$ is a length-decreasing factorization of $w$ into elements of $\ZB$.  By induction, suppose $v^{i+1}$ is a segment, beginning with $0$ (resp. $1$).  Then if $v^i = \cdots s_k$, we must have $k = 1$ (resp. $k=0$).  If not, then $w$ is either not Grassmannian or $v^{i+1}$ is not of maximal size.  By the same reasoning, $1$ (resp. $0$) must be the only descent of $v^i$, so that $v^i$ is $1$-Grassmannian (resp. $0$-Grassmannian).  There is only one $1$-Grassmannian (resp. $0$-Grassmannian) element in $\ZB$ of each given length, and these are exactly the segments.
\end{proof}

Billey and Mitchell give a bijection between $w \in \Bgrass$ and the affine type $B$ partitions, $\Bpar$.  Their bijection is given by taking a Grassmannian $w$ and factoring it into segments; the lengths of the segments give the associated partition.  By Lemma \ref{type B maximal factorization}, this is the same as factoring a Grassmannian $w$ into elements of $\ZB$.

\begin{proposition}
\label{type B ASSF are linearly independent}
The functions $\{ \ASSFB{w} \mid w \in \Bgrass \}$ are linearly independent.
\end{proposition}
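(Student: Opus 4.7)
The plan is to establish linear independence of $\{\ASSFB{w}\}_{w \in \Bgrass}$ by exhibiting an invertible transition matrix between these functions and the basis $\{R_\lambda\}_{\lambda \in \Bpar}$ of $\cohomB$. The key input is that $\repkerBalg$ admits two expansions: by construction one has $\repkerBalg = \sum_{w \in \Bgrass} \specialgenB{w} \otimes \ASSFB{w}$, while $\repkerBalg = (j_0 \circ \PhiB \otimes \mathrm{id})(\repkerB)$ with $\repkerB = \sum_{\lambda \in \Bpar} q_\lambda'[X]\,R_\lambda[Y]$ gives the alternative expansion $\sum_{\lambda \in \Bpar}\bigl(\specialgenB{\lambda_1}\specialgenB{\lambda_2}\cdots\bigr) \otimes R_\lambda$. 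Equating the two expressions and using linear independence of $\{\specialgenB{w}\}_{w \in \Bgrass}$ in $\mathbb{B}$ (Theorem \ref{theorem: j_0 is isomorphism}), one obtains
$$
\ASSFB{w} \;=\; \sum_{\lambda \in \Bpar} M_{\lambda, w}\,R_\lambda,
$$
where $M_{\lambda, w}$ is the coefficient of $\specialgenB{w}$ in the product $\specialgenB{\lambda_1}\cdots\specialgenB{\lambda_s}$. Linear independence of the $\ASSFB{w}$ is therefore equivalent to invertibility of the matrix $M$.

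The second step is to establish this invertibility by a triangularity argument. The Billey--Mitchell bijection $\Bpar \leftrightarrow \Bgrass$ makes $M$ square in each degree; for $\lambda \in \Bpar$ let $w_\lambda$ be the corresponding Grassmannian element. By Lemma \ref{type B maximal factorization}, the segment factorization $w_\lambda = v^1 v^2 \cdots v^s$ is simultaneously a $\ZB$-factorization whose factor lengths match the parts of $\lambda$, so the coefficient of $A_{w_\lambda}$ in $\specialgenB{\lambda_1}\cdots$ receives the positive contribution $\prod_i 2^{\stat(v^i) - \chi(\lambda_i < n)}$. Combined with the characterization $\specialgenB{u} \in A_u + \sum_{u' \notin \Bgrass} \mathbb{Z}A_{u'}$ from Theorem \ref{theorem: j_0 is isomorphism}, this gives $M_{\lambda, w_\lambda} > 0$, so the diagonal of $M$ is strictly positive. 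Triangularity then follows by again invoking the maximality in Lemma \ref{type B maximal factorization}: any $\ZB$-factorization of $w_\mu$ whose length sequence rearranges to a partition $\lambda \neq \mu$ must arise by splitting segments of $w_\mu$ into shorter $\ZB$-factors, placing $\lambda$ below $\mu$ in a natural refinement order on $\Bpar$; $M$ is then triangular with respect to that order and therefore invertible.

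The main obstacle is to formalize and verify the refinement-order claim in the second step, which amounts to a case analysis of the possible $\ZB$-factorizations of a Grassmannian element, with particular care for the non-segment elements of $\ZB$ (those in which $s_0$ and $s_1$ appear adjacent). This is a routine but technical extension of the type $C$ argument of \cite{lss10}. An equivalent route that avoids the direct triangularity check is to show that the family $\{\specialgenB{\lambda_1}\specialgenB{\lambda_2}\cdots\}_{\lambda \in \Bpar}$ is linearly independent in $\mathbb{B}$; by the rank equality $\mathrm{rk}\,\mathbb{B} = |\Bpar|$ in each degree (Theorem \ref{theorem: j_0 is isomorphism} combined with Proposition \ref{prop: type B homology spanning set}), this is equivalent to showing these products span $\mathbb{B}$, i.e.\ that $\mathbb{B}$ is generated as a commutative algebra by the Pieri elements $\specialgenB{1}, \ldots, \specialgenB{2n-1}$.
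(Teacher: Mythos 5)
Your proposal reaches the same core idea — triangularity of a transition matrix with $\Bpar$-indexed rows and $\Bgrass$-indexed columns, with nonzero diagonal because the Billey--Mitchell segment factorization of $w_\lambda$ exists — but you route it through the reproducing kernel and the $R_\lambda$ basis, so your matrix $M$ records the coefficient of $\specialgenB{w}$ in the product $\specialgenB{\lambda_1}\cdots\specialgenB{\lambda_s}$. The paper instead expands $\ASSFB{w}$ directly in monomial symmetric functions from Definition \ref{def: affine Stanley symmetric functions}: a factorization of $w$ with lengths forming the partition $\mu$ contributes to the coefficient of $m_\mu$, so by Lemma \ref{type B maximal factorization} the lexicographically leading monomial is $m_{\lambda(w)}$ where $\lambda(w)$ is the Billey--Mitchell partition, and distinct $w$ give distinct $\lambda(w)$. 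Since $R_\lambda$ is (modulo the relevant ideal) a scalar multiple of $m_\lambda$, your matrix $M$ and the paper's $(a_{\mu,\lambda(w)})$ differ only by a diagonal change of basis, so the two arguments are essentially one argument in two coordinate systems; yours is a bit more machinery for the same conclusion.

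The genuine weak point is the triangularity step, which you yourself flag. The ``refinement order'' mechanism you describe — that any $\ZB$-factorization of $w_\mu$ ``must arise by splitting segments of $w_\mu$'' — is not literally correct: the cut points of an arbitrary length-additive $\ZB$-factorization of $w_\mu$ need not align with the segment boundaries, because a factor can straddle two segments (the right factor $v^s$ is a suffix of the longest segment, but then $w_\mu(v^s)^{-1}$ is no longer Grassmannian and the recursion breaks down cleanly). The order that actually works is lexicographic, via the greedy principle: the length of the longest Pieri-factor right factor of a Grassmannian $w_\mu$ is exactly $\mu_1$ (any Pieri right factor of a Grassmannian element is itself a segment, and Billey--Mitchell guarantee $\Sigma_0(\mu_1)$ is the longest such), so $M_{\lambda,w_\mu}=0$ whenever $\lambda >_{\mathrm{lex}}\mu$, which combined with the positive diagonal gives invertibility. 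Your alternative suggestion (show that $\mathbb{B}$ is generated by $\specialgenB{1},\ldots,\specialgenB{2n-1}$) is not an easier escape: that statement is downstream of the isomorphism $\homB\cong H_*(\GrB)$, whose proof in the paper itself relies on this very proposition, so one still needs the triangularity argument in some form.
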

\begin{proof}
By Lemma \ref{type B maximal factorization}, $\ASSFB{w} = \sum_{\mu \leq \lambda(w)} a_{\mu,\lambda(w)} m_\mu$, where $\lambda(w)$ is the largest partition associated to a factorization of $w$ into Pieri factors, and $\leq$ is lexicographic ordering on partitions.  Further, if $w \neq v$, then $\lambda(w) \neq \lambda(v)$, so $A = (a_{\mu, \lambda})$ is triangular.
\end{proof}

There is a surjective ring homomorphism $\theta: \Lambda \to \Gamma_*$ defined by $\theta(h_i) = Q_i$ (see \cite[Ex. III.8.10]{mac95}).  Let $\iota : \Gamma^* \to \Lambda$ be the inclusion map.

\begin{lemma}[\cite{lss10},Lemma 2.1]
\label{hall inner product versus hall-littlewood inner product}
Given $f \in \Gamma^*$, $g \in \Lambda$, $\langle \iota(f), g \rangle = [ f, \theta(g)].$
\end{lemma}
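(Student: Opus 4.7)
The plan is to reduce the bilinear identity to a single combinatorial equality by bilinearity and then derive that equality from two known expansions of the reproducing kernel $\Omega_{-1}$. Since $\{P_\lambda\}_{\lambda \in \mathcal{SP}}$ is a basis of $\Gamma^*$, $\{h_\mu\}_{\mu \in \mathcal{P}}$ is a basis of $\Lambda$, and $\theta(h_\mu) = q_\mu$ by multiplicativity of $\theta$, bilinearity reduces the claim to
$$
\langle P_\lambda, h_\mu \rangle = [P_\lambda, q_\mu]
$$
for each strict partition $\lambda$ and each partition $\mu$.

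Next I would identify each side with an entry of a change-of-basis matrix. By the standard duality $\langle h_\mu, m_\nu\rangle = \delta_{\mu\nu}$, the left-hand side equals the coefficient $K_{\lambda\mu}$ of $m_\mu$ in the monomial expansion of $P_\lambda$. For the right-hand side, expand $q_\mu \in \Gamma_*$ in the Schur $Q$-basis as $q_\mu = \sum_{\nu \in \mathcal{SP}} c_{\mu\nu}\, Q_\nu$; using the symmetry of the Hall--Littlewood pairing together with $[Q_\nu, P_\lambda] = \delta_{\nu\lambda}$ gives $[P_\lambda, q_\mu] = c_{\mu\lambda}$. Thus the lemma reduces to the matrix identity $K_{\lambda\mu} = c_{\mu\lambda}$.

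This last equality I would extract directly from the two expansions of $\Omega_{-1}$ recorded in the background section of the paper:
$$
\Omega_{-1}(X, Y) = \sum_{\lambda \in \mathcal{SP}} Q_\lambda(X)\, P_\lambda(Y) = \sum_{\mu \in \mathcal{P}} q_\mu(X)\, m_\mu(Y).
$$
Substituting the monomial expansion $P_\lambda(Y) = \sum_\mu K_{\lambda\mu}\, m_\mu(Y)$ into the first sum and equating coefficients of $m_\mu(Y)$ on both sides yields
$$
q_\mu(X) = \sum_{\lambda \in \mathcal{SP}} K_{\lambda\mu}\, Q_\lambda(X),
$$
so $c_{\mu\lambda} = K_{\lambda\mu}$, as required.

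The proof is largely bookkeeping and I do not foresee a serious obstacle: the only subtleties are to note that the coefficients $c_{\mu\nu}$ actually exist (immediate since $q_\mu \in \Gamma_*$ and $\{Q_\nu\}_{\nu \in \mathcal{SP}}$ is a $\mathbb{Q}$-basis of $\Gamma_*$) and to justify exchanging the order of the entries in $[\cdot,\cdot]$, which is valid because this pairing is the restriction of the Macdonald Hall--Littlewood inner product at $t = -1$ and hence symmetric.
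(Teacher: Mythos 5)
Your proof is correct. The paper itself does not supply an argument for this lemma---it is cited to \cite{lss10}, Lemma~2.1---so there is no internal proof to compare against, but your argument is sound and self-contained. The reduction by bilinearity to $f = P_\lambda$, $g = h_\mu$ is legitimate since $\{P_\lambda\}$ and $\{h_\mu\}$ are bases of $\Gamma^*$ and $\Lambda$ respectively; the identification $\theta(h_\mu) = q_\mu$ follows from $\theta$ being a ring homomorphism with $\theta(h_i) = Q_i$; and the extraction of $K_{\lambda\mu} = c_{\mu\lambda}$ from the two kernel expansions
\[
\Omega_{-1} = \sum_{\lambda \in \mathcal{SP}} Q_\lambda(X)\,P_\lambda(Y) = \sum_{\mu \in \mathcal{P}} q_\mu(X)\,m_\mu(Y)
\]
is valid because the $m_\mu(Y)$ are linearly independent, so coefficients in $Y$ may be compared. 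You were right to flag the symmetry of the pairing $[\cdot,\cdot]$ as the one point needing comment: the paper's definition is as a pairing $\Gamma_* \times \Gamma^* \to \mathbb{Z}$, yet the lemma writes $[f,\theta(g)]$ with $f \in \Gamma^*$ first; this is harmless precisely because $[\cdot,\cdot]$ is the restriction of the symmetric $t=-1$ Hall--Littlewood form, as you observe. An equivalent phrasing of what you proved is that $\theta$ is the adjoint of $\iota$ with respect to the two pairings, which is how the statement is typically understood.
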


\begin{proposition}
\label{type B ASSF span dual space}
$\cohomB$ is spanned by $\{ \ASSFB{w} \mid w \in \WB^0 \}$.
\end{proposition}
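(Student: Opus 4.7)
The plan is to combine the linear independence already established in Proposition \ref{type B ASSF are linearly independent} with a graded dimension count, after first verifying that each $\ASSFB{w}$ actually lies in $\cohomB$.

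First I would verify containment $\ASSFB{w} \in \cohomB$ for every $w \in \Bgrass$. By construction, $\repkerBalg$ is the image of $\repkerB$ under $(j_0 \circ \PhiB) \otimes \mathrm{id}$, and \eqref{eq: repkerBalg} expresses it as $\sum_{w \in \Bgrass} \specialgenB{w} \otimes \ASSFB{w}[Y]$. Since the $Y$-side of $\repkerB$ only involves $m_\lambda[Y]$ for $\lambda \in \Bpar$ (i.e.\ $\lambda_1 \leq 2n-1$), each $\ASSFB{w}[Y]$ is a $\mathbb{Q}$-linear combination of such $m_\lambda$'s, and therefore lies in $\cohomB \subset \Gamma^*/\langle m_\lambda \mid \lambda_1 \geq 2n \rangle$. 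Because the $\specialgenB{w}$ with $w \in \Bgrass$ form a basis of $\mathbb{B}$ (Theorem \ref{theorem: j_0 is isomorphism}), reading off the coefficient of $\specialgenB{w}$ on both sides indeed identifies $\ASSFB{w}[Y]$ as an element of $\cohomB$.

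Second I would compare graded dimensions. Proposition \ref{prop: type B homology spanning set} gives $\dim_{\mathbb{Q}} \homB[k] = |\{\lambda \in \Bpar : |\lambda|=k\}|$, so by Hopf duality $\dim_{\mathbb{Q}} \cohomB[k] = |\Bpar_k|$ as well. On the other hand, Lemma \ref{type B maximal factorization} combined with the Billey--Mitchell factorization of a Grassmannian element into segments yields a length-preserving bijection $\Bgrass \leftrightarrow \Bpar$, so $|\{w \in \Bgrass : \ell(w)=k\}| = |\Bpar_k|$ as well.

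Finally, Proposition \ref{type B ASSF are linearly independent} shows the family $\{\ASSFB{w}\}_{w \in \Bgrass}$ is linearly independent, and from step one every such function lies in $\cohomB$. In each fixed degree $k$ we have exactly $|\Bpar_k|$ linearly independent elements inside a space of dimension $|\Bpar_k|$, so they form a basis of $\cohomB[k]$. Taking the direct sum over all $k$ proves the proposition. The only subtle step is the containment argument, since one must be careful that reducing modulo $I^{(2n-1)}$ on the $Y$-side of $\Omega_{-1}$ is compatible with the quotient defining $\cohomB$; all the rest is bookkeeping built on previously established results.
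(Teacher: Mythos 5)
The approach is genuinely different from the paper's, and it has a gap. The paper proves spanning by directly computing the Hall--Littlewood pairing $[\ASSFB{w}, q_\lambda']$ against the $\mathbb{Z}$-basis $\{q_\lambda'\}_{\lambda \in \Bpar}$ of $\homB$: via Lemma \ref{hall inner product versus hall-littlewood inner product} and the known coefficient of $m_\lambda$ in $\ASSFB{w}$, the pairing matrix (indexed by the Billey--Mitchell bijection, ordered lexicographically) has $1$'s on the diagonal and $0$'s above. This unitriangularity is exactly what makes the elements span $\cohomB = \mathrm{Hom}_{\mathbb{Z}}(\homB, \mathbb{Z})$ as a $\mathbb{Z}$-module.

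Your argument instead stacks containment, linear independence (Proposition \ref{type B ASSF are linearly independent}), and a graded rank count. The difficulty is that $\cohomB$ is a free $\mathbb{Z}$-module, not a $\mathbb{Q}$-vector space, and ``spanned'' here must be read over $\mathbb{Z}$ --- this proposition is later used to conclude that $\PsiB : H^*(\GrB) \to \cohomB$ is a $\mathbb{Z}$-module isomorphism in Theorem \ref{theorem: type B cohomology isomorphism}. Your final step, that a family of linearly independent elements inside a free $\mathbb{Z}$-module of matching rank is automatically a basis, is false: $\{2\}$ has rank $1$ in $\mathbb{Z}$ but does not span. What is missing is unimodularity of the change-of-basis matrix, and that is precisely the content of the paper's computation $[\ASSFB{w_\lambda}, q_\lambda'] = 1$. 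A related soft spot is your containment step: showing that $\ASSFB{w}$ is a $\mathbb{Q}$-linear combination of $m_\lambda$ with $\lambda_1 \leq 2n-1$ only places it in $\cohomB \otimes \mathbb{Q}$; to land in the $\mathbb{Z}$-lattice $\cohomB$ one needs $[\ASSFB{w}, q_\lambda'] \in \mathbb{Z}$ for all $\lambda \in \Bpar$, which requires expanding $\repkerBalg$ as $\sum_{\lambda} \specialgenB{\lambda_1}\specialgenB{\lambda_2}\cdots \otimes R_\lambda$ and using that the $\specialgenB{r}$ have integer coefficients in the $A_w$, rather than just inspecting the $Y$-side monomials.
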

\begin{proof}

Given $\lambda \in \Bpar$, consider $g' = 2^{\numpartsgtrn(\lambda)}h_\lambda$, so that $\theta(g') = q_\lambda' $.  Let $w \in \Bgrass$ such that the maximal factorization of $w$ into segments corresponds to $\lambda$ under the bijection of \cite{bm08}.  Then it is easy to see that the coefficient of $m_{\lambda}$ in $\ASSFB{w}$ is $2^{-\numpartsgtrn(\lambda)}$.  By Lemma \ref{hall inner product versus hall-littlewood inner product}, $[\ASSFB{w},g] = \langle \ASSFB{w}, g' \rangle = 1$.  Furthermore, given $\psi \geq \lambda$ (in lexicographic order), with $\psi \in \Bpar$ and $g'' = 2^{\numpartsgtrn(\psi)} Q_{\psi_1}Q_{\psi_2}\cdots$, we have $[\ASSFB{w},g''] = 0$.  Therefore, $\cohomB = \textrm{Hom}_\mathbb{Z}(\homB,\mathbb{Z})$ is spanned by $\{ \ASSFB{w} \mid w \in \WB^0 \}$.

\end{proof}

\begin{proof}[Proof of Theorem \ref{theorem: type B cohomology isomorphism}]
Given Proposition \ref{prop: type B special generators relation}, Theorem \ref{theorem: type B coproduct formula}, and Theorem \ref{theorem: j_0 is isomorphism}, we may borrow the proof of Theorem 1.3 of \cite{lss10}, and proceed analogously.  We have that $\PhiB:\homB \to H_*(\textrm{Gr}_{SO_{2n+1}(\mathbb{C})})$ is a bialgebra morphism.  Since both are graded commutative and cocommutative Hopf algebras, it must be a Hopf algebra morphism.  We defined $\PsiB:H^*(\textrm{Gr}_{SO_{2n+1}(\mathbb{C})}) \to \cohomB$ by $\xi^w \to \ASSFB{w}$ for $w \in \Bgrass$.  We show that $\PhiB$ and $\PsiB$ are dual with respect to the pairing $\langle \cdot, \cdot \rangle : H_*(\textrm{Gr}_{SO_{2n+1}(\mathbb{C})}) \times H^*(\textrm{Gr}_{SO_{2n+1}(\mathbb{C})}) \to \mathbb{Z}$ induced by the cap product the pairing $[\cdot,\cdot]: \homB \times \cohomB \to \mathbb{Z}$.  We want to show that $\langle \PhiB(f), \xi^w \rangle = [f, \PsiB(\xi^w)]$, for all $f$ in a spanning set of $\homB$.  We have:
\begin{align}
[2^{\numpartsgtrn(\lambda)}Q_{\lambda_1}\cdots Q_{\lambda_l},\PsiB(\xi^w)] & =  [2^{\numpartsgtrn(\lambda)}Q_{\lambda_1}\cdots Q_{\lambda_l}, \ASSFB{w}]\\
&=[2^{\numpartsgtrn(\lambda)}Q_{\lambda_1}\cdots Q_{\lambda_l}, \langle \Omega_{-1}^\mathbb{B}, \xi^w \rangle ]\\
&= \langle [ 2^{\numpartsgtrn(\lambda)}Q_{\lambda_1}\cdots Q_{\lambda_l}, \Omega_{-1}^\mathbb{B}], \xi^w \rangle \\
&= \langle \specialgenB{\lambda_1} \cdots \specialgenB{\lambda_\ell}, \xi^w \rangle \\
&= \langle \PhiB(2^{\numpartsgtrn(\lambda)}Q_{\lambda_1}\cdots Q_{\lambda_l}), \xi^w \rangle.
\end{align}
The second equality holds by identifying $\mathbb{B}$ and $H_*(\Gr)$ and \eqref{eq: repkerBalg}.

 The fourth equality holds by \eqref{eq: type B omega as sum over partitions} and (2.24) of \cite{lss10}, which states that 
 
 \noindent $[2^{\numpartsgtrn(\lambda)} Q_{\lambda_1} \cdots Q_{\lambda_\ell}, f ] = 2^{\numpartsgtrn(\lambda)}$ times the coefficient of $m_\lambda$ in $f$.  The other equalities hold by definition.

 Therefore, $\PsiB$ is a Hopf algebra morphism.  By Lemmas \ref{type B ASSF are linearly independent} and \ref{type B ASSF span dual space}, $\PsiB$ is a bijection; therefore, it is an isomorphism.
 
 \end{proof}

\subsection{Positivity of type $B$ $k$-Schur functions}
\label{subsection: positivity of type B k-Schurs}

In \cite{lam09}, Lam offers a point of view relating geometric positivity to Schur-positivity (or Schur $P$-positivity) of symmetric functions, based on the following theorem.  Suppose we have an embedding of affine Grassmannians, $\iota: H_*(\Gr) \to H_*(\textrm{Gr}_{G'})$.  Then:

\begin{theorem}[\cite{lam09}]
\label{theorem: lam09, theorem 1}
For any $v \in \Wgrass$, the pushforward $\iota_*(\xi_{v}) \in H_*(\Gr)$ of a Schubert class is a nonnegative linear combination of Schubert classes $\{ \xi_w \mid w \in (\widetilde{W}')^0 \}$ of $H_*(\textrm{Gr}_{G'})$.
\end{theorem}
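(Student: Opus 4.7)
My plan is to prove the theorem by realizing $\iota_*(\xi_v)$ as the fundamental class of an effective cycle in $\textrm{Gr}_{G'}$ and then extracting each Schubert coefficient as an honest count of transverse intersection points, hence as a nonnegative integer.

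\textbf{Step 1: identify the pushforward with an effective cycle.} Since $\iota:G\to G'$ is a closed immersion of complex algebraic groups, the induced map $\iota:\textrm{Gr}_G\to\textrm{Gr}_{G'}$ is a closed embedding of projective ind-varieties. The Schubert variety $X_v\subset\textrm{Gr}_G$ is irreducible and projective, so $\iota(X_v)$ is an irreducible closed subvariety of $\textrm{Gr}_{G'}$ of the same (complex) dimension $\ell(v)$. Thus $\iota_*(\xi_v)=[\iota(X_v)]\in H_{2\ell(v)}(\textrm{Gr}_{G'})$ is the class of an effective algebraic cycle.

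\textbf{Step 2: expand in the Schubert basis via the cap pairing.} Write $\iota_*(\xi_v)=\sum_{w\in(\widetilde{W}')^0}c_w\,\xi_w$. Since the Schubert bases $\{\xi_w\}$ of $H_*(\textrm{Gr}_{G'})$ and $\{\xi^w\}$ of $H^*(\textrm{Gr}_{G'})$ are dual under the cap pairing $\langle\cdot,\cdot\rangle$, the coefficient of interest is
\begin{equation*}
c_w=\langle\iota_*(\xi_v),\xi^w\rangle.
\end{equation*}
The cohomology class $\xi^w$ is represented by the (finite-codimension) opposite Schubert variety $X^w\subset\textrm{Gr}_{G'}$, whose codimension equals $\ell(w)$. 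Since $X^w$ has complementary dimension to $X_v$ precisely when $\ell(w)=\ell(v)$, we need only consider such $w$.

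\textbf{Step 3: reduce to a finite-dimensional intersection problem and apply Kleiman transversality.} Both $\iota(X_v)$ and $X^w\cap X_N$ (where $X_N\subset\textrm{Gr}_{G'}$ is a sufficiently large finite-dimensional Schubert variety containing $\iota(X_v)$) are closed subvarieties of a smooth projective $X_N$. The loop group $G'(\mathbb{F})$, and in particular the unipotent radical $\mathcal{U}^-$ of the opposite Iwahori, acts on $\textrm{Gr}_{G'}$ with $X^w$ as one of its orbit closures. Translating $\iota(X_v)$ by a generic element of a sufficiently large subgroup of $\mathcal{U}^-$, Kleiman--Bertini in the $G$-variety $X_N$ (applied to the subgroup acting transitively on the open Schubert cell $\Omega^w$, after passing to a smooth resolution of $X^w$ if needed) guarantees a proper, generically transverse intersection. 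Because the translated cycle is rationally equivalent to $\iota(X_v)$, the intersection number is unchanged, and it equals the number of transverse intersection points---a nonnegative integer.

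\textbf{Main obstacle and contingencies.} The principal difficulty is that $\textrm{Gr}_{G'}$ is an infinite-dimensional ind-variety, so one must verify that all constructions (opposite Schubert varieties as cohomology representatives, Kleiman translation, and the resulting intersection number) descend from, or are legitimately performed inside, a single finite-dimensional smooth projective ambient scheme $X_N$ containing $\iota(X_v)$ and meeting $X^w$ in the required open piece. This is a standard but delicate setup requiring one to truncate the loop group to a finite-type subgroup that still acts transitively on the Schubert cell $\Omega^w\cap X_N$; granting this, the positivity conclusion follows from the classical Kleiman argument. An alternative, avoiding explicit transversality, is to appeal to Graham--Kumar positivity for equivariant Schubert calculus on the Kac--Moody thick flag variety and specialize equivariant parameters, but the transversality route above is more direct for this statement.
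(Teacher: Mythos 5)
The paper does not prove this statement itself; it is quoted from \cite{lam09} and used as a black box, so there is no internal proof to match. Evaluating your attempt on its own terms: Steps 1 and 2 are correct and standard (closed embedding, irreducible image, expansion via the cap pairing against opposite Schubert classes). The gap is in Step 3. Kleiman--Bertini transversality requires a connected algebraic group acting \emph{transitively} on a smooth ambient variety; a finite-dimensional Schubert variety $X_N\subset\textrm{Gr}_{G'}$ is in general neither smooth nor homogeneous, and no finite-dimensional subgroup of the negative Iwahori unipotent radical acts transitively on $X_N$ --- it can act transitively only on individual cells such as $\Omega^w$. So the ``generic translate'' move does not inherit properness and transversality from Kleiman as stated, and the proposed fix (a subgroup acting transitively on the cell $\Omega^w\cap X_N$) does not repair this, since transitivity on one cell is far weaker than what Kleiman's theorem needs on the ambient space.

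What actually closes the argument in \cite{lam09} is precisely the fallback you mention at the end: one invokes a positivity theorem for Kac--Moody (ind-)flag varieties --- Kumar--Nori / Graham--Kumar positivity --- which asserts that the class of any irreducible closed subvariety is a nonnegative integer combination of Schubert classes. The proof of that result does not go through generic translation; it uses a degeneration by a one-parameter subgroup (or equivariant positivity with specialization of parameters), thereby sidestepping the non-homogeneity problem entirely. So your ``alternative'' route is in fact the essential one; the Kleiman route, as you have written it, has a genuine hole at the transitivity hypothesis.
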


Letting $G' = SL(\infty, \mathbb{C})$, we have $H_*(\textrm{Gr}_{G'}) \cong \Lambda$, and the Schubert basis is identified with Schur functions.  

For $G = SL(n,\mathbb{C})$ or $G = Sp(2n, \mathbb{C})$, the maps $H_*(\Gr) \to H_*(\textrm{Gr}_{SL(m, \mathbb{C})}) \to H_*(\textrm{Gr}_{SL(\infty,\mathbb{C})})$ are inclusions, and it can be shown that every Schubert class is a Schur-positive symmetric function (similarly, the inclusions $H_*(\textrm{Gr}_{Sp(2n, \mathbb{C})}) \to H_*(\textrm{Gr}_{Sp(2m, \mathbb{C})})$ give Schur $P$-positivity of type $C$ homology Schubert polynomials).  Lam's arguments can be easily adapted to show that for $G = \Spin(2n+1,\mathbb{C})$, homology Schubert classes can be identified with Schur-positive and Schur $P$-positive symmetric functions.  By considering the natural inclusions $SO(n) \hookrightarrow SU(n)$ and $SO(2n+1) \hookrightarrow SO(2n+3)$ and proofs analogous to those in \cite{lam09}, we have the following.

\begin{proposition}
\label{prop: type B topological inclusions}
The induced maps on homology 
\begin{itemize}
\item[(1)] $H_*(\Omega_0 SO(2n+1)) \to H_*(\Omega_0 SO(2n+3))$ and
\item[(2)] $H_*(\Omega_0 SO(2n+1)) \to H_*(\Omega SU(2n+1))$
\end{itemize}
are Hopf-inclusions.  Furthermore, (1) is a $\mathbb{Z}$-module isomorphism in degrees less than $4n-1$.
\end{proposition}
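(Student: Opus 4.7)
The plan is to adapt the arguments of \cite{lam09} for the analogous type $C$ statements to the type $B$ setting. Both of the Lie-group inclusions $SO(2n+1) \hookrightarrow SO(2n+3)$ and $SO(2n+1) \hookrightarrow SU(2n+1)$ are continuous group homomorphisms, so after passing to based loop spaces they induce morphisms of topological H-spaces. Consequently, the induced maps on integral homology are homomorphisms of Hopf algebras (coproduct arising from the H-space diagonal, Pontryagin product from loop multiplication).

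For part (1), I would begin by observing that the Dynkin diagram inclusion $B_n \hookrightarrow B_{n+1}$ induces a compatible embedding of affine Weyl groups $\WB \hookrightarrow \widetilde{B}_{n+1}$ via $s_i \mapsto s_i$, which matches the natural inclusion of affine Grassmannians. A brief case analysis of \eqref{eq: type B rho} shows that for $i = 1, \ldots, 2n-1$, each Grassmannian element $\rho_i \in \WB$ is sent to the corresponding Grassmannian element in $\widetilde{B}_{n+1}$, so that Theorem \ref{theorem: type B cohomology isomorphism} identifies the topological map with a homomorphism between the symmetric function models $\homB \to \Gamma^{B_{n+1}}_{(n+1)}$. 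Factors of $2$ arising from the different distributions of generators (e.g.\ $2Q_n \in \homB$ versus $Q_n \in \Gamma^{B_{n+1}}_{(n+1)}$) are absorbed into the respective identifications $\xi_{\rho_i} \leftrightarrow 2^{\chi(i \geq n)} Q_i$ on each side, so that the topological map sends generator to generator. Since any new generator of $\Gamma^{B_{n+1}}_{(n+1)}$, i.e.\ one coming from a Pieri element $\rho_i$ with $i \in \{2n, 2n+1\}$, first appears in degree $4n$, the induced map is a $\mathbb{Z}$-module isomorphism in degrees strictly below $4n$, hence in degrees less than $4n-1$.

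For part (2), I use Bott's identification $H_*(\Omega SU(2n+1)) \cong \mathbb{Z}[h_1,\ldots,h_{2n}]$ with $h_i$ in degree $2i$ and compute the images of the Pieri generators $\specialgenB{i}$ for $i = 1,\ldots,2n-1$. The Hopf-algebra structure on each side was already supplied by the H-space argument; what remains is injectivity. Following \cite{lam09}, I would show that under the isomorphism of Theorem \ref{theorem: type B cohomology isomorphism} and Bott's identification on the target, the topological map corresponds to the natural inclusion of symmetric function algebras $\homB \hookrightarrow \mathbb{Z}[h_1,\ldots,h_{2n}]$ obtained by viewing Schur $Q$-functions as symmetric functions in the usual way via $\theta(h_i) = Q_i$. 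Injectivity then follows from the algebraic independence of $Q_1,\ldots,Q_{2n-1}$ inside $\Lambda$.

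The main obstacle I expect is verifying the precise topological identification of the Schubert generators under each inclusion. One must give a naturality argument matching the geometric inclusion of affine Grassmannians $\textrm{Gr}_{B_n} \hookrightarrow \textrm{Gr}_{B_{n+1}}$ (resp.\ $\textrm{Gr}_{B_n} \hookrightarrow \textrm{Gr}_{A_{2n}}$) with Peterson's realization $j_0$ of Theorem \ref{theorem: j_0 is isomorphism}, and carefully track the factors of $2$ in the identifications with $\homB$, $\Gamma^{B_{n+1}}_{(n+1)}$, and $\mathbb{Z}[h_1,\ldots,h_{2n}]$. These compatibilities are the content alluded to by the phrase ``proofs analogous to those in \cite{lam09}'' and account for the bulk of the work; once they are in place, the two statements follow immediately from the outline above.
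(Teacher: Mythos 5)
Your high-level plan — the H-space argument for the Hopf-algebra compatibility followed by an adaptation of \cite{lam09} — coincides with what the paper asserts, since the paper gives no independent proof and simply refers to ``proofs analogous to those in \cite{lam09}.'' The difficulty is in the detail you supply for part (1), where the sketch has a genuine gap.

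You claim that the Dynkin-diagram inclusion $\WB \hookrightarrow \widetilde{B}_{n+1}$ given by $s_i \mapsto s_i$ carries each $\rho_i \in \WB$ to the corresponding $\rho_i \in \widetilde{B}_{n+1}$, and that this models the geometric inclusion of affine Grassmannians. Neither assertion holds. Comparing \eqref{eq: type B rho} at ranks $n$ and $n+1$, for $n+1 < i \leq 2n-1$ the element $\rho_i \in \WB$ contains $s_n$ and nothing above, while $\rho_i \in \widetilde{B}_{n+1}$ contains $s_{n+1}$; they are not identified under $s_j \mapsto s_j$. More fundamentally, the geometric inclusion $\textrm{Gr}_{\Spin(2n+1)} \hookrightarrow \textrm{Gr}_{\Spin(2n+3)}$ does not send Schubert cells to Schubert cells along an inclusion of affine Weyl groups. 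The content of Theorem \ref{theorem: lam09, theorem 1} is precisely that the pushforward of a Schubert class is an a priori unknown \emph{nonnegative} combination of Schubert classes, and computing those coefficients (via Peterson's $j$-map and the nilHecke machinery) is the substance of \cite{lam09}. Your sketch presupposes the answer at exactly this step.

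The same gap shows up in your treatment of the factor of $2$. If one had $\iota_*(\xi_{\rho_n}^{(B_n)}) = \xi_{\rho_n}^{(B_{n+1})}$ as your ``generator to generator'' phrasing suggests, then under $2Q_n \leftrightarrow \xi_{\rho_n}$ in $\homB$ and $Q_n \leftrightarrow \xi_{\rho_n}$ in $\Gamma_{(n+1)}^{B}$ the induced symmetric-function map would send $2Q_n$ to $Q_n$, which is not the natural inclusion of $\mathbb{Z}$-subrings of $\Gamma_*$ that the paper asserts is induced; if instead $\iota_*(\xi_{\rho_n}^{(B_n)}) = 2\xi_{\rho_n}^{(B_{n+1})}$, the natural inclusion is recovered but the map acquires a cokernel in homological degree $2n$. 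Your degree bound argument, which only notes that new polynomial generators appear in degree $4n$, is necessary but not sufficient: it does not preclude a failure of surjectivity in degree $2n$ coming from the $2Q_n$/$Q_n$ mismatch. Resolving this — and thus establishing the precise range in which (1) is a $\mathbb{Z}$-module isomorphism — requires carrying out the actual pushforward computation from \cite{lam09}, which is the missing core of the argument.
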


\begin{proposition}
\label{prop: type B k-schur are schur P positive}
The symmetric functions $\kSB{w}$ expand positively in terms of the $\{ \tilde{G}_v^{B_{n+1}}\}$ basis.
\end{proposition}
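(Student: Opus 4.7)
The plan is to deduce this positivity statement directly from the geometric positivity theorem of \cite{lam09} (Theorem \ref{theorem: lam09, theorem 1}), using the Hopf-algebra isomorphism established in Theorem \ref{theorem: type B cohomology isomorphism} to translate from geometry to symmetric functions. The key point is that $\kSB{w}$ has already been defined as the basis of $\homB$ dual to $\{\ASSFB{w}\}$, which under $\PhiB$ corresponds exactly to the Schubert basis $\{\xi_w\}$ of $H_*(\GrB)$.

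First, I would apply Proposition \ref{prop: type B topological inclusions}(1), which asserts that the map $\iota_*: H_*(\Omega_0 \Spin(2n+1)) \to H_*(\Omega_0 \Spin(2n+3))$ is a Hopf inclusion of affine Grassmannian homologies. Next, invoking Theorem \ref{theorem: lam09, theorem 1} in the form relevant to these embeddings, the pushforward of any Schubert class $\xi_w \in H_*(\GrB)$ (with underlying group $\Spin(2n+1)$) is a nonnegative integer combination of Schubert classes $\xi_v$ in $H_*(\textrm{Gr}_{\Spin(2n+3)})$, indexed by $v \in \widetilde{B}_{n+1}^0$.

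The final step is to translate this identity across the isomorphism $\PhiB$ of Theorem \ref{theorem: type B cohomology isomorphism}. Under $\PhiB$, the basis $\{\kSB{w}\}$ of $\homB$ corresponds precisely to the Schubert basis $\{\xi_w\}$, and an analogous isomorphism applies at level $n+1$, sending $\{\tilde{G}^{B_{n+1}}_v\}$ to the Schubert basis of $H_*(\textrm{Gr}_{\Spin(2n+3)})$. Crucially, as remarked after Theorem \ref{theorem: type B cohomology isomorphism}, these isomorphisms intertwine the natural inclusion of symmetric function algebras $\homB[n] \hookrightarrow \homB[n+1]$ with the topological inclusion $\iota_*$. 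Therefore the nonnegativity of the structure constants of $\iota_*(\xi_w)$ in the Schubert basis transfers verbatim to nonnegativity of the coefficients of $\kSB{w}$ when expanded in the $\{\tilde{G}^{B_{n+1}}_v\}$ basis.

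The main obstacle I anticipate is not logical but bookkeeping: one must verify carefully that the identification of symmetric function embeddings with topological pushforwards (asserted after the statement of Theorem \ref{theorem: type B cohomology isomorphism}, via Propositions \ref{prop: type B k-schur are schur P positive} and \ref{prop: type B k-schur are k-schur positive}) is indeed available for use here without circularity. Since \ref{prop: type B k-schur are schur P positive} is the very proposition being proved, the appeal must instead be to the bijection at the level of the natural embeddings of $\homB$ established through Proposition \ref{prop: type B topological inclusions}, combined with the explicit description $\PhiB(2^{\chi(i \geq n)} Q_i) = \xi_{\rho_i}$ and the fact that both embeddings are Hopf morphisms determined by their values on the generators $Q_i$. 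Once this compatibility is confirmed on generators, the positivity statement is immediate from Lam's theorem.
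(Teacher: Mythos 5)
The paper gives no explicit proof of this proposition; it is presented as a consequence of Theorem~\ref{theorem: lam09, theorem 1} and Proposition~\ref{prop: type B topological inclusions}, together with the Hopf isomorphism of Theorem~\ref{theorem: type B cohomology isomorphism}, and your proposal correctly reconstructs exactly that route. Your observation about circularity is genuine and worth flagging: the remark following Theorem~\ref{theorem: type B cohomology isomorphism} attributes compatibility of the topological pushforward with the natural inclusion of symmetric function algebras to the very proposition under consideration, so one cannot simply quote that remark. Your proposed repair --- verify the identification on the Hopf generators $2^{\chi(i \geq n)} Q_i \leftrightarrow \xi_{\rho_i}$, using that both the pushforward and the inclusion of symmetric function subalgebras are Hopf morphisms between algebras with one-dimensional primitive spaces in each relevant degree, together with the low-degree $\mathbb{Z}$-module isomorphism from Proposition~\ref{prop: type B topological inclusions}(1) --- is the natural way to close this gap, which the paper itself leaves implicit. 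One bookkeeping detail deserves more care than either you or the paper give it: the threshold in $2^{\chi(i \geq n)}$ shifts when passing from rank $n$ to rank $n+1$, so at $i = n$ the natural inclusion carries the generator $2Q_n$ of $\homB$ to twice the generator $Q_n$ of the rank-$(n+1)$ algebra; the argument should confirm that $\iota_*(\xi_{\rho_n})$ is indeed that multiple of the rank-$(n+1)$ Schubert class (or otherwise account for the factor of $2$), since Lam's theorem only guarantees nonnegativity, not the precise normalization needed to conclude the two embeddings coincide.
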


\begin{proposition}
Given $w \in \Bgrass$, if $n > \ell(w)$, then $\ASSFB{w}$ is a Schur $P$-function.  By duality, the type $B$ $k$-Schur functions $\kSB{w}$ are Schur $Q$-functions.
\end{proposition}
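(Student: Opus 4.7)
The plan is to show that under the hypothesis $\ell(w) < n$, the affine Stanley symmetric function $\ASSFB{w}$ reduces to the (non-affine) type $B$ Stanley symmetric function of Billey--Haiman, which they proved equals a Schur $P$-function for Grassmannian $w$.

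First I would observe that in any length-additive factorization $w = v^1 v^2 \cdots v^s$ into Pieri factors appearing in Definition \ref{def: affine Stanley symmetric functions}, each factor satisfies $\ell(v^i) \le \ell(w) < n$. From Definition \ref{def: type B pieri factors}, a Pieri factor of length less than $n$ cannot be (or contain) any of the length-maximal generators, so $\Supp(v^i)$ must miss the index $n$ and the complement of $\Supp(v^i)$ always contains a nonempty ``upper'' interval $[k+1,n]$ for some $k < n$. In particular every such $v^i$ can be realized inside a proper finite parabolic subgroup, and $\stat(v^i)$ admits a purely finite-type interpretation as the number of components of the complement, one of which is always $[k+1,n]$.

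Next I would match, term by term, the sum defining $\ASSFB{w}$ with the combinatorial formula for the Billey--Haiman type $B$ Stanley symmetric function $F_w^B$; because every Pieri factor appearing is short, the statistic $\stat(v^i)-1$ precisely equals the exponent of $2$ appearing in Billey--Haiman's formula on the compatible sequences coming from this factorization. This yields $\ASSFB{w} = F_w^B$, whereupon Billey--Haiman's theorem (reproved combinatorially by T.K.~Lam via Kra\'skiewicz insertion as cited in the introduction) gives $F_w^B = P_{\lambda(w)}$, where $\lambda(w)$ is the strict partition associated to $w \in \Bgrass$ under the Billey--Mitchell bijection $\Bgrass \leftrightarrow \Bpar$ (the hypothesis $\ell(w) < n$ guarantees that $\lambda(w)$ is a strict partition with all parts less than $n$, hence in $\mathcal{SP}$).

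For the duality statement, I would apply Theorem \ref{theorem: type B cohomology isomorphism}: since $\Psi(\xi^w) = \ASSFB{w} = P_{\lambda(w)}$, the dual element $\kSB{w} \in \homB$ must equal the basis element dual to $P_{\lambda(w)}$ under the Hall--Littlewood pairing, which by the orthogonality $[Q_\mu, P_\lambda] = \delta_{\lambda\mu}$ on strict partitions is $Q_{\lambda(w)}$. The main obstacle is the combinatorial identification $\ASSFB{w} = F_w^B$, matching $\stat(v)-1$ against Billey--Haiman's power of $2$ factor by factor; since we remain below the wrap-around length $n$, however, the affine subtleties disappear and the identification reduces to a straightforward case check from the explicit word descriptions in Definition \ref{def: type B pieri factors}.
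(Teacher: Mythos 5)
There are two genuine problems with your argument.

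First, the finite-type model is misidentified. You claim $\ASSFB{w}$ reduces to the Billey--Haiman \emph{type $B$} Stanley symmetric function, but removing node $n$ from the affine $B_n$ Dynkin diagram leaves a finite \emph{type $D_n$} diagram (the fork is at the $0,1$ end), so the parabolic subgroup of $\WB$ generated by $s_0,\ldots,s_{n-1}$ is the finite Weyl group of type $D_n$, in which $s_0$ and $s_1$ commute. The correct comparison, and the one the paper actually makes, is with T.K.~Lam's type $D$ Stanley symmetric functions $H_w$ (equivalently Billey--Haiman's type $D$ functions from \cite{bh94}), defined via $D(x)=(1+xu_{n-1})\cdots(1+xu_2)(1+xu_1)(1+xu_0)(1+xu_2)\cdots(1+xu_{n-1})$ in the type $D$ nilCoxeter algebra where $u_0u_1=u_1u_0$. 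Billey--Haiman's type $B$ functions live in the hyperoctahedral nilCoxeter algebra where $u_0$ and $u_1$ braid, and the power-of-two bookkeeping is different. Even after replacing type $B$ by type $D$, you still need to verify the claimed term-by-term match of $\stat(v^i)-1$ with the exponent of $2$ appearing in $H_w$; the paper does this by decomposing each Pieri factor as a ``$V$''-subword of $s_n s_{n-1}\cdots s_2 s_1 s_0 s_2 \cdots s_{n-1} s_n$ and counting components, showing that a Pieri factor with $c$ components arises $2^{c-1}$ ways in the expansion of $D(x)$; your ``straightforward case check'' conceals precisely this work.

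Second, your opening step does not hold as stated: from $\ell(v^i)<n$ you infer that $\Supp(v^i)$ misses the index $n$, but this is false. The element $s_n$ is a Pieri factor of length $1<n$ (it is a Bruhat subword of $s_0 s_2\cdots s_n\cdots s_2 s_0$), and $\supp(s_n)=\{n\}$. What one must prove, and what the paper asserts, is that $n\notin\supp(w)$ for the whole element $w$, and this uses the Grassmannian hypothesis rather than just the length bound: since $w\in\Bgrass$ with $\ell(w)<n$, its Billey--Mitchell segment decomposition involves only segments of length $<n$, each of which avoids $s_n$, whence $n\notin\supp(w)$ and only then does every Pieri factor appearing in a length-additive factorization of $w$ avoid $s_n$. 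On the other hand, your handling of the duality statement via Theorem~\ref{theorem: type B cohomology isomorphism} and the orthogonality $[Q_\mu,P_\lambda]=\delta_{\lambda\mu}$ is fine, and your observation that $\ell(w)<n$ forces $\lambda(w)$ to be a strict partition usefully makes explicit why $\ASSFB{w}$ is a \emph{single} Schur $P$-function for Grassmannian $w$, a point the paper's proof leaves implicit after citing only Schur $P$-positivity of $H_w$.
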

\begin{proof}
If $n > \ell(w)$, then $n \notin \supp(w)$.  Given a component $c$ of $w$ (that is, a subword with connected support), $c$ can be written uniquely as a subword of $s_{n}s_{n-1} \cdots s_2 s_1 s_0 s_2 \cdots s_{n-1} s_n$; we say such a subword is a ``$V$.''  Furthermore, all distinct components commute.  Therefore, write $w = c_1 c_2 \cdots c_k$, where $c_i$ is the $i$th component of $w$ (suppose they are in order with respect to $\succ$, so that $c_1$ is the component with the largest indices).  Then each $c_i$ has a reduced word that is a $V$ given by $c_i^1 m_i c_i^2$, where $m_i$ is the minimum element in $c_i$.  One way to write $w$ as a $V$ is 
$$
w = c_1^1m_1 c_2^1 m_2 \cdots c_{k-1}^1 m_{k-1} c_k c_{k-1}^2 \cdots c_2^2 c_1^2.
$$
However, for each $1 \leq i \leq k-1$, we can move $m_i$ from immediately prior to $c_{i+1}^1$ to immediately after $c_{i+1}^2$ and still have a $V$.  
Therefore, there are $2^{c-1}$ possible ways to write $w$ as a $V$, where $c$ is the number of components of $w$.  Note that $c = cc(w)$ unless neither $0$ nor $1$ is in the support of $w$, in which case $c = cc(w) - 1$. 

In \cite{tklam95}, T.K. Lam defines type $D$ Stanley symmetric functions (see also \cite{bh94}) $H_w(x)$ by
\begin{equation}
D(x_1)D(x_2) \cdots = \sum_{w} H_w(x) w,
\end{equation}
where $D(x) = (1+xu_{n-1})\cdots (1+xu_2)(1+xu_1)(1+xu_0)(1+xu_2) \cdots (1+xu_{n-1})$, and the $u_i$ are generators for the finite nilCoxeter algebra of type $D$ such that $0$ and $1$ commute, with the action of $u_i$ on permutations as defined in the introduction.  Recalling our definition of type $B$ affine Stanley symmetric functions (Definition \ref{def: affine Stanley symmetric functions}), we see that $\ASSFB{w}(x) = H_w(x)$, since the coefficient of a given $v^i$ in $D(x)$ is $cc(v^i)$.  By \cite[Theorem 4.35]{tklam95}, $H_w(x)$ expands as a nonnegative sum of Schur $P$-functions.
\end{proof}

\begin{corollary}
The type $B$ $k$-Schur functions $\kSB{w}$ expand positively in terms of Schur $Q$-functions.
\end{corollary}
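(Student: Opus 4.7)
The plan is to combine the two preceding propositions via an iterative stabilization argument. By Proposition~\ref{prop: type B k-schur are schur P positive}, the function $\kSB{w}$ expands as a nonnegative integer combination of the rank $n+1$ type $B$ $k$-Schur functions $\tilde{G}^{B_{n+1}}_v$, and this positivity is inherited from the Hopf-inclusion $H_*(\Omega_0 \Spin(2n+1)) \hookrightarrow H_*(\Omega_0 \Spin(2n+3))$ of Proposition~\ref{prop: type B topological inclusions}. Since pushforward in homology is degree-preserving, every $v$ appearing on the right satisfies $\ell(v) = \ell(w)$.

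Iterating this embedding, for any $N \geq n$ I obtain an expansion
\begin{equation*}
\kSB{w} = \sum_{u} a_u^{(N)} \, \tilde{G}^{B_N}_u
\end{equation*}
with $a_u^{(N)} \in \mathbb{Z}_{\geq 0}$, indexed by $0$-Grassmannian elements $u$ of the affine Weyl group $\widetilde{B}_N$ with $\ell(u) = \ell(w)$. Next, choose $N$ so that $N > \ell(w)$. Then the hypothesis $N > \ell(u)$ holds for every $u$ appearing, so by the immediately preceding proposition (which identifies affine Stanley functions of short length with Schur $P$-functions, and dually $k$-Schur functions with Schur $Q$-functions), each $\tilde{G}^{B_N}_u$ is itself a single Schur $Q$-function. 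Substituting produces the desired expansion of $\kSB{w}$ as a nonnegative integer linear combination of Schur $Q$-functions.

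The only point requiring care, rather than a genuine obstacle, is coherence across ranks: that the expansions in successive stable homology rings assemble compatibly and that all manipulations take place inside the single ambient ring $\Gamma_*$. This is guaranteed by Proposition~\ref{prop: type B topological inclusions} together with the fact that $\Gamma_*$ contains all the $k$-Schur and Schur $Q$-functions in question, so the stabilized expansion is unambiguously the unique Schur $Q$-expansion of $\kSB{w}$, and its nonnegativity is obtained by composition of the nonnegativity statements at each stage of the iteration.
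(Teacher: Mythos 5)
Your proposal is correct and follows exactly the argument the paper intends: the corollary is stated without an explicit proof, but the intended deduction is to iterate the stable inclusion of Proposition~\ref{prop: type B k-schur are schur P positive} (justified by Proposition~\ref{prop: type B topological inclusions}) until the rank $N$ exceeds $\ell(w)$, at which point every $\tilde{G}^{B_N}_u$ appearing (all with $\ell(u)=\ell(w)$ by degree-preservation) is a single Schur $Q$-function by the immediately preceding proposition. You have correctly identified both the role of the degree argument and the stabilization inside $\Gamma_*$, so nothing is missing.
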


\begin{proposition}
\label{prop: type B k-schur are k-schur positive}
The type $B$ $k$-Schur functions $\kSB{w}$ are $k$-Schur positive, with $k = 2n$.
\end{proposition}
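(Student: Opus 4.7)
The plan is to deduce $k$-Schur positivity from geometric positivity of Schubert class pushforwards, following the program laid out by Lam in \cite{lam09}. The key observation is that under the isomorphism $\PhiB: \homB \to H_*(\GrB)$ of Theorem \ref{theorem: type B cohomology isomorphism}, the type $B$ $k$-Schur function $\kSB{w}$ corresponds to the homology Schubert class $\xi_w$, and we have an explicit target for the positivity statement: the $k$-Schur functions of Lapointe--Lascoux--Morse (at $t=1$) represent the Schubert basis of $H_*(\textrm{Gr}_{SL(2n+1,\mathbb{C})}) \cong \mathbb{Z}[h_1,\ldots,h_{2n}]$ by \cite{lam08}.

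First, I would invoke the natural Lie group inclusion $\iota: \Spin(2n+1, \mathbb{C}) \hookrightarrow SL(2n+1,\mathbb{C})$, which induces a Hopf inclusion on homology of based loop spaces
\[
\iota_*: H_*(\Omega_0 \Spin(2n+1)) \hookrightarrow H_*(\Omega SU(2n+1))
\]
by part (2) of Proposition \ref{prop: type B topological inclusions}. By Theorem \ref{theorem: lam09, theorem 1}, for each $w \in \Bgrass$ the pushforward $\iota_*(\xi_w)$ expands as a nonnegative integer linear combination of Schubert classes $\xi_u$ of $H_*(\textrm{Gr}_{SL(2n+1,\mathbb{C})})$. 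Under the identification $H_*(\textrm{Gr}_{SL(2n+1,\mathbb{C})}) \cong \mathbb{Z}[h_1,\ldots,h_{2n}]$ these Schubert classes are precisely the $k$-Schur functions $s_\lambda^{(2n)}$ with $\lambda$ a $(2n)$-bounded partition.

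Next, I would translate this geometric positivity into a statement about symmetric functions. The paragraph following Theorem \ref{theorem: type B cohomology isomorphism} asserts that the embedding of symmetric function algebras $\homB \hookrightarrow \mathbb{Z}[h_1,\ldots,h_{2n}]$ induced by $\PhiB$ and the identification $H_*(\Omega SU(2n+1)) \cong \mathbb{Z}[h_1,\ldots,h_{2n}]$ coincides with the natural inclusion of symmetric subalgebras. Therefore, viewing $\kSB{w}$ as an element of $\mathbb{Z}[h_1,\ldots,h_{2n}]$ via this natural inclusion gives exactly the image $\iota_*(\xi_w)$. Combining this with the previous paragraph yields
\[
\kSB{w} = \sum_{\lambda} c_{w,\lambda}\, s_\lambda^{(2n)}, \qquad c_{w,\lambda} \in \mathbb{Z}_{\geq 0},
\]
which is the desired $k$-Schur positivity.

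The main obstacle in this argument is not the positivity of Schubert pushforwards itself (which is handed to us by Theorem \ref{theorem: lam09, theorem 1}) but rather the careful identification of the topological pushforward $\iota_*$ with the algebraic inclusion of symmetric function rings. In the proof I would write, I would explicitly cite the matching asserted after Theorem \ref{theorem: type B cohomology isomorphism} (which can itself be verified by checking that the generators $\PhiB(2^{\chi(i\geq n)}Q_i) = \xi_{\rho_i}$ push forward to the corresponding generators $h_i$ on the $SL$ side, using the behavior of $Q_i$ under $\theta$). Everything else is a direct combination of the identification of $\kSB{w}$ with $\xi_w$, Lam's theorem, and the known identification of type $A$ homology Schubert classes with $k$-Schur functions.
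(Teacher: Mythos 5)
Your proposal reconstructs the argument the paper leaves implicit: the paper gives no written proof of this proposition, instead relying on the preceding discussion (the sentence ``By considering the natural inclusions $SO(n) \hookrightarrow SU(n)$ \ldots and proofs analogous to those in \cite{lam09}, we have the following'') together with Proposition \ref{prop: type B topological inclusions}(2) and Theorem \ref{theorem: lam09, theorem 1}. You have correctly identified this chain of reasoning and filled in the same steps: the Hopf inclusion $\iota_*: H_*(\Omega_0 SO(2n+1)) \hookrightarrow H_*(\Omega SU(2n+1))$, geometric positivity of Schubert pushforwards, and the identification of the type $A$ Schubert basis of $H_*(\textrm{Gr}_{SL(2n+1,\mathbb{C})})$ with the $2n$-bounded $k$-Schur functions. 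You are also right to flag that the matching of the topological pushforward with the algebraic inclusion $\homB \hookrightarrow \mathbb{Z}[h_1,\ldots,h_{2n}]$ is the nontrivial step and should be checked on the generators $\xi_{\rho_i} = \PhiB(2^{\chi(i\geq n)}Q_i)$; the paper's wording (``we will show (Propositions \ref{prop: type B k-schur are schur P positive} and \ref{prop: type B k-schur are k-schur positive}) that the embeddings \ldots agree with the natural embeddings'') treats this generator check as part of the content of the present proposition rather than a prior fact, so your remark that it ``can itself be verified by checking generators'' is the right way to avoid circularity. Overall this matches the paper's intent.
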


\begin{proof}[Proof of Theorem \ref{theorem: ASSF form positive basis}] The proof of Theorem \ref{theorem: ASSF form positive basis} may also follow its analog in \cite{lss10}, after establishing the above results.  The affine Stanley symmetric functions are symmetric by their alternative definition using the type $B$ reproducing kernel and the commutativity of $\mathbb{B}$.  They form a basis for $w \in \Bgrass$ because the Schubert classes $\xi^w$ for $w \in \Bgrass$ form a basis of $H^*(\GrB)$.  Graham \cite{gra01} and Kumar \cite{kum02} showed the positivity of the structure constants.  By duality, the coproduct structure constants of $\{ \ASSFB{w} \mid w \in \Bgrass \}$ equal the product structure constants of $\{\xi_w \mid w \in \Bgrass \}$, which are nonnegative by work of Peterson \cite{pet97} and Lam and Shimozono \cite{ls10} (see \cite{lss10}).  
\end{proof}

\begin{proof}[Proof of Theorem \ref{theorem: type B pieri rule}]
Follows from Theorem \ref{theorem: homology pieri rule} and Proposition \ref{prop: type B special generators formula}.
\end{proof}

\subsection{Main Theorems -- Type $D$}
The type $D$ results are not as satisfying or complete, given the constraints discussed in the introduction.

\begin{proposition}
\label{prop: type D ASSF span dual space}
$\cohomD$ is spanned by $\{ \ASSFD{w} \mid w \in \WD^0 \}$.  The functions $\{\ASSFD{w} \mid w \in \WD^0 \}$ are linearly independent, except for $\ASSFD{w} = \ASSFD{w'}$ if there are reduced words for $w$ and $w'$ that differ only by swapping some occurrences of $n$ and $n-1$.
\end{proposition}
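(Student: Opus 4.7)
The plan is to mirror the type $B$ proofs in Proposition \ref{type B ASSF are linearly independent} and Proposition \ref{type B ASSF span dual space}, with an extra layer accounting for the two colors of length $n-1$ segments and the resulting identifications among affine Stanley symmetric functions. First I would establish a type $D$ analog of Lemma \ref{type B maximal factorization}: every maximal length-decreasing factorization of $w \in \Dgrass$ into elements of $\ZD$ is a factorization into segments. The argument is the same as in type $B$, using that a non-segment Pieri factor must contain an adjacent pair from $\{s_0, s_1\}$ or from $\{s_{n-1}, s_n\}$, combined with Billey--Mitchell's classification of Grassmannian segments. This assigns to each $w \in \Dgrass$ a colored partition in $\Dpar$; forgetting the color of length $n-1$ parts produces an element $\lambda(w) \in \Dparonecolor$.

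Next I would verify that $\ASSFD{w} = \ASSFD{w'}$ whenever $w$ and $w'$ differ by an $n \leftrightarrow n-1$ swap on some occurrences in their reduced words. The Dynkin diagram automorphism exchanging $n$ and $n-1$ preserves $\ZD$ (by the symmetry built into Definition \ref{def: type D pieri factors}), preserves length, and preserves $\stat$, because $n-1$ and $n$ are incomparable in the $\prec$ order on $\Iaf$, so the interval structure (and hence $\Supp$ and its complement) is unchanged under the swap. Applying this involution factor-by-factor to each length-additive factorization in Definition \ref{def: affine Stanley symmetric functions} gives a bijection between the terms of $\ASSFD{w}$ and those of $\ASSFD{w'}$.

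Linear independence of the resulting equivalence class representatives, indexed by $\Dparonecolor$, then proceeds as in Proposition \ref{type B ASSF are linearly independent}: expand $\ASSFD{w} = \sum_{\mu \leq \lambda(w)} a_{\mu, \lambda(w)} m_\mu$ with respect to lexicographic order on partitions, with a nonzero leading term at $\mu = \lambda(w)$ coming from the unique (up to color swap) factorization into segments; distinct $\lambda(w) \in \Dparonecolor$ then yield a triangular coefficient matrix. For spanning, given $\lambda \in \Dparonecolor$, set $g' = 2^{p_{\geq n}(\lambda)} h_\lambda$ so $\theta(g') = q'_\lambda$. Using Lemma \ref{hall inner product versus hall-littlewood inner product} together with the reproducing kernel $\repkerD$, for any representative $w$ with $\lambda(w) = \lambda$ one has $[\ASSFD{w}, g'] = 1$ and $[\ASSFD{w}, g''] = 0$ for $g'' = 2^{p_{\geq n}(\psi)} q_\psi$ with $\psi >_{\mathrm{lex}} \lambda$. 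Since $(q'_\lambda)_{\lambda \in \Dparonecolor}$ is a $\mathbb{Z}$-basis of $\homD$ by Proposition \ref{prop: type D homology spanning set}, this proves $\{\ASSFD{w} \mid w \in \Dgrass\}$ spans $\cohomD = \mathrm{Hom}_\mathbb{Z}(\homD, \mathbb{Z})$.

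The main obstacle is checking the identification $\ASSFD{w} = \ASSFD{w'}$ carefully: one must verify that the $n \leftrightarrow n-1$ swap commutes with the decomposition of a reduced word for $w$ into Pieri factor blocks. Here it is important that the swap acts on Pieri factors individually (which follows from the symmetry in Definition \ref{def: type D pieri factors}) and that no spurious identifications occur between Grassmannian elements with genuinely different uncolored partitions; the latter follows from the Billey--Mitchell bijection once one observes that the swap changes only the color of length $n-1$ segments and nothing else in the factorization.
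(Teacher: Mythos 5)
Your proposal is correct and takes essentially the same approach as the paper's proof, which simply says the argument is ``similar to'' the type $B$ Propositions \ref{type B ASSF are linearly independent} and \ref{type B ASSF span dual space} modulo the colored length-$(n-1)$ segments; you flesh out exactly those details. One small point worth making explicit in your identification step: since partitions in $\Dpar$ have distinct parts of size smaller than $n$, the canonical segment decomposition of a Grassmannian $w$ contains at most one length-$(n-1)$ segment, and segments of length $\geq n$ contain $s_{n-1}s_n$ as a commuting pair and hence are fixed by the swap; this is what guarantees that ``swapping some occurrences of $n$ and $n-1$'' coincides with applying the Dynkin automorphism $\sigma$, so your $\sigma$-based bijection between factorizations really does cover every case the proposition asserts.
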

\begin{proof}
The proof of the first statement is similar to the proof of Proposition \ref{type B ASSF span dual space}.  The second statement also follows a similar proof to that of \ref{type B ASSF are linearly independent}, except if $\rho_{n-1}^{(i)}$ occurs as a segment in the canonical decomposition of $w$ into segments, in which case swapping $\rho_{n-1}^{(1)}$ and $\rho_{n-1}^{(2)}$ in $w$ will give another element $w'$ such that $\ASSFD{w} = \ASSFD{w'}$.
\end{proof}

Given the setup and results we have for type $D$, a proof of Conjecture \ref{conj: type D conjecture} will likely follow the same scheme as the type $B$ proofs.

\begin{proof}[Proof of Theorem \ref{theorem: type D pieri rule}]
Follows from Theorem \ref{theorem: homology pieri rule} and Proposition \ref{prop: type D special generators formula}.
\end{proof}

\section{Type-free Pieri factors} Given the type-specific definitions of Pieri factors, it is easy to prove our type-free description for the classical types.
\begin{proposition}
\label{prop: type-free pieri factors}
The Pieri factors given in \ref{subsection: type free results} match with those of with the corresponding set of affine Weyl group elements given in type $A$ (\cite[Definition 6.2]{lam08}), type $C$ (\cite[\textsection 1.5]{lss10}), and types $B$ and $D$ (Definitions \ref{def: type B pieri factors} and \ref{def: type D pieri factors}).
\end{proposition}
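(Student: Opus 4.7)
The plan is to verify the equivalence of the two descriptions of Pieri factors type-by-type. For each classical type, both the type-free and the type-specific definitions present $\mathcal{Z}$ as the Bruhat order ideal generated by an explicit finite set of elements of $\widetilde{W}_{af}$, so it suffices to show that the two generating sets coincide. Concretely, for each orbit element $\alpha \in \mathcal{O} = W \cdot \nu(\fincoweight{1})$, I would identify the unique $w_\alpha \in \widetilde{W}_{af}$ such that $w_\alpha \funalc = t_\alpha \funalc$, compute a reduced word for $w_\alpha$, and match this word with one of the listed generators in the corresponding type-specific definition.

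The first step is a cardinality check. In standard coordinates, the orbit $\mathcal{O}$ has $n$ elements in type $A_{n-1}$ and $2n$ elements in each of types $B_n$, $C_n$, and $D_n$ (in the latter three, $\mathcal{O} = \{\pm e_i : 1 \leq i \leq n\}$). A careful count of the generators listed in \cite[Definition 6.2]{lam08}, \cite[\textsection 1.5]{lss10}, Definition \ref{def: type B pieri factors}, and Definition \ref{def: type D pieri factors} yields the same cardinalities: in type $B$ the three families $s_0 s_2 \cdots s_2 s_0$, $s_1 s_2 \cdots s_2 s_1$, and the $2n-2$ admissible cyclic rotations of $s_2 \cdots s_n \cdots s_2 s_1 s_0$ account for exactly $2n$ elements. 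Next, standard alcove geometry gives $\ell(w_\alpha) = \sum_{\beta \in \Phi^+} |\langle \alpha, \beta \rangle|$, which is constant across $\mathcal{O}$ and evaluates to $2n-1$ in type $B$ and $2n-2$ in type $D$, matching the length of every listed type-specific generator.

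The heart of the argument is to exhibit a specific reduced word for each $w_\alpha$. For the dominant orbit element $\alpha = \nu(\fincoweight{1})$, I would trace a minimal gallery from $\funalc$ to $t_\alpha \funalc$ through the dominant chamber; this yields a preferred reduced word that directly matches one of the listed generators (for instance, $s_0 s_2 \cdots s_n \cdots s_2 s_0$ in type $B$, and analogously in each other type). For the remaining orbit elements $\alpha = w \cdot \nu(\fincoweight{1})$, conjugating the gallery by $w$ (viewed as an element of the stabilizer $\Sigma$ of $\funalc$ in $\Wafext$) produces a new minimal gallery whose sequence of wall-crossings is a cyclic rotation of the original word, together with a possible Dynkin diagram automorphism in type $D$; these are precisely the images listed in the respective type-specific definitions.

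The main obstacle is the explicit case analysis needed to identify each orbit element with the correct generator and to verify the adjacency constraints (that $s_0, s_1$ remain adjacent in type $B$, and that both $s_0, s_1$ and $s_{n-1}, s_n$ remain adjacent in type $D$). These constraints are geometrically natural: they reflect the fact that the forks of the affine Dynkin diagrams of types $B$ and $D$ are preserved under the rotational action on galleries as $\alpha$ varies over $\mathcal{O}$, and that the two length-$(n-1)$ type $D$ generators $\rho_{n-1}^{(1)}, \rho_{n-1}^{(2)}$ correspond to the two orbits of $\mathcal{O}$ under the Dynkin diagram automorphism exchanging $s_{n-1}$ and $s_n$. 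Once these matchings are verified in each type, equality of the Bruhat ideals is immediate from equality of their generating sets.
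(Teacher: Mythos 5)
Your proposal follows essentially the same strategy as the paper's proof: identify the alcove element $w_\alpha$ corresponding to $t_\alpha A_0$ for the dominant orbit element $\alpha = \nu(\fincoweight{1})$, then generate the remaining length-maximal Pieri factors as cyclic rotations (and, in type $D$, Dynkin automorphism images) of that element. The paper obtains the dominant reduced word from Pittman-Polletta's explicit tables; your suggestion to derive it instead by tracing a minimal gallery through the dominant chamber is a legitimate alternative, and your preliminary cardinality and length checks are sensible (if not strictly necessary) sanity checks.

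However, the central step of your argument has a genuine gap. You write that for a non-dominant orbit element $w\cdot\nu(\fincoweight{1})$ one can ``conjugate the gallery by $w$ (viewed as an element of the stabilizer $\Sigma$ of $A_0$ in $\Wafext$).'' But $w$ ranges over the finite Weyl group $W$, which is \emph{not} contained in $\Sigma$; a generic $w\in W$ does not fix $A_0$, so the conjugated gallery does not start at $A_0$ and does not immediately yield a reduced word for the alcove element you want. The paper handles exactly this point algebraically: writing $t_{\nu(\fincoweight{1})}=\tau v$ with $\tau\in\Sigma$ and $v\in\Waf$, one computes $wt_{\nu(\fincoweight{1})}w^{-1}=\tau\,(\tau^{-1}w\tau)\,v\,w^{-1}$ and then must verify by explicit braid-relation manipulation (the displayed calculation $s_{i-1}\,(\text{word})\,s_i = (\text{cyclic rotation})$) that the resulting element has the required cyclically-rotated reduced word. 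Your gallery-language substitute for that calculation is not justified as stated and needs to be replaced by either the paper's algebraic argument or a genuine alcove-geometric one (e.g.\ tracking the wall-crossing sequence under the affine isometry $t_{\nu(\fincoweight{1})}$).

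There is also a small factual error in your type $D$ discussion: $\rho_{n-1}^{(1)},\rho_{n-1}^{(2)}$ are Grassmannian segments of length $n-1$, not length-maximal Pieri factor generators (which have length $2n-2$), and $\mathcal{O}=W\cdot\nu(\fincoweight{1})=\{\pm e_i\}$ is a single $W$-orbit that is preserved, not split into two orbits, by the Dynkin diagram automorphism exchanging $s_{n-1}$ and $s_n$. This confusion does not affect the core of the argument but should be corrected.
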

\begin{proof}
We use computations done by Pittman-Polletta \cite{pp10}, although others may have done similar computations (\cite{fl07}).  Pittman-Polletta computes the reduced word in the affine Weyl group corresponding to the image of the translation by $\nu(\fincoweight{1})$ for every type (in his notation, this is the Weyl group element $W_1$, and his $w$ correspond to our $w^{-1}$).  We can use that description to find a description of all length-maximal Pieri factors.  
We proceed using the descriptions of Pieri factors given in \cite{lam08} and \cite{lss10} for the type $A$ and $C$ cases, respectively.

\begin{itemize}
\item[\textbf{Type $A$:}] In this case, Pieri factors are the Bruhat order ideal generated by length-maximal cyclically decreasing words \cite{lam08}.  The reduced word in the affine Weyl group corresponding to translation by $\nu(\fincoweight{1})$ is $s_0s_{n-1}s_{n-2}\cdots s_3s_2$.  It is not hard to compute $\tau$ such that $t_{\nu(\fincoweight{1})} = \tau s_0s_{n-1}\cdots s_2$ by looking at the action of $s_0 \cdots s_2$ on simple roots -- it is the Dynkin diagram automorphism that sends $i$ to $i+1$.  Then for $w \in W$, we have 
\begin{align*}w t_{\nu(\fincoweight{1})} w^{-1} &= w \tau s_0 \cdots s_2 w^{-1}\\& = \tau (\tau^{-1} w \tau) s_0 \cdots s_2 w^{-1} \\&= \tau s_{w_1 - 1} \cdots s_{w_\ell - 1} s_0 \cdots s_2 s_{w_\ell} \cdots s_{w_1},
\end{align*} where $w_1 \cdots w_\ell$ is a reduced word for $w$.  One can check that given any cyclically decreasing word $s_r s_{r-1} \cdots s_1 s_0 s_{n-1} \cdots s_{r+2}$, the result of multiplying $s_{i-1} s_r \cdots s_{r+2} s_i$ is another cyclically decreasing word.  For example, if $i \neq r+1$, then we have
\begin{align}
s_{i-1}s_r s_{r-1} \cdots s_1 s_0 s_{n-1} \cdots s_{r+2}s_i & = s_r \cdots s_{i+1} s_{i-1} s_i s_{i-1} s_i s_{i-2} \cdots s_{r+2}\\
&= s_r \cdots s_{i+1} s_i s_{i-1} \cdots s_{r+2}
\end{align}
so the cyclically decreasing word is unchanged.  If $i = r+1$, then multiplication by $s_{i-1}$ on the left and $s_i$ on the right will rotate the reduced word.  It is clear that we can get any maximal-length cyclically decreasing word in this manner; therefore, the elements $\{t_{w(\nu(\fincoweight{1}))}, w \in S_n\}$ correspond to the type $A$ Pieri factor generators given in \cite{lam08}.

\item[\textbf{Type $B$:}] In this case, Pieri factors are given in Definition \ref{def: type B pieri factors}.  For type $B$, the fundamental coweight $\fincoweight{1}$ does not lie in the coroot lattice; therefore, translation by $\nu(\fincoweight{1})$ must involve a nontrivial Dynkin diagram automorphism.  There is only one choice for such an automorphism -- the map that exchanges $0$ and $1$.  Therefore, by \cite{pp10}, $t_{\nu(\fincoweight{1})} = \tau s_0 s_2 \cdots s_n \cdots s_2 s_0$, where $\tau$ exchanges $0$ and $1$, and we have $t_{w\nu(\fincoweight{1})} = \tau w' s_0 s_2 \cdots s_n \cdots s_2 s_0 w^{-1}$, where $w'$ is obtained from $w$ by switching any occurrences of $0$ into $1$, and vice versa.  These match with the description of maximal-length Pieri factors of type $B$, via similar calculations to the type $A$ case.

\item[\textbf{Type $C$:}] In this case, maximal-length Pieri factors are given by conjugates of the affine Weyl group element with reduced word $s_1s_2 \cdots s_{n-1} s_n s_{n-1}\cdots s_2s_1s_0$ \cite{lss10}.  For type $C$, $\nu(\fincoweight{1})$ is in the span of the coroots; 
therefore, translation by $\nu(\fincoweight{1})$ lies in the affine Weyl group, and acting on $\nu(\fincoweight{1})$ by finite Weyl group elements corresponds to conjugation.

\item[\textbf{Type $D$:}] In this case, Pieri factors are given in Definition \ref{def: type D pieri factors}.  The translation $t_{\nu(\fincoweight{1})}$ corresponds to the affine Weyl group element $v$ where
$$v = s_0 s_2 \cdots s_{n-2} s_{n-1} s_n s_{n-2} \cdots s_2 s_0.$$  As in type $A$, a simple calculation shows that $t_{\nu(\fincoweight{1})} = \tau v$, where $\tau$ exchanges $0$ and $1$ and $n$ and $n-1$.  As in the cases above, this is easily seen to correspond to the description of type $D$ Pieri factors in terms of reduced words.
\end{itemize}

\end{proof}


\section{Appendix}

The following are examples of affine Stanley symmetric functions and their duals for $w \in \widetilde{B}_3^0$ (they have been implemented in the Sage open-source mathematical software package).  Affine Stanley symmetric functions are expanded in terms of monomial symmetric functions indexed by $\lambda$ with $\lambda_1 \leq 5$, since we are working in the quotient ring.  Type $B$ $k$-Schur functions are expanded in terms of Schur $Q$-functions.

\scriptsize
$$
\begin{array}{|c|c|c|}
\hline
w & \ASSFB{w} & \kSB{w}\\
\hline
s_0 &  m_1 & Q_1\\
\hline
s_2s_0 &  2m_{1,1} + m_2 & Q_2\\
\hline
s_1s_2s_0 & 2m_{1,1,1} + m_{2,1} & Q_{21}\\
\hline
s_3s_2s_0 & 2m_{1, 1, 1} + m_{2, 1} + \frac{1}{2}m_3 & 2Q_3\\
\hline
s_1s_3s_2s_0 &4m_{1, 1, 1, 1} + 2m_{2, 1, 1} + m_{2, 2} + \frac{1}{2}m_{3, 1}  & 2Q_{31}\\
\hline
s_2s_3s_2s_0 & 4m_{1, 1, 1, 1} + 2m_{2, 1, 1} + m_{2, 2} + m_{3, 1} + \frac{1}{2}m_4 & 2Q_4\\
\hline
s_2s_1s_3s_2s_0 & 8m_{1, 1, 1, 1, 1} + 4m_{2, 1, 1, 1} + 2m_{2, 2, 1} + m_{3, 1, 1} + \frac{1}{2}m_{3, 2} & 2Q_{3,2} + 2Q_{4,1}\\
\hline
s_1s_2s_3s_2s_0 & 4m_{1, 1, 1, 1, 1} + 2m_{2, 1, 1, 1} + m_{2, 2, 1} + m_{3, 1, 1} + \frac{1}{2}m_{3, 2} + \frac{1}{2}m_{4, 1} & 2Q_{4,1} + 2Q_5 \\
\hline
s_0s_2s_3s_2s_0 & 4m_{1, 1, 1, 1, 1} + 2m_{2, 1, 1, 1} + m_{2, 2, 1} + m_{3, 1, 1} + \frac{1}{2}m_{3, 2} + \frac{1}{2}m_{4, 1} + \frac{1}{2}m_{5} & 2Q_5\\
\hline
\end{array}
$$
\normalsize

\newpage

\bibliographystyle{acm}
\bibliography{assf_bibliography}

\begin{thebibliography}{10}

\bibitem{bh94}
{\sc Billey, S., and Haiman, M.}
\newblock {S}chubert polynomials for the classical groups.
\newblock {\em J. Amer. Math. Soc. 8\/} (1994), 443--482.

\bibitem{bjs93}
{\sc Billey, S., Jockusch, W., and Stanley, R.}
\newblock {S}ome combinatorial properties of {S}chubert polynomials.
\newblock {\em Journal of Algebraic Combinatorics 2\/} (1993), 345--374.

\bibitem{bm08}
{\sc Billey, S., and Mitchell, S.}
\newblock {A}ffine partitions and affine {G}rassmannians.
\newblock preprint; arXiv:0803.3647, 2008.

\bibitem{bb05}
{\sc Bj\"orner, A., and Brenti, F.}
\newblock {\em {C}ombinatorics of {C}oxeter groups}.
\newblock Springer, 2005.

\bibitem{bott58}
{\sc Bott, R.}
\newblock {T}he space of loops on a {L}ie group.
\newblock {\em Michigan Math. J. 5}, 1 (1958), 35--61.

\bibitem{car05}
{\sc Carter, R.}
\newblock {\em {L}ie algebras of finite and affine type}.
\newblock Cambridge University Press, 2005.

\bibitem{ee98}
{\sc Eriksson, H., and Eriksson, K.}
\newblock {A}ffine {W}eyl groups as infinite permutations.
\newblock {\em Electronic J. Combinatorics 5\/} (1998), 1--8.

\bibitem{fk96}
{\sc Fomin, S., and Kirillov, A.}
\newblock {C}ombinatorial {B}n-analogues of {S}chubert polynomials.
\newblock {\em Transactions of the American Mathematical Society 348\/} (1996),
  3591--3620.

\bibitem{fs94}
{\sc Fomin, S., and Stanley, R.}
\newblock {S}chubert polynomials and the nil{C}oxeter algebra.
\newblock {\em Adv. in Math. 103\/} (1994), 196--207.

\bibitem{fl07}
{\sc Fourier, G., and Littelmann, P.}
\newblock {W}eyl modules, {D}emazure modules, {K}{R}-modules, crystals, fusion
  products and limit constructions.
\newblock {\em Advances in Mathematics 211\/} (2007), 566--593.

\bibitem{gra01}
{\sc Graham, W.}
\newblock {P}ositivity in equivariant {S}chubert calculus.
\newblock {\em Duke Math. J. 109\/} (2001), 599--614.

\bibitem{hum90}
{\sc Humphreys, J.}
\newblock {\em {R}eflection groups and {C}oxeter groups}.
\newblock Cambridge University Press, 1990.

\bibitem{kac90}
{\sc Kac, V.}
\newblock {\em {I}nfinite-{D}imensional {L}ie {A}lgebras}.
\newblock Cambridge University Press, 1990.

\bibitem{kum02}
{\sc Kumar, S.}
\newblock {\em {K}ac-{M}oody groups, their flag varieties and representation
  theory}.
\newblock Birkhauser Boston, Inc., 2002.

\bibitem{tklam95}
{\sc Lam, T.}
\newblock {\em {B} and {D} analogues of stable {S}chubert polynomials and
  related insertion algorithms}.
\newblock PhD thesis, Massachusetts Institute of Technology, 1995.

\bibitem{tklam96}
{\sc Lam, T.}
\newblock {B}n {S}tanley symmetric functions.
\newblock {\em Discrete Mathematics 157\/} (1996), 241--270.

\bibitem{lam05}
{\sc Lam, T.}
\newblock {A}ffine {S}tanley symmetric functions.
\newblock to appear, 2005.

\bibitem{lam08}
{\sc Lam, T.}
\newblock {S}chubert polynomials for the affine {G}rassmannian.
\newblock {\em J. Amer. Math. Soc. 21\/} (2008), 259--281.

\bibitem{lam09}
{\sc Lam, T.}
\newblock {A}ffine {S}chubert classes, {S}chur positivity, and combinatorial
  {H}opf algebras.
\newblock preprint; arXiv:0906.0385v1, 2009.

\bibitem{lss10}
{\sc Lam, T., Schilling, A., and Shimozono, M.}
\newblock {S}chubert polynomials for the affine {G}rassmannian of the
  symplectic group.
\newblock {\em Mathematische Zeitschrift 264\/} (2010), 765--811.

\bibitem{ls07}
{\sc Lam, T., and Shimozono, M.}
\newblock {D}ual graded graphs for {K}ac-{M}oody algebras.
\newblock {\em Algebra and Number Theory 1\/} (2007), 451--488.

\bibitem{ls10}
{\sc Lam, T., and Shimzono, M.}
\newblock {Q}uantum cohomology of {G}/{P} and homology of affine
  {G}rassmannian.
\newblock to appear, 2010.

\bibitem{llm03}
{\sc Lapointe, L., Lascoux, A., and Morse, J.}
\newblock {T}ableau atoms and a new {M}acdonald positivity conjecture.
\newblock {\em Duke Math. J. 116\/} (2003), 103--146.

\bibitem{lp00}
{\sc Lascoux, A., and Pragacz, P.}
\newblock {O}rthogonal divided differences and {S}chubert polynomials,
  $\tilde{P}$-functions, and vertex operators.
\newblock {\em Michigan Math. J. 48\/} (2000), 417--441.

\bibitem{ls82-1}
{\sc Lascoux, A., and Schutzenberger, M.}
\newblock {P}olynomes de {S}chubert.
\newblock {\em C.R. Acad. Sc. Paris 294\/} (1982), 447--450.

\bibitem{mac91}
{\sc Macdonald, I.}
\newblock {\em {N}otes on {S}chubert polynomials}, vol.~6.
\newblock Publications du Laboratoire de Combinatoire et d' Informatique
  Mathematique, 1991.

\bibitem{mac95}
{\sc Macdonald, I.}
\newblock {\em {S}ymmetric functions and {H}all polynomials}, second
  edition~ed.
\newblock Oxford University Press, 1995.

\bibitem{mit88}
{\sc Mitchell, S.}
\newblock {Q}uillen's theorem on buildings and the loops on a symmetric space.
\newblock {\em L'Enseignement Mathematique 34\/} (1988), 123--166.

\bibitem{pet97}
{\sc Peterson, D.}
\newblock {Q}uantum cohomology of {G}/{P}.
\newblock Lecture notes at MIT, 1997.

\bibitem{pp10}
{\sc Pittman-Polletta, B.}
\newblock {\em {F}actorization in loop groups}.
\newblock PhD thesis, University of Arizona, 2010.

\bibitem{pon10}
{\sc Pon, S.}
\newblock {\em {A}ffine {S}tanley symmetric functions for classical groups}.
\newblock PhD thesis, University of California, Davis, 2010.

\bibitem{pr97}
{\sc Pragacz, P., and Ratajski, J.}
\newblock {F}ormulas of {L}agrangian and orthogonal degeneracy loci;
  $\tilde{Q}$-polynomial approach.
\newblock {\em Compositio Math. 107\/} (1997), 11--87.

\bibitem{ps86}
{\sc Pressley, A., and Segal, G.}
\newblock {\em {L}oop groups}.
\newblock Oxford Science Publications, 1986.

\bibitem{sagecom}
{\sc {S}age-{C}ombinat community}.
\newblock {S}age-{C}ombinat: enhancing sage as a toolbox for computer
  exploration in algebraic combinatorics.
\newblock {\tt http://combinat.sagemath.org}, 2008.

\bibitem{stan84}
{\sc Stanley, R.}
\newblock {O}n the number of reduced decompositions of elements of {C}oxeter
  groups.
\newblock {\em Europ. J. Combinatorics 5\/} (1984), 359--372.

\bibitem{sage}
{\sc Stein, W., et~al.}
\newblock {\em {S}age {M}athematics {S}oftware ({V}ersion 4.5.2)}.
\newblock The Sage Development Team, {\tt http://www.sagemath.org}, 2010.

\bibitem{ste89}
{\sc Stembridge, J.}
\newblock {S}hifted tableaux and the projective representations of symmetric
  groups.
\newblock {\em Adv. in Math.\/} (1989), 87--134.

\end{thebibliography}
\end{document}